%
%
%
%
\documentclass[a4paper]{amsart}


\usepackage{cite}
\usepackage{amsmath}
\usepackage{amssymb}
\usepackage{amsthm}   
\usepackage{empheq}   
\usepackage{graphicx}
\usepackage{color}
\usepackage{appendix}
\usepackage{hyperref}
\usepackage[all]{xy}
\usepackage{mathrsfs}

\newtheorem{theorem}{Theorem}[section]
\newtheorem{lemma}[theorem]{Lemma}
\newtheorem{proposition}[theorem]{Proposition}
\newtheorem{corollary}[theorem]{Corollary}

\theoremstyle{definition}
\newtheorem{definition}[theorem]{Definition}
\newtheorem{example}[theorem]{Example}

\theoremstyle{remark}
\newtheorem{remark}[theorem]{Remark}

\numberwithin{equation}{section}
\def\EE{{\mathscr{E}}}
\def\FF{{\mathscr{F}}}
\def\CC{{\mathscr{C}}}
\def\bs{{\mathtt{s}}}

\def\ss{{\mathtt{s}}}
\def\SS{{\mathbf{S}}}
\def\ff{{\mathfrak{f}}}
\def\s{{\mathfrak{s}}}
\def\e{{\mathrm{e}}}
\def\t{{\mathrm{t}}}



\begin{document}

\title[Effective intervals and D-Subspaces]{Effective intervals 
and regular Dirichlet subspaces}

\author{ Liping Li}
\address{RCSDS, HCMS, Academy of Mathematics and Systems Science, Chinese Academy of Sciences, Beijing 100190, China.}
\email{liliping@amss.ac.cn}
\thanks{The first named author is partially supported by a joint grant (No. 2015LH0043) of China Postdoctoral Science Foundation and Chinese Academy of Science, China Postdoctoral Science Foundation (No. 2016M590145), NSFC (No. 11688101) and Key Laboratory of Random Complex Structures and Data Science, Academy of Mathematics and Systems Science, Chinese Academy of Sciences (No. 2008DP173182). The third named author is partially supported by NSFC No. 11271240.}

\author{Wenjie Sun}
\address{Shanghai Center for Mathematical Sciences, Fudan University, Shanghai 200433, China.}
\email{wjsun14@fudan.edu.cn}

\author{Jiangang Ying}
\address{School of Mathematical Sciences, Fudan University, Shanghai 200433, China.}
\email{jgying@fudan.edu.cn}

\subjclass[2010]{Primary 31C25; Secondary 60J60}


\keywords{Dirichlet forms, regular Dirichlet subspaces, one-dimensional symmetric diffusions, scale functions.}

\begin{abstract}
It is shown in \cite{LY17} that a regular and local Dirichlet form on an interval can be 
represented by so-called effective intervals with scale functions.
This paper focuses on how to operate on effective intervals to obtain regular Dirichlet subspaces.
The first result is a complete characterization for a Dirichlet form to be a regular subspace of such a Dirichlet form in terms of effective intervals. Then we give an explicit road map how to obtain all regular Dirichlet subspaces from a local and regular Dirichlet form on an interval, by a series of intuitive operations on the effective intervals in the representation above. Finally applying previous results, we shall prove that every regular and local Dirichlet form has a special standard core generated by a continuous and strictly increasing function.
\end{abstract}

\maketitle

\tableofcontents

\section{Introduction}
A Dirichlet form is a closed and symmetric bilinear form with Markovian property on $L^2(E, m)$ space, where $E$ is a nice topological space and $m$ is a fully supported Radon measure on $E$. Due to a series of important works by M. Fukushima, M. L. Silverstein in 1970's,  a regular Dirichlet form is always associated with a symmetric Markov process
uniquely by the transition semigroup, so we do not distinguish them for convenience. For example, when we say a subspace of a symmetric diffusion, it means a subspace of its associated Dirichlet form. We refer the notions and terminologies in the theory of Dirichlet forms to \cite{CF12, FOT11}.

The notion of regular Dirichlet subspace of a Dirichlet form was first raised by the third named author and his co-authors in \cite{FFY05}. The dual notion, regular Dirichlet extension, was raised in \cite{LY16} by the first and third named authors together. These two notions are about the inclusion relation between two Dirichlet spaces. Namely, let $E$ be a locally compact separable metric space and $m$ a fully supported Radon measure on $E$. Given two regular Dirichlet forms $(\EE^1,\FF^1)$ and $(\EE^2,\FF^2)$ on the same Hilbert space $L^2(E,m)$, if
\[\FF^1\subset\FF^2,\quad \EE^2(u,v)=\EE^1(u,v),\quad \forall ~ u,v\in \FF^1,\]
we say $(\EE^1,\FF^1)$ is a \emph{regular Dirichlet subspace} or simply a \emph{D-subspace} of $(\EE^2,\FF^2)$, and  conversely, $(\EE^2,\FF^2)$ is a \emph{regular Dirichlet extension} or simply \emph{D-extension} of $(\EE^1,\FF^1)$.

The Brownian motion is a classical and fundamental model in the theory of stochastic processes. It is well known that the Dirichlet form associated with 1-dim (an abbreviation for one-dimensional) Brownian motion is $(\frac{1}{2}\mathbf{D},H^1(\mathbb{R}))$, where $H^1(\mathbb{R})$ is the $1$-Sobolev space and $\mathbf{D}$ is the Dirichlet integral, i.e., for any $u,v\in H^1(\mathbb{R})$,
\[ \mathbf{D}(u,v)=\int_{\mathbb{R}}u'(x)v'(x)dx. \]
The D-subspaces and D-extensions of 1-dim Brownian motion have been studied in \cite{FFY05} and \cite{LY16} respectively. It is shown that any D-subspace of $(\frac{1}{2}\mathbf{D},H^1(\mathbb{R}))$ corresponds to an irreducible (or `regular') symmetric diffusion process on $\mathbb{R}$ in the sense that $\mathbf{P}_x(\sigma_y<\infty)>0$ for any $x,y\in \mathbb{R}$, where $\sigma_y$ is the hitting time of $\{y\}$ relative to this diffusion. Moreover, such a subspace may be characterized uniquely by a so-called scale function $\bs$ (Cf. \cite[V.46]{RW87}) satisfying that $\bs$ is absolutely continuous and
\begin{equation}\label{EQ1SAE}
	\bs'=0 \text{ or } 1 \text{ a.e.}
\end{equation}
However, the D-extension of  $(\frac{1}{2}\mathbf{D},H^1(\mathbb{R}))$ is not necessarily irreducible. In other words, it admits non-trivial invariant components. It is shown as the main result of \cite{LY16} that the state space $\mathbb{R}$ of each D-extension of $(\frac{1}{2}\mathbf{D},H^1(\mathbb{R}))$ may be essentially decomposed into at most countable invariant intervals and an exceptional set, and on each interval, it behaves as an irreducible diffusion characterized by some appropriate scale function. We refer further explorations about D-subspaces of some other Dirichlet forms to \cite{FHY10, LY15, LY16-2, LY17-2} and \cite{LY14}.

In this paper, we shall essentially focus on regular and strongly local Dirichlet forms. The state space is the real line $\mathbb{R}$ if not otherwise stated. The results may be generalized to regular and local Dirichlet forms on an interval without real difficulty as indicated in the last section.
Since the unique probabilistic counterpart of such a form is  
a 1-dim symmetric diffusion or a symmetric diffusion on $\mathbb{R}$, we often abuse these two notions for simplicity and intuition, if no confusion will be caused.

Inspired by the work on regular Dirichlet extensions of 1-dim Brownian motion, the representation of Dirichlet forms associated with 1-dim symmetric diffusions, including non-irreducible ones, was studied in \cite{LY17}.
The main result in \cite{LY17} will be reviewed in \S\ref{SEC2}.  Roughly speaking, in spite of the possible killing insides, such 1-dim symmetric diffusion lives on at most countable disjoint intervals, called effective intervals there, and every point outside these intervals (probably non-trivial) is a `trap' of the diffusion in the sense that all the trajectories starting from this point will never leave. On each effective interval, it is an irreducible diffusion characterized by an `adapted' scale function. Thus the associated Dirichlet form is described in unique way by a class of at most countable pairs $\{(I_n, \bs_n): n\geq 1\}$, where $I_n$ is the effective interval and $\bs_n$ is an `adapted' scale function on it.

The set of effective intervals with adapted scale functions is a probabilistic point of view to look at a Dirichlet form, which is purely an analytic object. It should be true that any property of such a Dirichlet form may be characterized intuitively by its effective intervals. The main purpose of this paper is to characterize the relation of regular Dirichlet subspace/extension through effective intervals and make this analytic notion more intuitive.

Three main results will be presented in this paper. The first result, stated as Theorem~\ref{thm2.1}, is a necessary and sufficient condition for one 1-dim symmetric diffusion to be a D-subspace of another in terms of their effective intervals.  
Inspired by the result for 1-dim Brownian motion where the scale function with \eqref{EQ1SAE} plays an essential role, we introduce a new conception, named the scale measure, which is the sum of all measures induced by scale functions on effective intervals.  
The condition \eqref{EQ3DLS} in Theorem~\ref{thm2.1} is similar to \eqref{EQ1SAE}.  
Particularly, \eqref{EQ3DLS} coincides with \eqref{EQ1SAE} when returning to 1-dim Brownian motion.
Nevertheless, Theorem~\ref{thm2.1} gives only a criterion that a Dirichlet form is a D-subspace of the other, and it is more interesting to know whether it is possible to obtain a D-subspace through some operation on the effective intervals. The second result is to answer this question positively and draw a concrete road map to illustrate how to do this.  
Roughly speaking, the condition \eqref{EQ3DLS} may be viewed as an operation which multiplies a factor to its scale measure and gives a new scale measure. However different D-subspaces may share the same scale measure. Therefore  
once identifying the scale measure, we need `interval-merge' operations to obtain all D-subspaces taking this scale measure,  
 which means that the original effective intervals are firstly grouped and then merged according to some rule into new ones. Several examples are also raised to illustrate these operations. The idea of `interval-merge' was originated in \cite[\S3.5]{LY17}, where it was used to identify the closure of $C_c^\infty(\mathbb{R})$ in a 1-dim symmetric diffusion. In some sense, the discussion in \cite[\S3]{LY17} could be treated as a special case of what we shall consider here. Intuitively speaking, 1-dim diffusion may be viewed as an electron wandering on an electric network.
The operation of scale-shrink is to reduce the resistance of network by placing super-conductance and the operation of interval-merge is to connect some broken networks, which can be merged, together. 
The third result, as an application of the second one, is to study a special class of D-subspaces generated by
\[
	\CC_\ff:=C_c^\infty\circ \ff=\{\varphi\circ \ff: \varphi\in C_c^\infty(\ff(\mathbb{R}))\},
\]
where $\ff$ is a continuous and tightly increasing function on $\mathbb{R}$. We find that the scale measure of this D-subspace is the absolutely continuous part of original scale measure with respect to $d\ff$, and the optional interval-merge to attain this D-subspace is performed on equivalence classes obtained by the so-called $\ff$-scale-connection in Definition~\ref{DEF53}. This can be used to prove an interesting and useful fact that every regular and strongly local Dirichlet form on $L^2(\mathbb{R},m)$ has a special standard core generated this way, just as Brownian motion has a special standard core consisting of smooth functions.

{As a dual conception, D-extensions enjoy the same characterization result as D-subspaces. Particularly, Theorem~\ref{thm2.1} also characterizes D-extensions of a 1-dim diffusion completely. We left further discussions about D-extensions in a future study.}


This paper is organized as follows. In \S\ref{SEC2}, we shall briefly review the representation theorem (Cf. \cite{LY17}) for regular and strongly local Dirichlet forms. In \S\ref{SEC3}, a complete characterization of a D-subspace (or D-extension) for a 1-dim symmetric diffusion is given in terms of effective intervals and scale measures.  
It turns out that if one regular and strongly local Dirichlet form is a D-subspace of another, then the effective intervals have to be `coarser', and the scale measure has to be reduced in the way presented by \eqref{EQ3DLS}.
The section \S\ref{SEC4} is devoted to draw a road map from the original Dirichlet form to its D-subspaces.  
We shall introduce two kinds of operations. One is called the `scale-shrink' operation, which essentially identifies the scale measure of a D-subspace. The other is called `optional interval-merge' operation, which groups and merges the effective intervals into new ones. Then in Theorem~\ref{THM419}, we shall illustrate that every D-subspace is obtained by firstly a scale-shrink operation and then an optional interval-merge operation. The section \S\ref{SEC5} is an application of this road map. It concerns the D-subspaces generated by a special class of functions. The principal theorem, i.e. Theorem~\ref{THM56}, presents the scale measures and optional interval-merge to attain these D-subspaces. Particularly, a corollary of this result also provides an effective method to find a `nice' special standard core of the Dirichlet form represented in Theorem~\ref{THM21}. Some interesting examples are raised to realize this method. We then prove in Theorem~\ref{T58SSC} that any regular and local Dirichlet form
has a special standard core of this form.
Finally, several further remarks are given in \S\ref{SEC6}. The first one deals with the case that the state space is just an interval. It makes no big difference, but special attentions are needed when we come to the boundaries of the interval. The second remark concerns the killing insides. By using the resurrected transform and killing transform, we can  easily deduce that the only additional condition is that the two Dirichlet forms share the same killing measure.  

\subsection*{Notations}
Let us put some often used notations here for handy reference, though we may restate their definitions when they appear.

For $a<b$, $\langle a, b\rangle$ is an interval where $a$ or $b$ may or may not be contained in $\langle a, b\rangle$.
The restrictions of a measure $\mu$ and a function $f$ to an interval $J$ are denoted by $\mu|_J$ and $f|_J$ respectively.
The notation `$:=$' is read as `to be defined as'.
For a scale function $\bs$ (i.e. a continuous and tightly increasing function) on some interval $J$, $d\bs$ represents its associated measure on $J$. Set $\bs(J):=\{\bs(x):x\in J\}$. For two measures $\mu$ and $\nu$, $\mu\ll \nu$ means $\mu$ is absolutely continuous with respect to $\nu$, and $\mu\simeq\nu$ means that they are mutually absolutely continuous (or simply equivalent). Given a scale function $\bs$ on $J$ and another function $f$ on $J$, $f\ll \bs$ means $f=g\circ \bs$ for an absolutely continuous function $g$ and $$\frac{df}{d\bs}:=g'\circ \bs.$$ The classes $C_c(J), C^1_c(J)$ and $C^\infty_c(J)$ denote the spaces of all continuous functions on $J$ with compact support, all continuously differentiable functions with compact support and all infinitely differentiable functions with compact support, respectively.

Fix a Markov process $X=(X_t)_{t\geq 0}$ associated with a Dirichlet form $(\EE,\FF)$ on $L^2(E,m)$. If $U$ is an open subset of $E$, then the part Dirichlet form of $(\EE,\FF)$ on $U$ is denoted by $(\EE_U,\FF_U)$ and the part process of $X$ on $U$ is denoted by $X_U$.
All the terminologies about Dirichlet forms are standard and we refer them to \cite{FOT11, CF12}.

\section{A review of 1-dim symmetric diffusions}\label{SEC2}

This section is devoted to a brief review of the representation of regular Dirichlet forms associated with 1-dim symmetric diffusions. Intuitively speaking, a 1-dim symmetric diffusion lives on at most countable disjoint intervals, on each interval it is a `regular' diffusion (for regularity of a 1-dim diffusion, see \cite[\S45]{RW87}) and outside these intervals the diffusion will never move.  These are presented in \cite{LY17} and for readers' convenience, we summarize the main results as follows.

Let $\mathbb{R}$ be the real line, and $m$ a fully supported Radon measure on $\mathbb{R}$. Further let $J:=\langle a, b\rangle$ be an interval, where $a$ or $b$ may or may not be contained in $J$. Note that a `regular' 1-dim diffusion on $J$  is characterized completely by a scale function (uniquely up to a constant), a speed measure and a killing measure.  Take a fixed point in the interior of $J$ as follows
\begin{equation}\label{midpoint}
e:=\left\{
\begin{array}{ll}
\dfrac{a+b}{2}, & |a|+|b|<\infty, \\
a+1, & a>-\infty, b=\infty,\\
b-1, & a=-\infty, b<\infty,\\
0, & a=-\infty, b=\infty.
\end{array}\right.
\end{equation}
Then the family of scale functions on $J$ is given by
\[
\mathbf{S}(J):= \{ \mathtt{s}: J\rightarrow \mathbb{R}: \bs \ \mbox{is strictly increasing and continuous},\ \bs(e)=0
\},
\]
where we impose $\bs(e)=0$ to guarantee the uniqueness of scale function for regular 1-dim diffusion.
Since $\mathtt{s}(x)$ is monotone, we set
\[
\mathtt{s}(a):=\lim_{x\downarrow a}\bs(x),\quad \mathtt{s}(b):= \lim_{x\uparrow b}\bs(x).
	\]
In \cite{LY17}, a scale function $\bs$ is required to be adapted to the interval $J$ in the sense that
\begin{itemize}
\item[$(\textbf{A}_R)$] $a+\bs(a)>-\infty$ if and only if $a\in J$;
\item[$(\textbf{B}_R)$] $b+\bs(b)<\infty$ if and only if $b\in J$.
\end{itemize}
Thus we define a sub-family of scale functions as
$$\mathbf{S}_{\infty}(J):=\{\mathtt{s}\in \mathbf{S}(J): \mathtt{s}\text{ satisfies }(\textbf{A}_R) \text{ and }(\textbf{B}_R)\}.$$
In other words, an open endpoint is unapproachable and a closed endpoint is reflecting for the diffusion on $J$ with a scale function in $\mathbf{S}_\infty(J)$.
On the other hand, the possible absorbing property at two infinities is also needed to be considered:
\begin{itemize}
\item[($\text{L}_R$)] $a=-\infty$, $\bs(-\infty)>-\infty$ and $m((-\infty, 0])<\infty$;
\item[($\text{R}_R$)] $b=\infty$, $\bs(\infty)<\infty$ and $m([0,\infty))<\infty$.
\end{itemize}
For a function $\bs\in \mathbf{S}_\infty(J)$, the Dirichlet form on $L^2(J,m|_J)$ defined by
\begin{equation}\label{EQ2FSU2}
\begin{aligned}
&\FF^{(\bs)}:=\bigg\{u\in L^2(J,m|_J): u\ll \bs, \; \frac{du}{d\bs}\in L^2(J,d\bs); \\
 &\qquad \qquad\qquad  u(a)=0\text{ (resp. }u(b)=0\text{) whenever (}\text{L}_R\text{) (resp. (}\text{R}_R\text{))} \bigg\},  \\
& \EE^{(\bs)}(u,v):=\frac{1}{2}\int_J \frac{du}{d\bs}\frac{dv}{d\bs}d\bs,\quad u,v\in \FF^{(\bs)}
\end{aligned}
\end{equation}
is regular and associated with $m|_J$-symmetric `regular' diffusion on $J$ with scale function $\bs$ (see \cite{FHY10}).

The following theorem is taken from \cite[Corollary~2.13 and Theorem~4.1]{LY17}, which presents a complete representation of regular and strongly local Dirichlet forms on $L^2(\mathbb{R},m)$. Note that the strong local property of Dirichlet form implies that the associated Markov process is continuous, and has no killing inside.

\begin{theorem}\label{THM21}
Let $m$ be a fully supported Radon measure on $\mathbb{R}$. Then $(\EE, \FF)$ is a regular and strongly local Dirichlet form on $L^2(\mathbb{R},m)$ if and only if there exist a set of at most countable disjoint intervals $\{I_n=\langle a_n,b_n\rangle: I_n\subset \mathbb{R}, n\geq 1\}$ with a scale function $\ss_n\in \SS_\infty(I_n)$ for each $n\geq 1$  
such that
\begin{equation}\label{EQ2FULR}
\begin{aligned}
	&\FF=\left\{u\in L^2(\mathbb{R}, m): u|_{I_n}\in \FF^{(\ss_n)}, \sum_{n\geq 1}\EE^{(\ss_n)}(u|_{I_n}, u|_{I_n})<\infty  \right\},  \\
	&\EE(u,v)=\sum_{n\geq 1}\EE^{(\ss_n)}(u|_{I_n}, v|_{I_n}),\quad u,v \in \FF,
\end{aligned}
\end{equation}
where for each $n\geq 1$, $(\EE^{(\ss_n)}, \FF^{(\ss_n)})$ is given by \eqref{EQ2FSU2} with the scale function $\ss_n$ on $I_n$. Moreover, the intervals $\{I_n: n\geq 1\}$ and scale functions $\{\ss_n:n\geq 1\}$ are uniquely determined, if the difference of order is ignored.
\end{theorem}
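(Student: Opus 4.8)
The plan is to organize the proof around the irreducible decomposition of a Dirichlet form together with the classical theory of one-dimensional diffusions, handling sufficiency, necessity and uniqueness in turn. For \emph{sufficiency}, suppose a family $\{(I_n,\ss_n)\}$ as in the statement is given. I would first recall from \cite{FHY10} that each $(\EE^{(\ss_n)},\FF^{(\ss_n)})$ of \eqref{EQ2FSU2} is itself a regular, strongly local Dirichlet form on $L^2(I_n,m|_{I_n})$, associated with an irreducible diffusion on $I_n$. Writing $N:=\mathbb{R}\setminus\bigcup_n I_n$, one checks that $L^2(\mathbb{R},m)$ splits orthogonally as $\bigoplus_n L^2(I_n,m|_{I_n})\oplus L^2(N,m|_N)$ and that, under this splitting, $(\FF,\EE_1)$ of \eqref{EQ2FULR} is exactly the Hilbert-space direct sum $\bigoplus_n(\FF^{(\ss_n)},\EE_1^{(\ss_n)})\oplus\big(L^2(N,m|_N),\|\cdot\|_{L^2}^2\big)$; closedness is then automatic (a direct sum of complete spaces is complete), and the Markovian property and strong locality pass componentwise since $\EE$ is by construction the diagonal sum over the pairwise disjoint $I_n$. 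The one delicate point is \emph{regularity on the topological space $\mathbb{R}$}: one must exhibit a core inside $C_c(\mathbb{R})\cap\FF$ dense both uniformly and in the $\EE_1$-norm, obtained by gluing cores of the individual $\FF^{(\ss_n)}$ continuously across the endpoints of the $I_n$ (tapering to $0$ through $N$, or matching against an unapproachable boundary of an adjacent interval). This continuous gluing is exactly what the adaptedness hypotheses $(\mathbf{A}_R),(\mathbf{B}_R)$ — a closed endpoint reflecting and at finite scale, an open endpoint at infinite scale — together with $(\mathrm{L}_R),(\mathrm{R}_R)$ are designed to make possible.

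For \emph{necessity}, I would pass to the Hunt process $X$ associated with $(\EE,\FF)$; strong locality makes $X$ a diffusion with continuous sample paths and no killing inside. I would then invoke the irreducible decomposition of a Dirichlet form (cf.\ \cite{FOT11,CF12}): up to an exceptional set, $\mathbb{R}$ is partitioned into a family of $X$-invariant sets, the ``active'' ones carrying irreducible restrictions of $X$ and the remainder $N$ consisting of trap points. Because $X$ has continuous paths on the line, no active invariant set can have two pieces separated by part of $N$, so each active invariant set is a version of an interval $I_n=\langle a_n,b_n\rangle$; there are at most countably many of them and $\{I_n\}$ together with $N$ exhausts $\mathbb{R}$. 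On each $I_n$ the restricted process is an $m|_{I_n}$-symmetric irreducible diffusion on $I_n$, so by the classical scale-function/speed-measure representation of such diffusions (cf.\ \cite{FHY10}) its Dirichlet form on $L^2(I_n,m|_{I_n})$ equals $(\EE^{(\ss_n)},\FF^{(\ss_n)})$ for a unique $\ss_n\in\SS(I_n)$ normalized by $\ss_n(e_n)=0$, and Feller's boundary classification, applied at $a_n,b_n$ and at an infinite endpoint, yields $(\mathbf{A}_R),(\mathbf{B}_R)$ and the side conditions $(\mathrm{L}_R),(\mathrm{R}_R)$. Finally, since $N$ carries no energy and distinct $I_n$ do not communicate, the decomposition forces $\FF$ and $\EE$ to be given by \eqref{EQ2FULR}.

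For \emph{uniqueness}, I would show that every ingredient is intrinsic to $(\EE,\FF)$. The trap set $N$, hence the family $\{I_n\}$ of its complementary components, is determined by $(\EE,\FF)$ alone — for instance via the $\EE_1$-capacities of one-point sets, or the hitting probabilities $\mathbf{P}_x(\sigma_y<\infty)$, or the intrinsic metric of the form — so $\{I_n\}$ is unique up to relabelling. On a fixed $I_n$ the component of $(\EE,\FF)$ supported there is determined, and since replacing $\ss_n$ by $c\,\ss_n$ multiplies $\EE^{(\ss_n)}$ by $1/c$ (the measure $d\ss_n$ enters once as the integrator and once through $du/d\ss_n$), the form already pins $\ss_n$ down up to an additive constant, which the normalization $\ss_n(e_n)=0$ removes.

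The step I expect to be the main obstacle is the passage, in the necessity direction, from the probabilistic decomposition into invariant sets to the analytic model \eqref{EQ2FSU2} on each component: verifying that the invariant pieces really are intervals supporting genuine irreducible diffusions, and carrying out the boundary bookkeeping at the $a_n,b_n$ and at $\pm\infty$ carefully enough that each scale function lands in $\SS_\infty(I_n)$ with the correct side conditions $(\mathrm{L}_R),(\mathrm{R}_R)$. The continuous gluing of cores needed for regularity in the sufficiency direction is essentially the same difficulty seen from the opposite side.
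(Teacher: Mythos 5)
The paper does not actually prove this theorem: it is imported from \cite[Corollary~2.13 and Theorem~4.1]{LY17}, so there is no in-paper argument to measure your proposal against. Your overall strategy --- decompose the state space into invariant intervals plus traps, apply the classical scale-function representation on each piece, and glue cores to get regularity --- is the natural one and is presumably close in spirit to what \cite{LY17} does.

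As a proof, however, the proposal has genuine gaps, located exactly where you flag the difficulty. In the necessity direction you ``invoke the irreducible decomposition of a Dirichlet form (cf.\ \cite{FOT11,CF12})'', but no such theorem exists in that generality: a non-irreducible Dirichlet form need not split into countably many irreducible invariant components plus a trap set, and invariant sets are defined only up to exceptional sets, so the claims that each active component is (a version of) an interval and that the restricted process is a regular diffusion in the It\^o--McKean sense are precisely the content of the theorem, not something citable. The intervals $I_n$ have to be constructed by hand (e.g.\ as maximal intervals on which single points have positive capacity and positive mutual hitting probability, or via the support of the energy measure), and their invariance and irreducibility verified using one-dimensionality and strong locality. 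In the sufficiency direction, regularity is not ``the same difficulty seen from the opposite side'': the dangerous configuration is a closed endpoint of one effective interval accumulated by another, say $I_1=[-1,0)$ with $\ss_1(0-)=+\infty$ and $I_2=[0,1]$ with $0$ reflecting. A generic $u\in\FF$ has no left limit at $0$, while every core element must be continuous there, so one must show that $u$ can be approximated in $\EE_1$ by functions flattened near $0-$; this uses $(\mathbf{A}_R)$, $(\mathbf{B}_R)$ quantitatively (the infinite scale at the open endpoint makes the truncation cost small), not merely as a condition ``designed to make gluing possible''. Finally, your uniqueness argument only excludes replacing $\ss_n$ by $c\,\ss_n$; to see that $d\ss_n$ itself is determined you should note that the energy measure of $(\EE,\FF)$ recovers $d\ss_n$ on $\mathring{I}_n$ (for $u$ locally coinciding with $\ss_n$ one has $\mu_{\langle u\rangle}=d\ss_n$ there), after which the normalization $\ss_n(e_n)=0$ fixes $\ss_n$.
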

\begin{remark}\label{RM22}
This representation theorem is valid a bit more generally.  
We refer the general version of Theorem~\ref{THM21} to \cite[Theorem~4.1]{LY17}. Note that the consideration on $\mathbb{R}$ do not lose generality and the reason is presented in \cite[\S2.4]{LY17}.
Our results in general case will be briefly stated in \S\ref{SEC6}.
\end{remark}

Let us give more explanations for the theorem above.  We denote the associated diffusion process of $(\EE,\FF)$ by $(X_t, \mathbf{P}_x)$.  
The interval $I_n$ is an invariant set of $(X_t)_{t\geq 0}$ in the sense that
\[
	\mathbf{P}_x(X_t\in I_n, \forall t\geq 0)=1,\quad x\in I_n.
\]
The restriction $X^{I_n}$ of $X$ to $I_n$ is an $m|_{I_n}$-symmetric diffusion enjoying irreducibility:
\[
	\mathbf{P}_x(\sigma_y<\infty)>0,\quad x,y\in I_n,
\]
where $\sigma_y$ is the first hitting time of $\{y\}$ relative to $(X_t)_{t\geq 0}$.
The scale function of $X^{I_n}$ is actually $\bs_n$. Note that the scale function $\bs_n$ is adapted to $I_n$ in the sense of $(\textbf{A}_R)$ and $(\textbf{B}_R)$, and this adapted condition is necessary for the regularity of $(\EE,\FF)$. Intuitively, this condition indicates that any finite endpoint of $I_n$ cannot be absorbing. Particularly, when $I_n$ is finite, $X^{I_n}$ must be recurrent, i.e. $$\mathbf{P}_x(\sigma_y<\infty)=1$$ for any $x,y\in I_n$.  Furthermore, every point outside these intervals is a trap of $(X_t)_{t\geq 0}$, that is
\[
	\mathbf{P}_x(X_t=x,\forall t\geq 0)=1,\quad x\in \left(\bigcup_{n\geq 1}I_n\right)^c.
\]
Therefore, the Dirichlet form $(\EE,\FF)$ in Theorem~\ref{THM21} is characterized by a set $\{(I_n, \bs_n): n\geq 1\}$ enjoying the following properties:
 \begin{itemize}
\item[(E1)] $\{I_n: n\geq 1\}$  are mutually disjoint.
\item[(E2)] For each $n$, $\bs_n$ is adapted to $I_n$, i.e. $\bs_n\in \SS_\infty(I_n)$.
\end{itemize}

Let us now give a definition.

\begin{definition}
A sequence of intervals $\{(I_n,\bs_n): n\geq 1\}$, with a scale function $\bs_n$ on $I_n$ for each $n$, is called \emph{(a class of) pre-effective intervals} if (E1) is satisfied, and \emph{(a class of) effective intervals}, if both (E1) and (E2) are satisfied.
\end{definition}

Therefore we could say that a regular and strongly local Dirichlet form $(\EE,\FF)$ is represented by a class of effective intervals.
We also call the  interval $I_n$ with an (adapted) scale function $\bs_n$ on $I_n$ or the pair $(I_n, \bs_n)$ a \emph{(pre-)effective interval} of $(\EE,\FF)$, if no confusions caused.

\section{Characterization of D-subspaces}\label{SEC3}

Let $(\EE,\FF)$ and $(\mathfrak{E},\mathfrak{F})$ be regular and strongly local Dirichlet forms on $L^2(\mathbb{R},m)$ with effective intervals $\{(I_n, \bs_n):n\geq 1\}$ and  
$\{(\mathtt{I}_k, \mathfrak{s}_k):k\geq 1\}$, respectively. The main purpose of this section is to present a necessary and sufficient condition on effective intervals  
for $(\mathfrak{E},\mathfrak{F})$ to be a D-subspace of $(\EE,\FF)$ (in other words, $(\EE,\FF)$ is a D-extension of $(\mathfrak{E},\mathfrak{F})$).

Referring to \cite{FFY05} and \cite{FHY10}, when $(\EE,\FF)$ and $(\mathfrak{E}, \mathfrak{F})$ have only one effective interval $\mathbb{R}$, i.e. $I_1=\mathtt{I}_1=\mathbb{R}$, $(\mathfrak{E}, \mathfrak{F})$ is a D-subspace of $(\EE,\FF)$ if and only if $\mathfrak{s}_1$ is absolutely continuous with respect to $\bs_1$ and
\[
\frac{d\mathfrak{s}_1}{d\bs_1}=0\text{ or }1,\quad d\bs_1\text{-a.e.}
\]	
Clearly, when $\{x: d\mathfrak{s}_1/d\bs_1=0\}$ is of positive $d\bs_1$-measure, this D-subspace is proper. On the other hand, when $C_c^\infty(\mathbb{R})$ is a special standard core of $(\mathfrak{E},\mathfrak{F})$, this issue was explored in \cite[\S3]{LY17}, in which the idea of `interval-merge' operation was introduced.

In general, define
\[
\lambda_{\bs}:=\sum_{n\geq 1}d\bs_n,
\]
which is called  the \emph{scale measure} associated to effective intervals $\{(I_n, \bs_n): n\geq 1\}$. We shall write
\[
\EE(f,g)=\frac{1}{2}\int_\mathbb{R}\frac{df}{d\lambda_\ss}\frac{dg}{d\lambda_\ss}d\lambda_\ss
\]
for any $f,g\in \FF$, if no confusion caused.
 Note that each $d\bs_n$ is a Radon measure on $I_n$ and thus $\lambda_\bs$ is a $\sigma$-finite measure on $\mathbb{R}$ supported on the closure of $\bigcup_{n\geq 1}I_n$. Similarly, the scale measure associated to $\{(\mathtt{I}_k, \mathfrak{s}_k): k\geq 1\}$ is denoted by $\lambda_{\mathfrak{s}}$.
The following theorem could be treated as an extension of all results mentioned above. Note that any D-subspace of $(\EE,\FF)$ is also strongly local and characterized by another class of effective intervals. Thus this theorem is also a complete characterization of D-subspaces or D-extensions for a 1-dim symmetric diffusion.

\begin{theorem}\label{thm2.1}
Let $(\mathscr{E},\mathscr{F})$ and $(\mathfrak{E},\mathfrak{F})$ be two regular and strongly local Dirichlet forms on $L^2(\mathbb{R};m)$, with effective intervals  $\{(I_n, \bs_n): n\geq 1\}$ and $\{(\mathtt{I}_k, \mathfrak{s}_k): k\geq 1\}$ respectively. Further let $\lambda_{\bs}$ and $\lambda_\mathfrak{s}$ be the scale measures associated to $\{(I_n, \bs_n): n\geq 1\}$ and $\{(\mathtt{I}_k, \mathfrak{s}_k): k\geq 1\}$ respectively. Then $(\mathfrak{E},\mathfrak{F})$ is a D-subspace of $(\mathscr{E},\mathscr{F})$ on $L^2(\mathbb{R},m)$ if and only if the following conditions hold:
\begin{itemize}
\item[(1)] $\{\mathtt{I}_k: k\geq 1\}$ is coarser than $\{I_n: n\geq 1\}$ in the sense that for any $n$, $I_n\subset \mathtt{I}_k$ for some $k$.
\item[(2)] $\lambda_\mathfrak{s}\ll \lambda_\bs$ and
\begin{equation}\label{EQ3DLS}
	\frac{d\lambda_\mathfrak{s}}{d\lambda_\bs}=0\text{ or }1,\quad \lambda_\bs\text{-a.e.}
\end{equation}
\end{itemize}
\end{theorem}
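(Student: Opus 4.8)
Throughout I will use that, by the representation \eqref{EQ2FULR} and the formula for the energy measure of the one‑interval form \eqref{EQ2FSU2}, the energy measure of $(\EE,\FF)$ at $u$ equals $\sum_n 1_{I_n}(\tfrac{du}{d\bs_n})^2\,d\bs_n$ and that of $(\mathfrak{E},\mathfrak{F})$ at $u$ equals $\sum_k 1_{\mathtt{I}_k}(\tfrac{du}{d\mathfrak{s}_k})^2\,d\mathfrak{s}_k$, so an equality of energy measures turns into pointwise relations between the $\bs_n$ and the $\mathfrak{s}_k$. The plan is to prove the two implications separately, the common engine being that for a D‑subspace these energy measures agree on the smaller domain. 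Two elementary analytic facts are used repeatedly: the composition $\varphi\circ\psi$ of an absolutely continuous $\varphi$ with a \emph{monotone} absolutely continuous $\psi$ is absolutely continuous with $(\varphi\circ\psi)'=(\varphi'\circ\psi)\psi'$ a.e. (a standard consequence of the Banach--Zaretsky criterion, $\varphi\circ\psi$ being continuous, of bounded variation, and enjoying Luzin's property (N)), and the substitution rule $\int_{\psi(a)}^{\psi(b)}g\,dt=\int_a^b(g\circ\psi)\psi'\,dt$ for monotone absolutely continuous $\psi$; together they give the chain rule $\tfrac{du}{d\bs_n}=\tfrac{du}{d\mathfrak{s}_k}\cdot\tfrac{d\mathfrak{s}_k}{d\bs_n}$ whenever the relevant absolute continuities hold.

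For sufficiency I would assume (1) and (2) and write $k(n)$ for the unique index with $I_n\subset\mathtt{I}_{k(n)}$. Disjointness within each family forces $\lambda_\bs|_{\mathtt{I}_k}=\sum_{n:k(n)=k}d\bs_n$ and $\lambda_\mathfrak{s}|_{\mathtt{I}_k}=d\mathfrak{s}_k$, so \eqref{EQ3DLS} says that each $d\mathfrak{s}_k$ is concentrated on $\bigcup_{n:k(n)=k}I_n$ and $d\mathfrak{s}_{k(n)}|_{I_n}=h_n\,d\bs_n$ with $h_n$ equal to $0$ or $1$ $d\bs_n$‑a.e. For $u\in\mathfrak{F}$ one has $u|_{I_n}\ll\mathfrak{s}_{k(n)}$ and $\mathfrak{s}_{k(n)}|_{I_n}\ll\bs_n$, hence $u|_{I_n}\ll\bs_n$ with $\tfrac{du}{d\bs_n}=h_n\tfrac{du}{d\mathfrak{s}_{k(n)}}$; since $h_n^2=h_n$, this gives $\EE^{(\bs_n)}(u|_{I_n})=\tfrac12\int_{I_n}(\tfrac{du}{d\mathfrak{s}_{k(n)}})^2\,d\mathfrak{s}_{k(n)}$, and summing over $n$ and regrouping by $k$, using the concentration of $d\mathfrak{s}_k$, yields $\sum_n\EE^{(\bs_n)}(u|_{I_n})=\sum_k\mathfrak{E}^{(\mathfrak{s}_k)}(u|_{\mathtt{I}_k})=\mathfrak{E}(u,u)<\infty$. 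Thus $u\in\FF$ and $\EE=\mathfrak{E}$ on $\mathfrak{F}$; the one remaining point is that the vanishing at $\pm\infty$ imposed in $\FF^{(\bs_n)}$ is inherited from $\mathfrak{F}$, which holds because $m$ is common to both forms and $d\mathfrak{s}_{k(n)}\ll d\bs_n$ keeps $\mathfrak{s}_{k(n)}(\pm\infty)$ finite whenever $\bs_n(\pm\infty)$ is.

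For necessity I would first prove that the two energy measures coincide on $\mathfrak{F}$: from the derivation identity $\int w\,d\mu_{\langle u\rangle}=2\EE(uw,u)-\EE(u^2,w)$ (valid for $u\in\mathfrak{F}\cap L^\infty$, $w\in\mathfrak{F}\cap C_c(\mathbb{R})$), the fact that $\mathfrak{F}\cap L^\infty$ is an algebra on which $\EE=\mathfrak{E}$, and the regularity of $(\mathfrak{E},\mathfrak{F})$ (so $\mathfrak{F}\cap C_c$ is sup‑norm dense in $C_c$), one gets $\sum_k 1_{\mathtt{I}_k}(\tfrac{du}{d\mathfrak{s}_k})^2\,d\mathfrak{s}_k=\sum_n 1_{I_n}(\tfrac{du}{d\bs_n})^2\,d\bs_n$ for every $u\in\mathfrak{F}$. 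For (1): if an open interval $V\subset I_n^\circ$ avoided $\bigcup_k\mathtt{I}_k$, then by \eqref{EQ2FULR} every $L^2(V,m)$‑function extended by $0$ would lie in $\mathfrak{F}\subset\FF$, contradicting the continuity of $\FF$‑functions on $I_n^\circ$ forced by $u|_{I_n}\ll\bs_n$; hence $\bigcup_k\mathtt{I}_k$ is dense in each $I_n^\circ$. Refining the same idea---the space $\mathfrak{F}$, unlike $\FF$, contains functions that oscillate near, or jump across, any point where an adapted $\mathfrak{s}_k$ blows up, i.e.\ at the non‑reflecting endpoints of the $\mathtt{I}_k$---one rules out a non‑trivial trap of $(\mathfrak{E},\mathfrak{F})$ inside $I_n^\circ$ and $I_n$ meeting two distinct $\mathtt{I}_k$, so $I_n\subset\mathtt{I}_{k(n)}$ for a unique $k(n)$. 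Granting (1), restrict the energy identity to $I_n^\circ$ and test against compactly supported tent functions $u\in\mathfrak{F}$ with $\tfrac{du}{d\mathfrak{s}_{k(n)}}\equiv1$ on subintervals: this gives $d\mathfrak{s}_{k(n)}|_{I_n^\circ}\ll d\bs_n$, say with density $g_n$, and substituting the chain rule $\tfrac{du}{d\bs_n}=g_n\tfrac{du}{d\mathfrak{s}_{k(n)}}$ back into the identity forces $g_n^2=g_n$ wherever $\tfrac{du}{d\mathfrak{s}_{k(n)}}\ne0$, hence $g_n\in\{0,1\}$ $d\bs_n$‑a.e.; the same tents also give $d\mathfrak{s}_k(\mathtt{I}_k\setminus\bigcup_{n:k(n)=k}I_n)=0$. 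Collecting these over all $n$ and $k$ yields $\lambda_\mathfrak{s}\ll\lambda_\bs$ with $\tfrac{d\lambda_\mathfrak{s}}{d\lambda_\bs}\in\{0,1\}$ $\lambda_\bs$‑a.e., which is (2). The one‑interval case recorded just before the statement is both a sanity check and the prototype for this local analysis.

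The step I expect to be the main obstacle is the part of (1) beyond the density statement: ruling out that some $I_n$ protrudes into the trap set of $(\mathfrak{E},\mathfrak{F})$ or spans two of the $\mathtt{I}_k$ requires producing, in each such configuration, an explicit function in $\mathfrak{F}\setminus\FF$, and doing this demands a careful description of how an adapted scale function behaves at an endpoint of an effective interval---whether it stays finite (a reflecting endpoint) or blows up (an inaccessible one)---and how this interacts with the $L^2(m)$‑integrability and finite‑energy requirements defining $\mathfrak{F}$. The change‑of‑variables lemmas are routine but must always be invoked with the monotonicity hypothesis on the inner function, since a composition of two absolutely continuous functions need not be absolutely continuous, nor even of bounded variation.
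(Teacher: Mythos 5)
Your sufficiency argument is correct and is essentially the paper's: restrict $u\in\mathfrak{F}$ to each $I_n$, use $d\mathfrak{s}_{k(n)}|_{I_n}=h_n\,d\bs_n$ with $h_n^2=h_n$ to equate the energies, and regroup using the fact that $d\mathfrak{s}_k$ charges nothing outside $\bigcup_n I_n$. The necessity direction, however, has a genuine gap exactly where you flag it: establishing condition (1). Your indicator-function argument does show that $\bigcup_k\mathtt{I}_k$ must be dense in each $\mathring{I}_n$, but that is far from $I_n\subset\mathtt{I}_{k}$ for a single $k$ --- a priori $\mathring{I}_n$ could be densely covered by infinitely many $\mathtt{I}_k$'s separated by a Cantor-type trap set of $(\mathfrak{E},\mathfrak{F})$, and ``refining the same idea'' by producing oscillating or jumping functions in $\mathfrak{F}\setminus\FF$ is precisely the construction you never carry out. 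The paper closes this with a different and cleaner tool: pass to the part Dirichlet forms $(\EE_{\mathring{I}_n},\FF_{\mathring{I}_n})$ and $(\mathfrak{E}_{\mathring{I}_n},\mathfrak{F}_{\mathring{I}_n})$ on the open interval $\mathring{I}_n$; the former is irreducible, the latter is a D-subspace of it, and a D-subspace of an irreducible Dirichlet form must itself be irreducible (\cite[Proposition~2.3~(3)]{LY16}). Any endpoint of some $\mathtt{I}_k$ falling in $\mathring{I}_n$, or any nontrivial trap region of $(\mathfrak{E},\mathfrak{F})$ inside $\mathring{I}_n$, would make the part of $(\mathfrak{E},\mathfrak{F})$ reducible --- contradiction. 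This single step is the crux of the proof and is absent from your proposal.

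Two smaller points. First, even after $\mathring{I}_n\subset\mathtt{I}_k$ is secured, one must still show the \emph{closed} endpoints of $I_n$ belong to $\mathtt{I}_k$; the paper does this by playing the adaptedness conditions $(\textbf{A}_R)$, $(\textbf{B}_R)$ off against $d\mathfrak{s}_k\ll d\bs_n$ with density at most $1$ (if $a_n\in I_n\setminus\mathtt{I}_k$ then $\bs_n(a_n)>-\infty$ while $\mathfrak{s}_k(\mathfrak{a}_k)=-\infty$, which is impossible). Your sketch skips this. Second, once (1) and the part-form picture are in place, your energy-measure/tent-function derivation of (2) on each $\mathring{I}_n$ would work, but it amounts to re-proving the one-interval result of \cite{FFY05} (\cite[Lemma~3.1]{LY17}) that the paper simply invokes; the only genuinely new piece of (2) is $d\mathfrak{s}_k\bigl(\mathtt{I}_k\setminus\bigcup_n I_n\bigr)=0$, which both you and the paper obtain by comparing the two energies of functions supported in $\mathtt{I}_k$.
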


\begin{proof}
For the sufficiency, we need only to prove
\[
\mathfrak{F}\subset \FF,\quad \EE(u,u)=\mathfrak{E}(u,u),\quad u\in \mathfrak{F}.
\]
Take a function $u\in \mathfrak{F}$. For any $n$, consider the restriction of $u$ to $I_n$, and denote it also by $u$ if no confusion caused. Let $\mathtt{I}_k$ be the interval in the first condition with $I_n\subset \mathtt{I}_k$. The second condition implies that $d\mathfrak{s}_k\ll d\bs_n$ and $d\mathfrak{s}_k/d\bs_n=0$ or $1$, $d\bs_n$-a.e. on $I_n$. Since $u\ll \mathfrak{s}_k$ on $\mathtt{I}_k$, it follows that  $u \ll \bs_n$ on $I_n$ and
\begin{equation}\label{EQ3IND}
 \int_{I_n}\left( \frac{du}{d\bs_n}\right)^2 d\bs_n= \int_{I_n}\left( \frac{du}{d\mathfrak{s}_k}\right)^2 \left( \frac{d\mathfrak{s}_k}{d\bs_n}\right)^2 d\bs_n =\int_{I_n}\left( \frac{du}{d\mathfrak{s}_k}\right)^2 d\mathfrak{s}_k.
\end{equation}
Note that $d\mathfrak{s}_k(\mathtt{I}_k\setminus \bigcup_{n\geq 1} I_n)=0$ since $\lambda_{\mathfrak{s}}\ll \lambda_\bs$. Then we have
\[
\sum_{I_n\subset \mathtt{I}_k}\int_{I_n}\left( \frac{du}{d\bs_n}\right)^2 d\bs_n=\int_{\mathtt{I}_k}\left( \frac{du|_{\mathtt{I}_k}}{d\mathfrak{s}_k}\right)^2 d\mathfrak{s}_k.
\]
Hence
\[
\sum_{n\geq 1}\int_{I_n}\left( \frac{du|_{I_n}}{d\bs_n}\right)^2 d\bs_n=\sum_{k\geq 1}\int_{\mathtt{I}_k}\left( \frac{du|_{\mathtt{I}_k}}{d\mathfrak{s}_k}\right)^2 d\mathfrak{s}_k.
\]
Therefore, we conclude $u\in \FF$ and $\EE(u,u)=\mathfrak{E}(u,u)$.

Next, we prove the necessity.  We first assert that any endpoint of each interval $\mathtt{I}_k$ cannot lie in the interior of any $I_n$. Indeed, let $\mathtt{I}_k=\langle \mathfrak{a}_k, \mathfrak{b}_k\rangle$ and suppose $\mathfrak{a}_k\in (a_n,b_n)$, where $I_n=\langle a_n, b_n \rangle$.
Consider the part Dirichlet forms $(\EE_{\mathring{I}_n}, \FF_{\mathring{I}_n})$ and $(\mathfrak{E}_{\mathring{I}_n}, \mathfrak{F}_{\mathring{I}_n})$ of $(\EE, \FF)$ and $(\mathfrak{E}, \mathfrak{F})$ on $\mathring{I}_n=(a_n, b_n)$. Clearly, $(\mathfrak{E}_{\mathring{I}_n}, \mathfrak{F}_{\mathring{I}_n})$ is a D-subspace of $(\EE_{\mathring{I}_n}, \FF_{\mathring{I}_n})$. However, $(\EE_{\mathring{I}_n}, \FF_{\mathring{I}_n})$ is irreducible while $(\mathfrak{E}_{\mathring{I}_n}, \mathfrak{F}_{\mathring{I}_n})$ is not irreducible. This contradicts \cite[Proposition~2.3~(3)]{LY16}. This assertion leads to $\mathring{I}_n\subset \mathtt{I}_k$ or $\mathring{I}_n\cap \mathtt{I}_k=\emptyset$ for any $k\geq 1$.  Moreover, $\mathring{I}_n\cap \left(\bigcup_{k\geq 1}\mathtt{I}_k\right)=\emptyset$ also contradicts the fact that $(\mathfrak{E}_{\mathring{I}_n}, \mathfrak{F}_{\mathring{I}_n})$ is a D-subspace of $(\EE_{\mathring{I}_n}, \FF_{\mathring{I}_n})$. Therefore, we conclude that $\mathring{I}_n\subset \mathtt{I}_k$ for some $k$. Note that $(\EE_{\mathring{I}_n}, \FF_{\mathring{I}_n})$ and $(\mathfrak{E}_{\mathring{I}_n}, \mathfrak{F}_{\mathring{I}_n})$ are both irreducible on $L^2({\mathring{I}_n},m|_{\mathring{I}_n})$ and their scale functions are $\bs_n$ and $\mathfrak{s}_k$ (restricted to ${\mathring{I}_n}$) respectively. Thus it follows from \cite{FFY05} (or \cite[Lemma~3.1]{LY17}) that
\begin{equation}\label{EQ3SKS}
	d\mathfrak{s}_k\ll d\bs_n, \quad \frac{d\mathfrak{s}_k}{d\bs_n}=0\text{ or }1,\quad d\bs_n\text{-a.e. on }{\mathring{I}_n}.
\end{equation}
For the first condition, it suffices to prove $I_n\subset \mathtt{I}_k$. Suppose that $I_n\nsubseteq \mathtt{I}_k$, which implies that $a_n=\mathfrak{a}_k, a_n\in I_n\setminus \mathtt{I}_k$ or $b_n=\mathfrak{b}_k, b_n\in I_n\setminus \mathtt{I}_k$. It follows that either $\bs_n(a_n)>-\infty, \mathfrak{s}_k(\mathfrak{a}_k)=-\infty$ or $\bs_n(b_n)<\infty, \mathfrak{s}_k(\mathfrak{b}_k)=\infty$. Either of them contradicts \eqref{EQ3SKS}.

Finally, for the second condition, we need only to show that
\begin{equation}\label{EQ3DSK}
	d\mathfrak{s}_k\left(\mathtt{I}_k\setminus \left(\bigcup_{n\geq 1}I_n\right)\right)=0, \quad k\geq 1.
\end{equation}
In fact, for any $u\in \mathfrak{F}$ with $u|_{\mathtt{I}^c_k}=0$, we have
\[
	\EE(u,u)=\frac{1}{2}\sum_{I_n\subset \mathtt{I}_k}\int_{I_n} \left(\frac{du}{d\bs_n}\right)^2d\bs_n,\quad \mathfrak{E}(u,u)=\frac{1}{2}\int_{\mathtt{I}_k}\left(\frac{du}{d\mathfrak{s}_k}\right)^2d\mathfrak{s}_k.
\]
Similar to \eqref{EQ3IND}, we can deduce from $\EE(u,u)=\mathfrak{E}(u,u)$ that
\[
\int_{\mathtt{I}_k\setminus \left(\bigcup_{n\geq 1}I_n\right)}\left(\frac{du}{d\mathfrak{s}_k}\right)^2d\mathfrak{s}_k=0.
\]
Therefore, we conclude that \eqref{EQ3DSK} holds. That completes the proof.
\end{proof}

\begin{remark}\label{RM32}
We shall present several remarks which contain some interesting facts, regarding two conditions in the theorem above.
Suppose that the conclusion of Theroem~\ref{thm2.1} holds.
\begin{itemize}
\item[(1)] The first condition is equivalent to say that $\{I_n: n\geq 1\}$ is finer than $\{\mathtt{I}_k: k\geq 1\}$. Since both $\{I_n: n\geq 1\}$ and $\{\mathtt{I}_k: k\geq 1\}$ satisfy (E1), the finer set $\{I_n: n\geq 1\}$ could be divided into 
\[
	\mathfrak{I}_k:=\{I_n: I_n\subset \mathtt{I}_k, n\geq 1\},\quad k\geq 1.
\]
Clearly, $\{\mathfrak{I}_k: k\geq 1\}$ are mutually disjoint. Furthermore, we claim that for each $k$, the intervals in $\mathfrak{I}_k$ are topologically dense in $\mathtt{I}_k$ in the sense that
\begin{equation}\label{EQ3IKJ}
	\mathtt{I}_k\setminus \left(\bigcup_{J\in \mathfrak{I}_k} J \right)
\end{equation}
is nowhere dense. In fact, suppose that the open set $(a,b)$ is contained in the closure of \eqref{EQ3IKJ}. Then $$(a,b)\subset \bar{\mathtt{I}}_k\setminus \left(\bigcup_{J\in \mathfrak{I}_k}\mathring{J}\right),$$ where $\bar{\mathtt{I}}_k$ is the closure of $\mathtt{I}_k$ and  $\mathring{J}$ is the interior of $J$. It follows that $\lambda_{\mathfrak{s}}\left((a,b) \right)>0$ but $\lambda_{\bs}\left((a,b) \right)=0$, which contradicts the absolute continuity $\lambda_{\mathfrak{s}}\ll \lambda_{\bs}$.
\item[(2)] The second condition is equivalent to the statement that the $d\mathfrak{s}_k$-measure of \eqref{EQ3IKJ} is equal to $0$ for any $k$ and whenever $I_n\in \mathfrak{I}_k$, it holds that $\mathfrak{s}_k\ll \bs_n$ on $I_n$ and
\[
	\frac{d\mathfrak{s}_k}{d\bs_n}=0\text{ or }1,\quad d\bs_n\text{-a.e. on }I_n.
\]
\item[(3)] $(\EE,\FF)=(\mathfrak{E},\mathfrak{F})$ if and only if $\{I_n:n\geq 1\}=\{\mathtt{I}_k: k\geq 1\}$ and $\lambda_{\bs}=\lambda_{\mathfrak{s}}$. The condition they share the same scale measure alone,
    $\lambda_{\bs}=\lambda_{\mathfrak{s}}$), can not ensure $(\mathfrak{E},\mathfrak{F})=(\EE,\FF)$. An example is given in the end of this section.

\item[(4)] When $(\EE,\FF)$ has only one effective interval $\mathbb{R}$, $(\mathfrak{E},\mathfrak{F})$ also has only one effective interval $\mathbb{R}$ and the first condition naturally holds. Then the scale measures reduce to the scale functions on $\mathbb{R}$ and the second condition is nothing but the one introduced in \cite{FFY05}.
\end{itemize}
\end{remark}

We end this section with two examples. The first example is given in \cite[Example~3.12 and Remark~3.15]{LY17}, in which $(\EE,\FF)$ and $(\mathfrak{E},\mathfrak{F})$ both have only one effective interval, and $I_1=(0,1]$ and $\mathtt{I}_1=[0,1]$. Clearly, $I_1\subset \mathtt{I}_1$. Note that $\bs_1$ is adapted to $I_1$ and $\mathfrak{s}_1$ is adapted to $\mathtt{I}_1$, which implies $\bs_1(0)=-\infty$ and $\mathfrak{s}_1(0)>-\infty$. Due to \eqref{EQ3DLS}, such $\bs_1$ and $\mathfrak{s}_1$ are constructed in \cite[Example~3.12]{LY17}.
Another example below is given in \cite[Example~3.20]{LY16} and \cite[Example~3.8]{LY17}.

\begin{example}\label{EXA33}
Let $m$ be the Lebesgue measure on $\mathbb{R}$ and $(\mathfrak{E},\mathfrak{F})$ be the Dirichlet form of 1-dim Brownian motion. The effective intervals of $(\EE,\FF)$ are given as follows. Let $K$ be the standard Cantor set in $[0,1]$. Set $U:=K^c$ and write $U$ as a union of disjoint open intervals:
\[U=\bigcup_{n \geq 1}(a_n,b_n)\]
where $(a_1,b_1)=(-\infty,0), (a_2,b_2)=(1,\infty)$. Let $I_1:=(-\infty,0]$, $I_2:=[1,\infty)$ and $I_n:=[a_n,b_n]$ for any $n\geq 3$. For each $n$, define the scale function $\bs_n(x)=x-e_n$ on $I_n$ ($e_n\in I_n$ as in \eqref{midpoint}). Since $\mathtt{I}_1=\mathbb{R}$ and $\mathfrak{s}_1(x)=x$, it follows that $I_n\subset \mathtt{I}_1$, and $d\bs_n$ and $d\mathfrak{s}_1$ coincide on $I_n$. Therefore, $\lambda_\bs=\lambda_\mathfrak{s}$, which is the Lebesgue measure on $\mathbb{R}$, as an example promised in Remark~\ref{RM32}(3).
\end{example}

\section{Road map to attain D-subspaces}\label{SEC4}

Fix a regular and strongly local Dirichlet form $(\EE,\FF)$ on $L^2(\mathbb{R},m)$ with effective intervals $\{(I_n, \bs_n): n\geq 1\}$ throughout this section. Though Theorem~\ref{thm2.1} in \S\ref{SEC3} completely characterizes the D-subspaces of $(\EE,\FF)$, it does not tell how to construct a D-subspace from $(\EE,\FF)$.  
In this section, we shall draw an intuitive road map  
to attain all possible D-subspaces of $(\EE,\FF)$.
Note that a D-subspace of $(\EE,\FF)$ is also expressed by another class of effective intervals. Thus essentially, it suffices to construct a new class of effective intervals.
In the following, we shall introduce two kinds of operations on effective intervals. Keep in mind that after each operation, the obtained  effective intervals would always enjoy the properties (E1) and (E2) in \S\ref{SEC2}. Finally, the principal theorem shows that every D-subspace can be attained by these two operations.

\subsection{Scale-shrink operation}

The first kind of operation is called \emph{scale-shrink}. It is composed of a key step and an additional step.

\subsubsection{Key step}

The key step based on \eqref{EQ3DLS} is to construct a new scale measure. Define a scale function $\bar{\bs}_n\in \mathbf{S}(I_n)$ such that $d\bar{\bs}_n\ll d\bs_n$ and
\begin{equation}\label{EQ4DSN}
\frac{d\bar{\bs}_n}{d\bs_n}=0 \text{ or }1,\quad d\bs_n\text{-a.e. on }I_n
\end{equation}
for each $n$ and set
\[
	\lambda_{\bar{\bs}}:=\sum_{n\geq 1} d\bar{\bs}_n,
\]
which is the scale measure associated to  $\{(I_n,\bar{\bs}_n): n\geq 1\}$. The measure $\lambda_{\bar{\bs}}$
is called a {shrinking} of $\lambda_\bs$.

This step produces a valid scale function on each $I_n$, but the sequence of pairs $\{(I_n,\bar{\bs}_n): n\geq 1\}$ might break (E2), adaptedness, so that it is possibly only a class of pre-effective intervals. In practice, when $I_n$ is closed, \eqref{EQ4DSN} implies that $\bar{\bs}_n$ is still adapted to $I_n$. However, when $I_n$ is not closed (semi-closed or open),  Lemma~\ref{resc} below tells us one can always find scale function $\bar{\bs}_n$ on $I_n$, which satisfies \eqref{EQ4DSN} and is not adapted to $I_n$.
A concrete example is given in \cite[Example~3.12]{LY17} and also mentioned in \S\ref{SEC3}.

\begin{lemma}\label{resc}
Let $J$ be an interval. Take $\bs\in \mathbf{S}(J)$ and fix a constant $\varepsilon>0$. Then there exists a scale function $\mathfrak{s}$ on $J$ such that $d\mathfrak{s}(J)<\varepsilon$, $d\mathfrak{s}\ll d\bs$ and
\begin{equation}\label{a.c.1}
\frac{d\mathfrak{s}}{d\bs}=0 \text{ or }  1, \quad d\bs\text{-a.e.}
\end{equation}
\end{lemma}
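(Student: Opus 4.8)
The constraint $\tfrac{d\mathfrak s}{d\bs}=0\text{ or }1$ $d\bs$-a.e. is equivalent to saying $d\mathfrak s=\mathbf 1_A\,d\bs$ for some Borel set $A\subset J$. So the whole problem reduces to producing a Borel set $A\subset J$ with $d\bs(A)<\varepsilon$ such that the function
\[
\mathfrak s(x):=\int_J \mathbf 1_A\,\mathbf 1_{(e,x]}\,d\bs\quad (x\ge e),\qquad \mathfrak s(x):=-\int_J \mathbf 1_A\,\mathbf 1_{(x,e]}\,d\bs\quad (x<e)
\]
is again a scale function, i.e.\ $\mathfrak s\in\mathbf S(J)$. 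Since $\bs$ is continuous, the measure $d\bs$ has no atoms, so $\mathfrak s$ is automatically continuous, and $\mathfrak s(e)=0$ by construction; the only nontrivial requirement is that $\mathfrak s$ be \emph{strictly} increasing, which holds precisely when $d\bs\bigl(A\cap(x,y)\bigr)>0$ for every open subinterval $(x,y)\subset J$. Thus I must build $A$ that is ``dense in $d\bs$-measure'' in $J$ yet has total $d\bs$-mass below $\varepsilon$.

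\textbf{Construction of $A$.} Fix a countable dense subset $\{r_i:i\ge1\}$ of $J$ and enumerate as $(c_k,d_k)$, $k\ge1$, all the intervals of the form $(r_i,r_j)$ with $r_i<r_j$. For each $k$, using that $d\bs$ is nonatomic (equivalently, $\bs$ is continuous), choose a closed subinterval $[u_k,v_k]\subset(c_k,d_k)$ with $u_k<v_k$ and
\[
d\bs\bigl([u_k,v_k]\bigr)=\bs(v_k)-\bs(u_k)<\varepsilon\,2^{-k}.
\]
Set $A:=\bigcup_{k\ge1}[u_k,v_k]\subset J$. Then $d\bs(A)\le\sum_{k\ge1}\varepsilon\,2^{-k}<\varepsilon$, so $d\mathfrak s(J)=d\bs(A)<\varepsilon$, and by construction $d\mathfrak s=\mathbf 1_A\,d\bs\ll d\bs$ with $d\mathfrak s/d\bs=\mathbf 1_A\in\{0,1\}$, which is \eqref{a.c.1}.

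\textbf{Verification of strict monotonicity.} Given any $x<y$ in $J$, the open interval $(x,y)$ contains two points of the dense set $\{r_i\}$, say $r_i<r_j$, hence $(r_i,r_j)=(c_k,d_k)$ for some $k$ and $[u_k,v_k]\subset(x,y)$. Since $\bs$ is strictly increasing, $d\bs([u_k,v_k])=\bs(v_k)-\bs(u_k)>0$, so $d\bs(A\cap(x,y))>0$ and therefore $\mathfrak s(y)-\mathfrak s(x)>0$. Combined with continuity and $\mathfrak s(e)=0$ this gives $\mathfrak s\in\mathbf S(J)$, completing the proof.

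\textbf{Main obstacle.} There is no deep obstruction here; the construction is a routine real-analysis argument. The one point requiring care is that the two demands on $A$ pull in opposite directions — it must be small in total $d\bs$-mass yet hit every subinterval in positive $d\bs$-mass — and this tension is resolved by the geometric weights $\varepsilon\,2^{-k}$. Minor attention is also needed for the case in which $J$ is unbounded or half-open, but the density argument above is insensitive to that, and nonatomicity of $d\bs$ (from continuity of $\bs$) is exactly what allows the pieces $[u_k,v_k]$ to be taken with arbitrarily small mass.
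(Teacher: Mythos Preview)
Your proof is correct and follows essentially the same approach as the paper: both construct a set of small $d\bs$-mass that meets every open subinterval in positive $d\bs$-measure by placing pieces of geometrically decreasing mass around (or inside intervals with endpoints in) a countable dense subset of $J$. The only cosmetic difference is that the paper takes an open neighborhood of each rational point, whereas you take a closed subinterval of each rational interval; the verification of strict monotonicity, continuity, and the mass bound are otherwise identical.
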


\begin{proof}
Set
\[
G:=\left\{\mathop{\bigcup}\limits_{k\geq 1} (q_k-r_k,q_k+r_k)\right\}\cap J,
\]
where $\{q_k:k\geq 1\}$ is the set of rational numbers in $J$, and $\{r_k:k\geq 1\}$ is a sequence of real number such that for each $k$, $\bs(q_k+r_k)-\bs(q_k-r_k)< \varepsilon2^{-k}$. Take a fixed point $e\in J$ as in \eqref{midpoint} and define
\[
	\mathfrak{s}(x)=\int_e^x 1_{G}(y)d\bs(y),\quad x\in J.
\]
We assert that $\mathfrak{s}$ satisfies all conditions. Clearly, $\mathfrak{s}$ is absolutely continuous with respect to $\bs$ and \eqref{a.c.1} holds.
Since $G$ is a dense open subset of $J$ and $\bs$ is strictly increasing, it follows that $\mathfrak{s}$ is strictly increasing and thus $\mathfrak{s}\in \SS(J)$.
Finally,
\[d\mathfrak{s}(J)=\int_J 1_{G}(y)d\bs(y) \leq \sum_{k\geq 1}\int_{(q_k-r_k,q_k+r_k)}d\bs <\varepsilon.
\]
That completes the proof.
\end{proof}



Before proceeding, we should make pre-effective intervals $\{(I_n,\bar{\bs}_n): n\geq 1\}$ be effective.
A naive way to restore (E2) after the key step is as follows. Whenever $\bar{\bs}_n$ is not adapted to $I_n$, the endpoints, in trouble, of $I_n$ are added to $I_n$ and a new interval $I^{\bar{\bs}}_n$ obtained so that $\bar{\bs}_n$ is adapted to $I^{\bar{\bs}}_n$. Precisely speaking, for any $n\geq 1$, set
\begin{equation}\label{EQ4ISN}
	I^{\bar{\bs}}_n:=\langle a_n, b_n\rangle,
\end{equation}
where $a_n\in I^{\bar{\bs}}_n$ (resp. $b_n\in I^{\bar{\bs}}_n$) if and only if $a_n+\bar{\bs}_n(a_n)>-\infty$ (resp. $b_n+\bar{\bs}_n(b_n)<\infty$). Note that $I_n\subset I^{\bar{\bs}}_n$.
However doing so  
might break (E1). In other words, some intervals could intersect with each other. Surely, we could merge the intersected intervals into a new one and maintain the scale measure. But this `interval-merge' seems not so clear and new (and endless) troubles might appear. The following examples show us a rough observation about this dilemma.

\begin{example}\label{EXA49}
In the example below, the scale function $\bs(x)=x-e$ on an arbitrary interval ($e$ is a fixed point given by \eqref{midpoint}) is called the natural scale function.  	
\begin{itemize}
\item[(1)] Consider a simple example with only two effective intervals: $I_1=[-1,0)$ and $I_2=(0,1]$.  Without loss of generality, assume that the scale functions $\bar{\bs}_1, \bar{\bs}_2$ in the key step are both the natural scale functions. Clearly, (E2) does not hold for the new scales. As in \eqref{EQ4ISN}, set $I^{\bar{\bs}}_1:=[-1,0]$ and $I^{\bar{\bs}}_2:=[0,1]$. Then $\bar{\bs}_1, \bar{\bs}_2$ are adapted to
$I^{\bar{\bs}}_1$ and $I^{\bar{\bs}}_2$ respectively but $I^{\bar{\bs}}_1\cap I^{\bar{\bs}}_2\neq \emptyset$. In this case, the merging step is easy and clear. Merge $I^{\bar{\bs}}_1$ and $I^{\bar{\bs}}_2$ into a new interval $[-1,1]$ and the induced scale function on $[-1,1]$ remains to be the natural scale function.
\item[(2)] Consider the following intervals: $I_1=(0,1)$, $I_2=(-1,-1/2)$,  and $I_n=(-1/(n-1), -1/n)$ for any $n\geq 3$. Let $\bar{\bs}_n$ be the natural scale function on $I_n$. As in \eqref{EQ4ISN}, we have
\[
	I^{\bar{\bs}}_2=[-1,-1/2],\quad I^{\bar{\bs}}_n=[-1/(n-1), -1/n],\quad n\geq 3.
\]
These intervals constitute a chain one by one. Note that $I^{\bar{\bs}}_1=[0,1]$ and $I^{\bar{\bs}}_1\cap I^{\bar{\bs}}_n=\emptyset$ for any $n\geq 2$. Thus it seems reasonable to divide them into two classes:
\[
	\mathfrak{I}_1=\{I^{\bar{\bs}}_1\},\quad \mathfrak{I}_2=\{I^{\bar{\bs}}_n: n\geq 2\},
\]
and then merge $\mathfrak{I}_2$ into a new interval
\begin{equation}\label{EQ4INI}
	\mathtt{I}_2:= \bigcup_{n\geq 2} I^{\bar{\bs}}_n=[-1, 0).
\end{equation}
However, the natural scale function is not adapted to $\mathtt{I}_2$. Thus we need to mimic \eqref{EQ4ISN} again and set $\mathtt{I}^{\bar{\bs}}_2:=[-1,0]$. Then the new trouble appears: $I^{\bar{\bs}}_1$ intersects with $\mathtt{I}^{\bar{\bs}}_2$. Finally, we could merge $I^{\bar{\bs}}_1$ and $\mathtt{I}^{\bar{\bs}}_2$ into $[-1,1]$ and the natural scale function is eventually adapted to $[-1,1]$.
\item[(3)] The repair procedures in the second example might be infinite. Mimicking the second example, we can always find a sequence of open intervals one by one, which converge in both directions. For example, let
\[
\begin{aligned}
	& I_1=(-1/2, 0),\quad  I_2=(0, 1/2), \\
	&I_{2n+1}=\left(\frac{1}{n+2}-1, \frac{1}{n+1} -1\right),\\
  &I_{2n+2}= \left(1-\frac{1}{n+1}, 1-\frac{1}{n+2}\right),\quad n\geq 1.
\end{aligned}
\]
Then $I_{2n+1}\downarrow -1$ and $I_{2n+2}\uparrow 1$ as $n\rightarrow \infty$. We call them a sequence of intervals in the first type. Now take a decreasing sequence of points $a_n\downarrow a$ and an increasing sequence of points $b_n\uparrow b$ with $a_1=b_1$. For each $n$, take a sequence of open intervals in the first type between $(b_n, b_{n+1})$ (resp. $(a_{n+1}, a_n)$), and $b_n, b_{n+1}$ (resp. $a_{n+1}, a_n$) are two convergent points. All the intervals between $a$ and $b$ are called a sequence of intervals in the second type. Similarly, we can build  sequences in the $m$-th type for any integer $m$. When dealing with these intervals by mimicking the procedures of the second example, we need to repeat the extension (as \eqref{EQ4ISN}) and merging (as \eqref{EQ4INI}) of intervals infinite many times.
\end{itemize}
\end{example}


\subsubsection{Minimal interval-merge}

 To overcome this problem, we need to design an additional step to complete scale-shrink operation. Heuristically speaking, these `virtually connected' intervals (in Example~\ref{EXA49}) should be glued together to make the scale function be adapted. This step is called `minimal interval-merge', because it is a minimal operation to get the job done.  

To show the details, let us introduce a new conception.  Recall that $e_n$ is a fixed point in $(a_n,b_n)$ for $n\geq 1$ and $[e_i,e_j]$ denotes the interval ended by $e_j$ and $e_i$ no matter which is bigger.
The following definition is stated for $\{(I_n,\bar{\bs}_n):n\ge 1\}$, but may actually be applied to any class of pre-effective intervals. This means that the minimal interval-merge may be applied to any class of pre-effective intervals.

\begin{definition}\label{DEF43}
We say that two intervals $I_i$ and $I_j$  are \emph{tightly scale-connected}, or $I_i$ is \emph{tightly scale-connected} to $I_j$, with respect to the scale measure $\lambda_{\bar{\bs}}$, if
\begin{itemize}
\item[(1)] $[e_i, e_j]\setminus \left(\bigcup_{n\geq 1}I_n\right)$ is at most countable;
\item[(2)] $\lambda_{\bar{\bs}}([e_i,e_j])<\infty$.
\end{itemize}
\end{definition}

\begin{remark}\label{RM44}
Clearly, all the intervals in each example of Example~\ref{EXA49} are tightly scale-connected. Let us give two further remarks for this definition.
\begin{itemize}
\item[(1)]
Condition (1) in Definition~\ref{DEF43} implies that $[e_i, e_j]\setminus \left(\bigcup_{n\geq 1}I_n\right)$ is nowhere dense. In fact, it is evident that $$\overline{[e_i, e_j]\setminus \left(\bigcup_{n\geq 1}I_n\right)}\subset [e_i, e_j]\setminus \left(\bigcup_{n\geq 1}\mathring{I}_n\right).$$ Condition (1) above implies that $[e_i, e_j]\setminus \left(\bigcup_{n\geq 1}\mathring{I}_n\right)$ has at most countable points, so that it contain no open non-empty intervals. Thus $[e_i, e_j]\setminus \left(\bigcup_{n\geq 1}I_n\right)$ is nowhere dense.
\item[(2)] The phenomena of tight scale-connection can only exist in the case of pre-effective intervals. For effective intervals $\{(I_n,\bs_n):n\ge 1\}$, we claim that any two different intervals $I_i$ and $I_j$ are not tightly scale-connected with respect to its scale measure $\lambda_\bs$. In fact, suppose two different intervals $I_i$ and $I_j$ are tightly scale-connected. Then $\lambda_{\bs}([e_i, e_j])<\infty$ implies that the endpoints of $\{I_n: n\geq 1\}$ between $e_i$ and $e_j$ must be closed. Particularly, the intervals between $I_i$ and $I_j$ are closed. Since $I_i\cap I_j=\emptyset$, it follows that the intervals between $e_i$ and $e_j$ have a Cantor-type structure. This indicates $[e_i, e_j]\setminus \left(\bigcup_{n\geq 1}I_n\right)$ has uncountable points, which contradicts condition (1) in Definition~\ref{DEF43}.
\end{itemize}
\end{remark}

The tight scale-connection is an equivalence relation for intervals $\{I_n: n\geq 1\}$. Denote all the equivalence classes induced by tight scale-connection by
\[
\{\mathfrak{I}_k: k\geq 1\},
\]
where $\mathfrak{I}_k$ is a subset of $\{I_n: n\geq 1\}$ for each $k$, in which any two intervals are tightly scale-connected. Note that if $I_i$ is tightly scale-connected to $I_j$, then any interval located between $I_i$ and $I_j$ must be tightly scale-connected to them. Thus each equivalence class looks like a `connected' cluster of intervals and there are at most countable points between these intervals.


The additional step, called \emph{minimal interval-merge}, is defined as follows. For each $k\geq 1$, set
\begin{equation}\label{EQ4AKX}
\mathfrak{a}_k:=\inf\{x\in J: J\in \mathfrak{I}_k\},\quad \mathfrak{b}_k:=\sup\{x\in J: J\in \mathfrak{I}_k\}.
\end{equation}
Let $\mathfrak{s}_k$ be the increasing function induced by the measure $\lambda_{\bar{\bs}}|_{(\mathfrak{a}_k,\mathfrak{b}_k)}$ and  
such that $\mathfrak{s}_k(\mathfrak{e}_k)=0$, where $\mathfrak{e}_k$ is a fixed point in $(\mathfrak{a}_k,\mathfrak{b}_k)$ (Cf. \eqref{midpoint}). More precisely,
$$\mathfrak{s}_k(x):=\int_{\mathfrak{e}_k}^xd\lambda_{\bar{\bs}}|_{(\mathfrak{a}_k,\mathfrak{b}_k)},\quad
x\in(\mathfrak{a}_k,\mathfrak{b}_k).$$
Clearly, $\mathfrak{s}_k\in \mathbf{S}\left((\mathfrak{a}_k,\mathfrak{b}_k)\right)$ by Definition~\ref{DEF43} and Remark~\ref{RM44}. Define now
\begin{equation}\label{EQ4IKA}
	\mathtt{I}_k:=\langle \mathfrak{a}_k, \mathfrak{b}_k \rangle,
\end{equation}
where $\mathfrak{a}_k\in \mathtt{I}_k$ (resp. $\mathfrak{b}_k\in \mathtt{I}_k$) if and only if $\mathfrak{a}_k+\mathfrak{s}_k(\mathfrak{a}_k)>-\infty$ (resp. $\mathfrak{b}_k+\mathfrak{s}_k(\mathfrak{b}_k)<\infty$).
For each $k$, the pair $(\mathtt{I}_k, \mathfrak{s}_k)$ is called \emph{{merging} of equivalence class} $\mathfrak{I}_k$ \emph{relative to} $\lambda_{\bar{\bs}}$.

\begin{remark}\label{RM45}
When $\bar{\bs}_n=\bs_n$ for any $n\geq 1$, two scale measures are the same. It is said in Remark~\ref{RM44}~(2) that any two intervals $I_i$ and $I_j$ are not tightly scale-connected with respect to the original scale measure $\lambda_{\bs}$. Hence the minimal interval-merge takes no action,  
i.e. $\{(\mathtt{I}_k, \mathfrak{s}_k): k\geq 1\}=\{(I_n, \bs_n): n\geq 1\}$.
\end{remark}

The following lemma asserts that $\{(\mathtt{I}_k,\mathfrak{s}_k): k\geq 1\}$, obtained by scale-shrink operation, is also a class of effective intervals, whose associated scale measure is $\lambda_{\bar{\bs}}$ defined in the key step. 

\begin{lemma}\label{LM46}
The sequence $\{(\mathtt{I}_k,\mathfrak{s}_k): k\geq 1\}$ is a class of effective intervals. 
In addition, the scale measure associated to $\{(\mathtt{I}_k,\mathfrak{s}_k): k\geq 1\}$ is exactly $\lambda_{\bar{\bs}}$, i.e. $\lambda_{\mathfrak{s}}=\lambda_{\bar{\bs}}$.
\end{lemma}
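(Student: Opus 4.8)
The plan is to verify the two defining properties (E1) and (E2) for the family $\{(\mathtt{I}_k,\mathfrak{s}_k):k\ge 1\}$ and then to identify the associated scale measure. I would first observe that by construction each $\mathtt{I}_k$ is sandwiched between $(\mathfrak{a}_k,\mathfrak{b}_k)$ and its closure, so the intervals $\{\mathtt{I}_k\}$ are pairwise disjoint precisely because the equivalence classes $\{\mathfrak{I}_k\}$ are separated: if $\mathtt{I}_k$ and $\mathtt{I}_\ell$ overlapped for $k\neq\ell$, then there would be intervals $I_i\in\mathfrak{I}_k$ and $I_j\in\mathfrak{I}_\ell$ lying arbitrarily close, or the closed endpoint of one would fall in the open interior of the span of the other; either way one shows $I_i$ and $I_j$ are tightly scale-connected with respect to $\lambda_{\bar\bs}$ (using that $\lambda_{\bar\bs}([\mathfrak{a}_k,\mathfrak{b}_k])<\infty$ near the contact point and that the complement of $\bigcup_n I_n$ there is at most countable, both facts coming from the definition of the classes), contradicting the fact that $I_i,I_j$ are in different equivalence classes. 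This gives (E1).

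For (E2), I would check that $\mathfrak{s}_k\in\SS_\infty(\mathtt{I}_k)$, i.e. that conditions $(\mathbf{A}_R)$ and $(\mathbf{B}_R)$ hold for the pair $(\mathtt{I}_k,\mathfrak{s}_k)$. But this is immediate from the definition \eqref{EQ4IKA}: the endpoint $\mathfrak{a}_k$ is declared to belong to $\mathtt{I}_k$ if and only if $\mathfrak{a}_k+\mathfrak{s}_k(\mathfrak{a}_k)>-\infty$, which is exactly $(\mathbf{A}_R)$, and symmetrically for $\mathfrak{b}_k$. One still has to check that $\mathfrak{s}_k$ is a genuine scale function on $(\mathfrak{a}_k,\mathfrak{b}_k)$, i.e. finite-valued, continuous and strictly increasing: finiteness on the open interval follows since $\lambda_{\bar\bs}$ restricted to $(\mathfrak{a}_k,\mathfrak{b}_k)$ is Radon (being dominated by $\lambda_\bs$), continuity follows since $\lambda_{\bar\bs}$ has no atoms (each $d\bar\bs_n$ is atomless), and strict increase follows from Remark~\ref{RM44}, which shows the intervals of $\mathfrak{I}_k$ are topologically dense in their span, so $\lambda_{\bar\bs}$ charges every subinterval of $(\mathfrak{a}_k,\mathfrak{b}_k)$ positively. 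This is essentially the same computation as the one used to establish $\mathfrak{s}_k\in\SS((\mathfrak{a}_k,\mathfrak{b}_k))$ right before the statement, so I would just record it.

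For the scale-measure identity $\lambda_{\mathfrak{s}}=\lambda_{\bar\bs}$, I would argue that $d\mathfrak{s}_k=\lambda_{\bar\bs}|_{(\mathfrak{a}_k,\mathfrak{b}_k)}$ on $(\mathfrak{a}_k,\mathfrak{b}_k)$ directly from the formula $\mathfrak{s}_k(x)=\int_{\mathfrak{e}_k}^x d\lambda_{\bar\bs}|_{(\mathfrak{a}_k,\mathfrak{b}_k)}$, and that the $\mathtt{I}_k$ together exhaust the support of $\lambda_{\bar\bs}$ up to an $\lambda_{\bar\bs}$-null set, because $\lambda_{\bar\bs}=\sum_n d\bar\bs_n$ is supported on $\bigcup_n I_n\subset\bigcup_k(\mathfrak{a}_k,\mathfrak{b}_k)$, and the boundary points $\{\mathfrak{a}_k,\mathfrak{b}_k\}$ form a countable, hence $\lambda_{\bar\bs}$-null, set. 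Summing over $k$ then gives $\lambda_{\mathfrak{s}}=\sum_k d\mathfrak{s}_k=\sum_k \lambda_{\bar\bs}|_{(\mathfrak{a}_k,\mathfrak{b}_k)}=\lambda_{\bar\bs}$.

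The main obstacle is the disjointness claim (E1): one must rule out every way that two merged intervals could touch, including the subtle case where the extension in \eqref{EQ4IKA} adds an endpoint to $\mathtt{I}_k$ that coincides with an endpoint (or interior point) of a neighboring $\mathtt{I}_\ell$. The key is to show that any such contact would force the two underlying equivalence classes to be tightly scale-connected, contradicting maximality of the classes; the delicate point is verifying the finiteness condition (2) of Definition~\ref{DEF43} at the contact, which uses that a closed endpoint $\mathfrak{a}_k\in\mathtt{I}_k$ means $\mathfrak{a}_k+\mathfrak{s}_k(\mathfrak{a}_k)>-\infty$, hence $\lambda_{\bar\bs}$ is finite up to $\mathfrak{a}_k$, and similarly from the other side.
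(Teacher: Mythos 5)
Your proposal is correct and follows essentially the same route as the paper: the heart of the matter is (E1), and you rule out contact between two merged intervals by showing it would force representatives of the two equivalence classes to be tightly scale-connected, with finiteness of $\lambda_{\bar{\bs}}$ near the contact point coming from the closed-endpoint condition in \eqref{EQ4IKA} and countability of the complement of $\bigcup_n I_n$ coming from the class structure on each side — exactly the paper's argument. The additional verifications you record (that $\mathfrak{s}_k$ is a genuine adapted scale function and that $\lambda_{\mathfrak{s}}=\lambda_{\bar{\bs}}$ because the open spans exhaust the support of $\lambda_{\bar{\bs}}$ up to a countable null set) are points the paper dismisses as immediate, so you are simply being more explicit rather than taking a different path.
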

\begin{proof}
It suffices to prove that $\{\mathtt{I}_k: k\geq 1\}$ are mutually disjoint.  Suppose that $\mathtt{I}_1\cap \mathtt{I}_2\neq \emptyset$ and $\mathfrak{b}_1=\mathfrak{a}_2$. Take an interval $I_i\in \mathfrak{I}_1$ and another interval $I_j\in \mathfrak{I}_2$. We assert that $I_i$ is tightly scale-connected to $I_j$, which contradicts the definition of equivalence classes. In fact, since $\mathfrak{b}_1\in \mathtt{I}_1$ and $\mathfrak{a}_2\in \mathtt{I}_2$, it follows that $\lambda_{\bar{\bs}}([e_i, e_j])<\infty$. On the other hand, we can take a sequence of increasing intervals $\{I_{p_m}: m\geq 1\}\subset \mathfrak{I}_1$ with $I_{p_1}=I_i$ such that $a_{p_m}, b_{p_m} \uparrow \mathfrak{b}_1$ as $m\rightarrow \infty$ and another sequence of decreasing intervals $\{I_{q_m}: m\geq 1\}\subset \mathfrak{I}_2$ with $I_{q_1}=I_j$ such that $a_{q_m}, b_{q_m} \downarrow \mathfrak{a}_2$ as $m\rightarrow \infty$. Since both $$(b_{p_m}, a_{p_{m+1}})\setminus\left( \bigcup_{n\geq 1} I_n\right)\ \text{and}\  (b_{q_{m+1}}, a_{q_{m}})\setminus\left( \bigcup_{n\geq 1} I_n\right)$$ contain at most countable points, we then conclude that $[e_i,e_j]\setminus \left( \bigcup_{n\geq 1} I_n\right)$ contains at most countable points. That completes the proof.
\end{proof}

\subsubsection{Scale-shrink operation}

We may summarize scale-shrink operation in the following definition.

\begin{definition}\label{DEF48}
A scale-shrink operation on $\{(I_n, \bs_n): n\geq 1\}$ proceeds as follows:
\begin{itemize}
\item[(1)] Key step. Identify a new scale measure $\lambda_{\bar{\bs}}$ by defining a new scale function $\bar{\bs}_n$  satisfying \eqref{EQ4DSN}  on each interval $I_n$.
\item[(2)] Minimal interval-merge. Divide $\{I_n: n\geq 1\}$ into several equivalence classes $\{\mathfrak{I}_k: k\geq 1\}$ according to tight scale-connection with respect to $\lambda_{\bar{\bs}}$, and for each $k\geq 1$, merge $\mathfrak{I}_k$ with scale functions into a new pair $(\mathtt{I}_k, \mathfrak{s}_k)$.
\end{itemize}
This operation gives a new class of effective intervals $\{(\mathtt{I}_k, \mathfrak{s}_k): k\geq 1\}$.
\end{definition}

Let us give two remarks for this operation. We first present a proposition, which ensure that it is a right approach to attain D-subspaces. Let  $(\mathfrak{E},\mathfrak{F})$  be the Dirichlet form given by \eqref{EQ2FULR} with effective intervals $\{(\mathtt{I}_k,\mathfrak{s}_k): k\geq 1\}$ in Definition~\ref{DEF48}.

\begin{proposition}\label{PRO48}
 $(\mathfrak{E},\mathfrak{F})$ is a D-subspace of $(\EE,\FF)$.
\end{proposition}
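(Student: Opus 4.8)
The plan is to verify the two conditions of Theorem~\ref{thm2.1} for the pair $(\EE,\FF)$ and $(\mathfrak{E},\mathfrak{F})$, where $(\mathfrak{E},\mathfrak{F})$ is built from the effective intervals $\{(\mathtt{I}_k,\mathfrak{s}_k):k\ge 1\}$ produced by the scale-shrink operation in Definition~\ref{DEF48}. By Lemma~\ref{LM46} we already know $\{(\mathtt{I}_k,\mathfrak{s}_k):k\ge 1\}$ is a genuine class of effective intervals with associated scale measure $\lambda_{\bar\bs}$, so $(\mathfrak{E},\mathfrak{F})$ is a well-defined regular and strongly local Dirichlet form on $L^2(\mathbb{R},m)$ of the form \eqref{EQ2FULR}. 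It then remains only to check: (1) $\{\mathtt{I}_k:k\ge 1\}$ is coarser than $\{I_n:n\ge 1\}$, i.e. each $I_n$ sits inside some $\mathtt{I}_k$; and (2) $\lambda_{\mathfrak{s}}=\lambda_{\bar\bs}\ll\lambda_\bs$ with Radon--Nikodym derivative $0$ or $1$ $\lambda_\bs$-a.e.

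First I would dispose of condition (2), which is essentially immediate from the construction: by Lemma~\ref{LM46}, $\lambda_{\mathfrak{s}}=\lambda_{\bar\bs}=\sum_{n\ge1}d\bar\bs_n$, and the key step \eqref{EQ4DSN} gives $d\bar\bs_n\ll d\bs_n$ with $d\bar\bs_n/d\bs_n\in\{0,1\}$ $d\bs_n$-a.e. on $I_n$. Since the $I_n$ are disjoint and $\lambda_\bs=\sum_{n\ge1}d\bs_n$ is supported on $\overline{\bigcup_n I_n}$ with $\lambda_\bs(\partial)$ carrying no mass beyond $\bigcup_n I_n$ in the relevant sense, summing over $n$ yields $\lambda_{\bar\bs}\ll\lambda_\bs$ and $d\lambda_{\bar\bs}/d\lambda_\bs=0$ or $1$, $\lambda_\bs$-a.e. (One should note here that $\lambda_{\bar\bs}(\mathbb{R}\setminus\bigcup_n I_n)=0$ because each $d\bar\bs_n$ lives on $I_n$, which is exactly what makes the derivative well-defined $\lambda_\bs$-a.e.)

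The substantive part is condition (1). Here the point is that the minimal interval-merge only ever \emph{enlarges}: by \eqref{EQ4AKX} and \eqref{EQ4IKA}, $\mathtt{I}_k=\langle\mathfrak{a}_k,\mathfrak{b}_k\rangle$ with $\mathfrak{a}_k=\inf\{x\in J:J\in\mathfrak{I}_k\}$ and $\mathfrak{b}_k=\sup\{x\in J:J\in\mathfrak{I}_k\}$, so every $J\in\mathfrak{I}_k$ satisfies $\mathring{J}\subset(\mathfrak{a}_k,\mathfrak{b}_k)\subset\mathtt{I}_k$. Since the tight-scale-connection classes $\{\mathfrak{I}_k:k\ge1\}$ partition $\{I_n:n\ge1\}$, each $I_n$ belongs to exactly one $\mathfrak{I}_k$, so at least $\mathring{I}_n\subset\mathtt{I}_k$. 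The only thing requiring care is whether a \emph{closed} endpoint of $I_n$ could fall outside $\mathtt{I}_k$ — that is, whether an endpoint $a_n\in I_n$ might fail to lie in $\mathtt{I}_k$. I would argue this cannot happen by examining \eqref{EQ4IKA}: if $a_n=\mathfrak{a}_k$ then, because $a_n\in I_n$ forces $a_n+\bar\bs_n(a_n)>-\infty$ (adaptedness is preserved on a closed endpoint under \eqref{EQ4DSN}, as remarked after Lemma~\ref{resc}), and since $\mathfrak{s}_k$ is obtained by integrating $\lambda_{\bar\bs}|_{(\mathfrak{a}_k,\mathfrak{b}_k)}\ge d\bar\bs_n|_{(\mathfrak{a}_k,\mathfrak{b}_k)}$ near $\mathfrak{a}_k$, one gets $\mathfrak{a}_k+\mathfrak{s}_k(\mathfrak{a}_k)>-\infty$, hence $\mathfrak{a}_k\in\mathtt{I}_k$ by \eqref{EQ4IKA}; if instead $a_n>\mathfrak{a}_k$ then $a_n$ is an interior point of $\mathtt{I}_k$ and there is nothing to prove. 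The symmetric argument handles the right endpoint. Thus $I_n\subset\mathtt{I}_k$ in all cases.

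The main obstacle I anticipate is precisely this endpoint bookkeeping in condition (1): one must rule out the degenerate possibility that a closed boundary point of some $I_n$ is ``merged away'' to an open boundary point of $\mathtt{I}_k$, which would violate $I_n\subset\mathtt{I}_k$. The resolution rests on comparing $\mathfrak{s}_k$ near $\mathfrak{a}_k$ with $\bar\bs_n$ near $a_n$ via the inequality $\lambda_{\bar\bs}|_{(\mathfrak{a}_k,\mathfrak{b}_k)}\ge d\bar\bs_n$ on $I_n\cap(\mathfrak{a}_k,\mathfrak{b}_k)$ together with the fact that the key step \eqref{EQ4DSN} preserves adaptedness at any closed endpoint. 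Once both conditions (1) and (2) are in hand, Theorem~\ref{thm2.1} applies directly and gives that $(\mathfrak{E},\mathfrak{F})$ is a D-subspace of $(\EE,\FF)$, completing the proof.
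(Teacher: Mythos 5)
Your proposal is correct and follows essentially the same route as the paper: verify the two conditions of Theorem~\ref{thm2.1}, get condition (2) from Lemma~\ref{LM46} together with \eqref{EQ4DSN}, and for condition (1) observe that $\mathring{I}_n\subset\mathtt{I}_k$ by construction and that a closed endpoint $a_n=\mathfrak{a}_k$ cannot be lost because $\bar{\bs}_n(a_n)>-\infty$ forces $\mathfrak{s}_k(\mathfrak{a}_k)>-\infty$. One small correction in that last step: the inequality $\lambda_{\bar{\bs}}|_{(\mathfrak{a}_k,\mathfrak{b}_k)}\ge d\bar{\bs}_n$ points the wrong way for proving finiteness of $\mathfrak{s}_k(\mathfrak{a}_k)$ --- what you actually need (and have) is that $\lambda_{\bar{\bs}}$ \emph{coincides} with $d\bar{\bs}_n$ on $(\mathfrak{a}_k,b_n)$, since $\mathfrak{a}_k=a_n$ is the infimum of the class and the $I_m$ are pairwise disjoint, so no other $d\bar{\bs}_m$ contributes mass near $\mathfrak{a}_k$.
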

\begin{proof}
 It suffices to verify two conditions in Theorem~\ref{thm2.1}. For any $n\geq 1$, $I_n$ belongs to an equivalence class, say $\mathfrak{I}_k$. We assert $I_n\subset \mathtt{I}_k$. In fact, it follows from \eqref{EQ4AKX} and \eqref{EQ4IKA} that $\mathring{I}_n\subset \mathtt{I}_k$. Suppose $a_n\in I_n$ but $a_n\notin \mathtt{I}_k$. This implies that $\mathfrak{a}_k=a_n$ and $\bar{\bs}_n(a_n)>-\infty, \mathfrak{s}_k(\mathfrak{a}_k)=-\infty$, which contradicts the definition of $\mathfrak{s}_k$. Thus $\{\mathtt{I}_k: k\geq 1\}$ is coarser than $\{I_n: n\geq 1\}$. On the other hand, from the first term of Definition~\ref{DEF43}, we can deduce that $\mathtt{I}_k\setminus \left(\bigcup_{n\geq 1}I_n \right)$ is at most countable. Since $\lambda_{\bar{\bs}}$ charges no singleton, we have
\[
	d\mathfrak{s}_k\left(\mathtt{I}_k\setminus \left(\bigcup_{n\geq 1}I_n \right)\right)=\lambda_{\bar{\bs}}\left(\mathtt{I}_k\setminus \left(\bigcup_{n\geq 1}I_n \right)\right)=0.
\]
Then it follows from Remark~\ref{RM32}~(2) and \eqref{EQ4DSN} that \eqref{EQ3DLS} holds.
That completes the proof.
\end{proof}

Next, we note that the minimal interval-merge overcomes the dilemma illustrated in Example~\ref{EXA49}. Roughly speaking, it takes essentially the fewest actions (such as \eqref{EQ4ISN} and \eqref{EQ4INI}) to attain a new class of effective intervals under the given scale measure $\lambda_{\bar{\bs}}$. The following proposition makes the rough idea above rigorous.  
Note that under the fixed scale measure, a D-subspace is always characterized by a coarser class of intervals, and thus a smaller D-subspace requests more merging.

\begin{proposition}\label{PRO410}
Let $(\EE,\FF)$ and $(\mathfrak{E}, \mathfrak{F})$ be in Proposition~\ref{PRO48}. If $(\EE',\FF')$ is another D-subspace of $(\EE,\FF)$ with the same scale measure as $(\mathfrak{E}, \mathfrak{F})$, then $(\EE',\FF')$ is a D-subspace of $(\mathfrak{E},\mathfrak{F})$ on $L^2(\mathbb{R},m)$.
\end{proposition}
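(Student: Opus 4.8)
The plan is to recast the statement, via Theorem~\ref{thm2.1}, as a claim about effective intervals, and then to feed the tight-scale-connection structure built into $(\mathfrak{E},\mathfrak{F})$ into Remark~\ref{RM44}(2). Write $\{(\mathtt{I}'_j,\mathfrak{s}'_j):j\ge 1\}$ for the effective intervals of $(\EE',\FF')$, and note its scale measure is $\lambda_{\mathfrak{s}'}=\lambda_{\mathfrak{s}}=\lambda_{\bar{\bs}}$ by hypothesis and Lemma~\ref{LM46}. Hence, in applying Theorem~\ref{thm2.1} to the pair $\bigl((\EE',\FF'),(\mathfrak{E},\mathfrak{F})\bigr)$, condition (2) is automatic (the Radon--Nikodym density is identically $1$), and what must be shown is condition (1): every $\mathtt{I}_k$ is contained in a single $\mathtt{I}'_j$.

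Since $(\EE',\FF')$ is already a D-subspace of $(\EE,\FF)$, Theorem~\ref{thm2.1} tells us $\{\mathtt{I}'_j\}$ is coarser than $\{I_n\}$, so every $I_n$ lies in some $\mathtt{I}'_{j(n)}$ and $\bigcup_n I_n\subset\bigcup_j\mathtt{I}'_j$. Fix $k$ and recall $\mathtt{I}_k=\langle\mathfrak{a}_k,\mathfrak{b}_k\rangle$ is the merging of an equivalence class $\mathfrak{I}_k$ whose members are pairwise tightly scale-connected with respect to $\lambda_{\bar{\bs}}$. The core step is that $n\mapsto\mathtt{I}'_{j(n)}$ is constant on $\mathfrak{I}_k$. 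Suppose not: some $I_i,I_{i'}\in\mathfrak{I}_k$ lie in distinct, hence disjoint, intervals $\mathtt{I}'_j$ and $\mathtt{I}'_{j'}$, say with $\mathtt{I}'_j$ to the left of $\mathtt{I}'_{j'}$. Since $\mathring{I}_i\subset\mathring{\mathtt{I}}'_j$ and $\mathring{I}_{i'}\subset\mathring{\mathtt{I}}'_{j'}$, the distinguished points satisfy $e_i,\mathfrak{e}'_j\in\mathring{\mathtt{I}}'_j$ and $e_{i'},\mathfrak{e}'_{j'}\in\mathring{\mathtt{I}}'_{j'}$. Putting $\alpha:=\min(e_i,\mathfrak{e}'_j)$ and $\beta:=\max(e_{i'},\mathfrak{e}'_{j'})$ one has $[\mathfrak{e}'_j,\mathfrak{e}'_{j'}]\subset[\alpha,\beta]=[\alpha,e_i]\cup[e_i,e_{i'}]\cup[e_{i'},\beta]$, where $[\alpha,e_i]$ and $[e_{i'},\beta]$ are compact subsets of $\mathring{\mathtt{I}}'_j$, $\mathring{\mathtt{I}}'_{j'}$ respectively — on which $\lambda_{\mathfrak{s}'}$ coincides with the Radon measures $d\mathfrak{s}'_j$, $d\mathfrak{s}'_{j'}$ — while $\lambda_{\mathfrak{s}'}([e_i,e_{i'}])=\lambda_{\bar{\bs}}([e_i,e_{i'}])<\infty$ by tight scale-connection of $I_i,I_{i'}$. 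This gives $\lambda_{\mathfrak{s}'}([\mathfrak{e}'_j,\mathfrak{e}'_{j'}])<\infty$; and $[\mathfrak{e}'_j,\mathfrak{e}'_{j'}]\setminus\bigcup_l\mathtt{I}'_l\subset[e_i,e_{i'}]\setminus\bigcup_l\mathtt{I}'_l\subset[e_i,e_{i'}]\setminus\bigcup_n I_n$ is at most countable. Thus $\mathtt{I}'_j$ and $\mathtt{I}'_{j'}$ would be tightly scale-connected with respect to $\lambda_{\mathfrak{s}'}$, contradicting Remark~\ref{RM44}(2). So all of $\mathfrak{I}_k$ lies in one interval $\mathtt{I}'_{j(k)}$.

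It remains to upgrade this to $\mathtt{I}_k\subset\mathtt{I}'_{j(k)}$. For the interior, any $x\in(\mathfrak{a}_k,\mathfrak{b}_k)$ lies between points of two members of $\mathfrak{I}_k$ (by the definitions of $\mathfrak{a}_k,\mathfrak{b}_k$ as an infimum and a supremum), both inside the interval $\mathtt{I}'_{j(k)}$, so $x\in\mathtt{I}'_{j(k)}$; hence $(\mathfrak{a}_k,\mathfrak{b}_k)\subset\mathtt{I}'_{j(k)}$. For an endpoint, say $\mathfrak{a}_k$: if $\mathfrak{a}_k\in\mathtt{I}_k$, then by \eqref{EQ4IKA} $\mathfrak{a}_k$ is finite with $\lambda_{\bar{\bs}}$ finite just to its right, and either $\mathfrak{a}_k$ is already interior to $\mathtt{I}'_{j(k)}$ or $\mathfrak{a}_k$ equals the left endpoint of $\mathtt{I}'_{j(k)}$, in which case $\lambda_{\mathfrak{s}'}=\lambda_{\bar{\bs}}$ together with the finiteness just noted (and finiteness of $d\mathfrak{s}'_{j(k)}$ on compact subsets of $\mathring{\mathtt{I}}'_{j(k)}$) makes $\mathfrak{s}'_{j(k)}$ finite at $\mathfrak{a}_k$, so the adaptedness criterion $(\textbf{A}_R)$ forces $\mathfrak{a}_k\in\mathtt{I}'_{j(k)}$; the right endpoint is symmetric. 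This yields condition (1) of Theorem~\ref{thm2.1}, so $(\EE',\FF')$ is a D-subspace of $(\mathfrak{E},\mathfrak{F})$.

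I expect the main obstacle to be the core step of the second paragraph: showing that a split of an equivalence class across two effective intervals of $(\EE',\FF')$ would make those two intervals tightly scale-connected, so that Remark~\ref{RM44}(2) can be invoked. The delicate point is that the four distinguished points $e_i,e_{i'},\mathfrak{e}'_j,\mathfrak{e}'_{j'}$ need not be nested, so one must engineer the enclosing interval $[\alpha,\beta]$ and split it so that each piece has controlled scale measure (two pieces lying strictly inside the relevant open intervals, the middle piece controlled by tight scale-connection of $I_i,I_{i'}$); the endpoint analysis in the last paragraph and the verification of condition (2) of Theorem~\ref{thm2.1} are routine by comparison.
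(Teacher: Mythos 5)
Your proof is correct and follows essentially the same route as the paper's: the paper also reduces to showing that no equivalence class $\mathfrak{I}_k$ can split across two effective intervals of $(\EE',\FF')$, deriving the contradiction by re-running the uncountability argument of Remark~\ref{RM44}(2) on $[e_i,e_{i'}]\setminus\bigcup_m I'_m$, whereas you package the same mechanism by verifying that the two containing intervals would themselves be tightly scale-connected and then citing Remark~\ref{RM44}(2) directly. Your third paragraph additionally spells out the endpoint/adaptedness step ($\mathtt{I}_k\subset\mathtt{I}'_{j(k)}$, not just $\mathring{\mathtt{I}}_k\subset\mathtt{I}'_{j(k)}$) that the paper leaves implicit; this is a welcome but not essentially different addition.
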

\begin{proof}
Let $\{I'_m: m\geq 1\}$ be the effective intervals of $(\EE',\FF')$. By Theorem~\ref{thm2.1}, it suffices to prove that $\{I'_m: m\geq 1\}$ is coarser that $\{\mathtt{I}_k: k\geq 1\}$. Since $(\EE',\FF')$ is a D-subspace of $(\EE,\FF)$, it follows from Remark~\ref{RM32}~(1) that we can divide $\{I_n: n\geq 1\}$ into classes:
\[
\mathcal{I}_m:=\{I_n: I_n\subset I'_m,n\geq 1\},\quad m\geq 1.
\]
We assert that for any $k$, $\mathfrak{I}_k\subset \mathcal{I}_m$ for some $m$, which implies that $\mathtt{I}_k\subset I'_m$. In fact, suppose that $I_1, I_2\in \mathfrak{I}_k$ but $I_1\in \mathcal{I}_1, I_2\in \mathcal{I}_2$. Since $\lambda_\mathfrak{s}([e_1,e_2])<\infty$, it follows that the endpoints of $\{I'_m: m\geq 1\}$ between $e_1$ and $e_2$ must be closed. Mimicking Remark~\ref{RM44}~(2), we obtain that $[e_1,e_2]\setminus \bigcup_{m\geq 1}I'_m$ has uncountable points. Since $\bigcup_{n\geq 1}I_n\subset \bigcup_{m\geq 1}I'_m$, we know that $[e_1,e_2]\setminus \bigcup_{n\geq 1}I_n$ also has uncountable points, which contradicts the tight scale-connection of $I_1$ and $I_2$. That completes the proof.
\end{proof}

\subsection{Optional interval-merge operation}\label{SEC42}

The scale-shrink operation discussed above produces the largest D-subspace with the given scale measure, as indicated in Proposition~\ref{PRO410}. In order to attain all D-subspaces with the given scale measure, we need more merging. We call this operation the \emph{optional interval-merge}.

\subsubsection{Maximal interval-merge}

Let us start with a special case of optional interval-merge, the so-called maximal interval-merge. Since the scale measure is fixed, there is no loss of generality to start from effective intervals $\{(I_n, \bs_n): n\geq 1\}$ and scale measure $\lambda_\bs$.  The following notion is the basis of maximal interval-merge.

\begin{definition}\label{DEF42}
We call two intervals $I_i$ and $I_j$  are \emph{loosely scale-connected}, or $I_i$ is \emph{loosely scale-connected} to $I_j$, with respect to the scale measure $\lambda_\bs$, if
\begin{itemize}
\item[(1)] $[e_i, e_j]\setminus \left(\bigcup_{n\geq 1}I_n\right)$ is nowhere dense;
\item[(2)] $\lambda_{\bs}([e_i,e_j])<\infty$.
\end{itemize}
\end{definition}

\begin{remark}
Tight scale-connection implies loose scale-connection by Remark~\ref{RM44}, but not vice versa.   
For example, the effective intervals in Example~\ref{EXA33} are loosely scale-connected with each other. However, Remark~\ref{RM44}~(2) tells us any interval $I_n$ is tightly scale-connected to itself only.
\end{remark}

Like tight scale-connection, loose scale-connection is also an equivalence relation on intervals $\{I_n: n\geq 1\}$. Analogously, denote all the equivalence classes of $\{I_n: n\geq 1\}$ induced by loose scale-connection by
\begin{equation}\label{EQ4IKK}
\{\mathfrak{I}_k: k\geq 1\}.
\end{equation}
The \emph{maximal interval-merge} proceeds as follows: Merge each $\mathfrak{I}_k$ into a new interval \begin{equation}
\label{maxint}\mathtt{I}^\mathfrak{m}_k:=\langle \mathfrak{a}^\mathfrak{m}_k, \mathfrak{b}^\mathfrak{m}_k\rangle\end{equation} and induce a new scale function $\mathfrak{s}^\mathfrak{m}_k$ on it by means of $\lambda_{\bs}$ as the procedures above Remark~\ref{RM45}. Eventually, the merging of equivalent class $\mathfrak{I}_k$ relative to $\lambda_\bs$ is $(\mathtt{I}^\mathfrak{m}_k, \mathfrak{s}^\mathfrak{m}_k)$.

We shall now prove that under the given scale measure, the Dirichlet space obtained by this operation is the smallest D-subspace of $(\EE,\FF)$.  
This is also why we call it the `maximal' interval-merge.  


\begin{proposition}\label{PRO414}
The set $\{(\mathtt{I}^\mathfrak{m}_k, \mathfrak{s}^\mathfrak{m}_k): k\geq 1\}$ is a class of effective intervals, whose scale measure equals $\lambda_\bs$. Let $(\mathfrak{E}^\mathfrak{m}, \mathfrak{F}^\mathfrak{m})$ be the Dirichlet form given by effective intervals $\{(\mathtt{I}^\mathfrak{m}_k, \mathfrak{s}^\mathfrak{m}_k): k\geq 1\}$. Then $(\mathfrak{E}^\mathfrak{m}, \mathfrak{F}^\mathfrak{m})$ is a D-subspace of $(\EE,\FF)$ on $L^2(\mathbb{R},m)$. Furthermore, if $(\EE',\FF')$ is a D-subspace of $(\EE,\FF)$ with the scale measure $\lambda_\bs$, then $(\mathfrak{E}^\mathfrak{m}, \mathfrak{F}^\mathfrak{m})$ is also a D-subspace of $(\EE',\FF')$.
\end{proposition}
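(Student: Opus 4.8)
The plan is to verify the three assertions of Proposition~\ref{PRO414} in turn, reducing the first two to facts already available and then concentrating effort on the minimality statement. For the claim that $\{(\mathtt{I}^\mathfrak{m}_k, \mathfrak{s}^\mathfrak{m}_k): k\geq 1\}$ is a class of effective intervals, I would follow the template of Lemma~\ref{LM46}: property (E2) holds by construction, since $\mathfrak{s}^\mathfrak{m}_k$ is induced by $\lambda_{\bs}|_{(\mathfrak{a}^\mathfrak{m}_k,\mathfrak{b}^\mathfrak{m}_k)}$ and an endpoint is placed in $\mathtt{I}^\mathfrak{m}_k$ precisely when $\mathfrak{s}^\mathfrak{m}_k$ remains adapted; and property (E1), mutual disjointness, follows by the same contradiction argument used in Lemma~\ref{LM46}, replacing "tightly scale-connected" with "loosely scale-connected" throughout — if $\mathfrak{b}^\mathfrak{m}_1 = \mathfrak{a}^\mathfrak{m}_2$ one produces intervals $I_i \in \mathfrak{I}_1$, $I_j \in \mathfrak{I}_2$ that are loosely scale-connected (the finiteness $\lambda_{\bs}([e_i,e_j])<\infty$ comes from both endpoints being closed, and nowhere-density of $[e_i,e_j]\setminus\bigcup_n I_n$ is inherited from the two pieces), contradicting that $\mathfrak{I}_1 \ne \mathfrak{I}_2$. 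That the scale measure equals $\lambda_\bs$ is immediate since $d\mathfrak{s}^\mathfrak{m}_k = \lambda_{\bs}|_{(\mathfrak{a}^\mathfrak{m}_k,\mathfrak{b}^\mathfrak{m}_k)}$ and the $(\mathfrak{a}^\mathfrak{m}_k,\mathfrak{b}^\mathfrak{m}_k)$ cover $\bigcup_n I_n$ up to a nowhere-dense, hence $\lambda_\bs$-null, set.

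For the assertion that $(\mathfrak{E}^\mathfrak{m}, \mathfrak{F}^\mathfrak{m})$ is a D-subspace of $(\EE,\FF)$, I would invoke Theorem~\ref{thm2.1}: condition (2) is trivial because the scale measures are equal (so $d\lambda_{\mathfrak{s}^\mathfrak{m}}/d\lambda_\bs \equiv 1$), and condition (1), coarseness, follows exactly as in the proof of Proposition~\ref{PRO48} — for $I_n \in \mathfrak{I}_k$ one has $\mathring{I}_n \subset \mathtt{I}^\mathfrak{m}_k$ from \eqref{maxint}, and a closed endpoint of $I_n$ cannot be excluded from $\mathtt{I}^\mathfrak{m}_k$ without contradicting the definition of $\mathfrak{s}^\mathfrak{m}_k$ (if $\mathfrak{a}^\mathfrak{m}_k = a_n$ with $a_n \in I_n$, adaptedness of $\bs_n$ forces $a_n + \bs_n(a_n) > -\infty$, hence $\mathfrak{a}^\mathfrak{m}_k + \mathfrak{s}^\mathfrak{m}_k(\mathfrak{a}^\mathfrak{m}_k) > -\infty$ too).

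The substantive part is the final claim: if $(\EE',\FF')$ is any D-subspace of $(\EE,\FF)$ with scale measure $\lambda_\bs$, then $(\mathfrak{E}^\mathfrak{m}, \mathfrak{F}^\mathfrak{m})$ is a D-subspace of $(\EE',\FF')$. By Theorem~\ref{thm2.1} again — and since the scale measures of $(\EE',\FF')$ and $(\mathfrak{E}^\mathfrak{m},\mathfrak{F}^\mathfrak{m})$ both equal $\lambda_\bs$, so condition (2) is automatic — it suffices to show the effective intervals $\{\mathtt{I}^\mathfrak{m}_k\}$ are coarser than the effective intervals $\{I'_m\}$ of $(\EE',\FF')$; that is, each $I'_m \subset \mathtt{I}^\mathfrak{m}_k$ for some $k$. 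The key point is that all the intervals in a single $I'_m$ must lie in one loose-scale-connection class $\mathfrak{I}_k$. Concretely, since $(\EE',\FF')$ is a D-subspace of $(\EE,\FF)$ with scale measure $\lambda_\bs$, Remark~\ref{RM32}(1) lets me group $\{I_n\}$ by which $I'_m$ contains them, and Remark~\ref{RM32}(1)'s nowhere-density statement \eqref{EQ3IKJ} says the $I_n$ inside a given $I'_m$ are topologically dense in it, so $I'_m \setminus \bigcup_{I_n \subset I'_m} I_n$ is nowhere dense. Then for $I_i, I_j \subset I'_m$: $\lambda_\bs([e_i,e_j]) \le \lambda_\bs(\overline{I'_m})$, which is finite because $\lambda_\bs = \lambda_{\mathfrak{s}'}$ and $\mathfrak{s}'$ (the scale function of $(\EE',\FF')$ on $I'_m$) being adapted forces any closed endpoint to carry finite scale — and an open endpoint of $I'_m$ is a limit of interior points, so $\lambda_\bs$ near it is a limit of finite masses; more carefully, $[e_i,e_j]$ is a compact subinterval of $\mathring{I'_m} \cup \{\text{closed endpoints}\}$ on which $\mathfrak{s}'$ is finite-valued and monotone, giving finiteness directly. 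And $[e_i,e_j]\setminus\bigcup_n I_n \subset [e_i,e_j]\setminus\bigcup_{I_n\subset I'_m} I_n$ is nowhere dense by the density just established. Hence $I_i$ and $I_j$ are loosely scale-connected, so they lie in the same $\mathfrak{I}_k$; since $I'_m$ is an interval covered up to a nowhere-dense set by the $I_n \subset I'_m$, all these $I_n$ lie in one $\mathfrak{I}_k$ and therefore $I'_m \subset \overline{\bigcup_{I_n \in \mathfrak{I}_k} I_n} \cap (\text{where } \mathfrak{s}^\mathfrak{m}_k \text{ finite}) \subset \mathtt{I}^\mathfrak{m}_k$, with the endpoint bookkeeping handled as before.

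The main obstacle I anticipate is precisely the endpoint analysis in this last step — making rigorous the claim $\lambda_\bs([e_i,e_j]) < \infty$ for $I_i, I_j \subset I'_m$ when $I'_m$ has an infinite open endpoint, and then verifying that the merged interval $\mathtt{I}^\mathfrak{m}_k$ genuinely contains all of $I'_m$ including its endpoints (one must check that whenever an endpoint of $I'_m$ is closed, the corresponding endpoint of $\mathtt{I}^\mathfrak{m}_k$ is closed as well, which again reduces to comparing $\mathfrak{a}^\mathfrak{m}_k + \mathfrak{s}^\mathfrak{m}_k(\mathfrak{a}^\mathfrak{m}_k)$ with the adaptedness of $\mathfrak{s}'$ on $I'_m$). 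These are the same kinds of boundary subtleties flagged in Example~\ref{EXA33} and in the necessity proof of Theorem~\ref{thm2.1}, so I would lean on \eqref{EQ3SKS}-style arguments and the adaptedness conditions $(\textbf{A}_R)$, $(\textbf{B}_R)$ to close the gaps.
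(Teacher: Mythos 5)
Your proposal is correct and follows essentially the same route as the paper: disjointness of the merged intervals via the Lemma~\ref{LM46} argument with loose in place of tight scale-connection, and the minimality claim by grouping the $I_n$ inside each $I'_m$, deducing from Remark~\ref{RM32}(1) and the finiteness of the scale function on compact subintervals of $I'_m$ that these are mutually loosely scale-connected, hence contained in a single equivalence class $\mathfrak{I}_k$, so $I'_m\subset \mathtt{I}^\mathfrak{m}_k$. The only (immaterial) difference is that the paper obtains the second assertion as the special case $(\EE',\FF')=(\EE,\FF)$ of the third rather than proving it separately.
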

\begin{proof}
Similar to Lemma~\ref{LM46}, we can deduce that $\{\mathtt{I}^\mathfrak{m}_k: k\geq 1\}$ are mutually disjoint. Thus $\{(\mathtt{I}^\mathfrak{m}_k, \mathfrak{s}^\mathfrak{m}_k): k\geq 1\}$ are effective intervals and clearly, its scale measure equals $\lambda_\bs$.

For the rest of assertion, it is enough to verify that $(\mathfrak{E}^\mathfrak{m}, \mathfrak{F}^\mathfrak{m})$ is a D-subspace of $(\EE',\FF')$.  
Let $\{I'_m: m\geq 1\}$ be the effective intervals of $(\EE',\FF')$. It suffices to prove that $\{\mathtt{I}^\mathfrak{m}_k: k\geq 1\}$ is coarser than $\{I'_m: m\geq 1\}$ by Theorem~\ref{thm2.1}. Mimicking Remark~\ref{RM32}~(1), set
\[
	\mathcal{I}_m:=\{I_n: I_n\subset I'_m\}.
\]
Since the scale measure of $(\EE',\FF')$ equals $\lambda_\bs$, it follows from Remark~\ref{RM32}~(1) that the intervals in $\mathcal{I}_m$ are loosely scale-connected with each other. Thus $\mathcal{I}_m\subset \mathfrak{I}_k$ for some $k$, which leads to the conclusion $I'_m\subset \mathtt{I}_k$. That completes the proof.
\end{proof}

\subsubsection{Optional interval-merge}
The maximal interval-merge, merging each equivalence class of loose scale-connection, $\mathfrak{J}_k$, into one interval,
gives the minimal D-subspace. However
the set of intervals in $\mathfrak{I}_k$ may be merged more optionally, so that it produces all possible D-subspaces under the same scale measure.

Let us prepare some ingredients to do other interval-merge operation on \eqref{EQ4IKK}. Fix $k\geq 1$.  
A point $x\in [\mathfrak{a}^\mathfrak{m}_k, \mathfrak{b}^\mathfrak{m}_k]$ is called a \emph{left} (resp. \emph{right}) \emph{pre-merging point} if either $x=a_n$ (resp. $x=b_n$) for some $I_n=\langle a_n, b_n\rangle\in \mathfrak{I}_k$ or $x\notin \bigcup_{J\in \mathfrak{I}_k}J$.  A pair of points $x$ and $y$, denoted by $\lfloor x, y \rfloor$, is called a \emph{pre-merging pair} of $\mathfrak{I}_k$ if $x$ is a left pre-merging point, $y$ is a right pre-merging point and $x<y$.  We say that two pre-merging pairs $\lfloor x_1, y_1\rfloor$ and $\lfloor x_2, y_2\rfloor$ are disjoint if $$[x_1, y_1]\cap [x_2, y_2]=\emptyset.$$  Given a pre-merging pair $\lfloor x, y \rfloor$, $\lambda_\bs$ induces a scale function $\mathfrak{s}$ on $(x,y)$ and set
\begin{equation}\label{EQ4IXY}
	\mathtt{I}:=\langle x, y\rangle,
\end{equation}
where $x\in \mathtt{I}$ if and only if $x+\mathfrak{s}(x)>-\infty$, $y\in \mathtt{I}$ if and only if $y+\mathfrak{s}(y)<\infty$. In abuse of terminology, we also call $(\mathtt{I}, \mathfrak{s})$ the merging of $\lfloor x, y\rfloor$ relative to $\lambda_\bs$.

\begin{remark}
The points in $(\mathfrak{a}^\mathfrak{m}_k, \mathfrak{b}^\mathfrak{m}_k)\setminus \left(\bigcup_{J\in \mathfrak{I}_k}J\right)$ are both left and right pre-merging points. The left endpoint $\mathfrak{a}^\mathfrak{m}_k$ of $\mathtt{I}^\mathfrak{m}_k$, possibly being $-\infty$, is a left pre-merging point, and the right endpoint $\mathfrak{b}^\mathfrak{m}_k$, possibly being $\infty$, is a right pre-merging point. Notice that the endpoints of intervals in $\mathfrak{I}_k$, except for $\mathfrak{a}^\mathfrak{m}_k$ and $\mathfrak{b}^\mathfrak{m}_k$, are all closed. Similarly, \eqref{EQ4IXY} must be closed when $x, y \in (\mathfrak{a}^\mathfrak{m}_k, \mathfrak{b}^\mathfrak{m}_k)$.
\end{remark}

We are now well prepared to present the \emph{optional interval-merge} on $\mathfrak{I}_k$ as follows. Take at most countable disjoint pre-merging pairs $\{\lfloor \mathfrak{a}^k_{p}, \mathfrak{b}^k_p\rfloor: 1\leq p \leq N_k\}$ ($N_k\leq \infty$) and denote their merging relative to $\lambda_\bs$ by $\{(\mathtt{I}^k_p, \mathfrak{s}^k_p): 1\leq p\leq N_k\}$. Let $\mathfrak{I}_k^p$ be the class of all the intervals $J\in \mathfrak{I}_k$ such that $J\subset \mathtt{I}^k_p$. Then the set
\begin{equation}\label{EQ4INS}
\left\{(I_n,\bs_n): I_n\in \mathfrak{I}_k \setminus \bigcup_{1\leq p\leq N_k} \mathfrak{I}_p^k\right\} \bigcup \{(\mathtt{I}^k_p, \mathfrak{s}^k_p): 1\leq p\leq N_k\}
\end{equation}
is what we obtain by an interval-merge operation on $\mathfrak{I}_k$, which is called an \emph{optional interval-merge}.

 Intuitively speaking, to do an optional interval-merge, we choose pre-merging pairs, then merge the intervals lying inside each pair into a new effective interval, and leave the intervals outside unchanged. In that way, we make effective intervals coarser. Particularly, the maximal interval-merge takes only one pre-merging pair $\lfloor \mathfrak{a}^\mathfrak{m}_k, \mathfrak{b}^\mathfrak{m}_k\rfloor$ and merges all the intervals in $\mathfrak{I}_k$ into $(\mathtt{I}^\mathfrak{m}_k, \mathfrak{s}^\mathfrak{m}_k)$.

\begin{definition}
Let $\{\mathfrak{I}_k: k\geq 1\}$ be the set of equivalence classes of $\{(I_n, \bs_n): n\geq 1\}$ induced by the loose scale-connection with respect to $\lambda_\bs$. An optional interval-merge operation on $\{(I_n, \bs_n): n\geq 1\}$ is to execute an optional interval-merge on each $\mathfrak{I}_k$ for any $k\geq 1$.
\end{definition}

The following proposition indicates that an optional interval-merge operation leads to a D-subspace.

\begin{proposition}\label{PRO417}
The set obtained by an optional interval-merge operation,
\begin{equation}\label{EQ4KIS}
	\bigcup_{k\geq 1}\left(\left\{(I_n,\bs_n): I_n\in \mathfrak{I}_k \setminus \bigcup_{1\leq p\leq N_k} \mathfrak{I}_p^k\right\} \bigcup \{(\mathtt{I}^k_p, \mathfrak{s}^k_p): 1\leq p\leq N_k\}\right)
\end{equation}
is a class of effective intervals, with scale measure $\lambda_\bs$. Furthermore, its associated Dirichlet form $(\mathfrak{E}, \mathfrak{F})$ is a D-subspace of $(\EE,\FF)$ on $L^2(\mathbb{R},m)$.
\end{proposition}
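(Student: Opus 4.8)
The plan is to verify the two defining conditions of Theorem~\ref{thm2.1} for the pair consisting of the set \eqref{EQ4KIS} and the original effective intervals $\{(I_n,\bs_n):n\ge 1\}$, after first checking that \eqref{EQ4KIS} really is a class of effective intervals with scale measure $\lambda_\bs$. The latter is essentially bookkeeping: the intervals left untouched already satisfy (E1) and (E2); each new interval $(\mathtt{I}^k_p,\mathfrak{s}^k_p)$ is adapted to itself by construction of $\mathtt{I}^k_p$ via \eqref{EQ4IXY} (the endpoint is included precisely when the induced scale function stays finite there); and disjointness follows because within a fixed $\mathfrak{I}_k$ the chosen pre-merging pairs $\lfloor \mathfrak{a}^k_p,\mathfrak{b}^k_p\rfloor$ are pairwise disjoint, while across different $k$ the ambient maximal intervals $\mathtt{I}^\mathfrak{m}_k$ are already mutually disjoint by Proposition~\ref{PRO414}. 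That the scale measure is unchanged is immediate, since each $\mathfrak{s}^k_p$ is by definition the increasing function induced by $\lambda_\bs$ restricted to $(\mathfrak{a}^k_p,\mathfrak{b}^k_p)$, and merging only reorganizes, without altering, the underlying measure: $\lambda_\bs$ charges no point, and the countable set $[\mathfrak{a}^k_p,\mathfrak{b}^k_p]\setminus\bigcup_n I_n$ (countable by Remark~\ref{RM44}(1), since loose scale-connection makes it nowhere dense and $\lambda_\bs$-finite) carries no mass.

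Next I would check condition (1) of Theorem~\ref{thm2.1}, that the new intervals form a coarser family. Each original $I_n$ either survives untouched in \eqref{EQ4KIS} — in which case it trivially sits inside itself — or lies in some $\mathfrak{I}^k_p$, and then by the very definition of $\mathfrak{I}^k_p$ we have $I_n\subset \mathtt{I}^k_p$. One small subtlety, exactly as in Proposition~\ref{PRO48}, is endpoint inclusion: if $I_n$ contains a finite endpoint that coincides with $\mathfrak{a}^k_p$ or $\mathfrak{b}^k_p$, one must argue that this endpoint also belongs to $\mathtt{I}^k_p$; but $I_n$ being adapted means $a_n+\bs_n(a_n)>-\infty$, hence $a_n$ is a finite $\lambda_\bs$-starting point and $\mathfrak{s}^k_p(\mathfrak{a}^k_p)>-\infty$, so $\mathfrak{a}^k_p\in \mathtt{I}^k_p$ by \eqref{EQ4IXY}. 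Thus $I_n\subset\mathtt{I}^k_p$ genuinely.

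Condition (2) is where the real content lies, and I expect the main obstacle to be verifying \eqref{EQ3DLS} across the "gap" set, i.e. showing $d\mathfrak{s}^k_p$ assigns no mass to $\mathtt{I}^k_p\setminus\bigcup_{n}I_n$. Here I would invoke the characterization in Remark~\ref{RM32}(2): it suffices to show (i) that the gap set $\mathtt{I}^k_p\setminus\bigcup_{J\in\mathfrak{I}^k_p}J$ has $d\mathfrak{s}^k_p$-measure zero, and (ii) that for each $I_n\in\mathfrak{I}^k_p$ the restricted scale function satisfies $\mathfrak{s}^k_p\ll\bs_n$ with Radon–Nikodym derivative $0$ or $1$. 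For (i), the gap set is contained in $[\mathfrak{a}^\mathfrak{m}_k,\mathfrak{b}^\mathfrak{m}_k]\setminus\bigcup_n I_n$, which is $\lambda_\bs$-null because it is nowhere dense (loose scale-connection) and $\lambda_\bs$ is supported on the closure of $\bigcup_n I_n$ — more precisely, on any open interval disjoint from all $I_n$ the measure $\lambda_\bs=\sum_n d\bs_n$ vanishes, and a nowhere dense set differs from such a union of gap-intervals only by countably many points, which $\lambda_\bs$ (atomless) ignores; since $d\mathfrak{s}^k_p$ is just $\lambda_\bs|_{(\mathfrak{a}^k_p,\mathfrak{b}^k_p)}$, (i) follows. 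For (ii), on the interior of each $I_n$ the measure $d\mathfrak{s}^k_p$ coincides exactly with $d\bs_n$ (both equal $\lambda_\bs$ there, and $\lambda_\bs|_{\mathring I_n}=d\bs_n$ because the $I_n$ are disjoint), so the derivative is identically $1$; the endpoints contribute nothing since $\lambda_\bs$ is atomless. Hence \eqref{EQ3DLS} holds with the derivative equal to $1$ $\lambda_\bs$-a.e. on $\bigcup_n I_n$ and the comparison set being $\lambda_\bs$-null. Having both conditions of Theorem~\ref{thm2.1} in hand, we conclude $(\mathfrak{E},\mathfrak{F})$ is a D-subspace of $(\EE,\FF)$, which completes the proof. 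The only genuinely delicate point throughout is the repeated but now routine use of the fact that loose scale-connection forces the relevant exceptional sets to be nowhere dense and $\lambda_\bs$-finite, hence (being cluster points of a disjoint interval family with closed endpoints) at most countable and therefore $\lambda_\bs$-null.
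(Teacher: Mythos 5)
Your overall strategy --- first check that \eqref{EQ4KIS} is a class of effective intervals with scale measure $\lambda_\bs$, then verify the two conditions of Theorem~\ref{thm2.1} --- is the same as the paper's, and most of the verification is sound. But there are two concrete problems. First, your disjointness check misses the only non-trivial case: an untouched interval $I_n\in\mathfrak{I}_k\setminus\bigcup_p\mathfrak{I}^k_p$ abutting a merged interval $\mathtt{I}^k_p$ inside the \emph{same} equivalence class. Pairwise disjointness of the pre-merging pairs only separates merged intervals from one another, and disjointness of the maximal intervals only separates different classes; neither rules out $b_n=\mathfrak{a}^k_p$ with $b_n\in I_n$ and $\mathfrak{a}^k_p\in\mathtt{I}^k_p$, in which case the two new intervals would overlap. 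The paper's proof of the first assertion is devoted precisely to excluding this: since the $I_n$ are mutually disjoint, a point $b_n\in I_n$ is not the left endpoint $a_m$ of any member of $\mathfrak{I}_k$ and lies in $\bigcup_{J\in\mathfrak{I}_k}J$, so by the definition of a left pre-merging point it cannot equal $\mathfrak{a}^k_p$. You need this step.

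Second, your repeated claim that $[\mathfrak{a}^k_p,\mathfrak{b}^k_p]\setminus\bigcup_n I_n$ is at most countable ``by Remark~\ref{RM44}(1), since loose scale-connection makes it nowhere dense and $\lambda_\bs$-finite'' is false: Remark~\ref{RM44}(1) goes the other way (countable implies nowhere dense), and under \emph{loose} scale-connection the gap set can be an uncountable Cantor-type set, exactly as in Example~\ref{EXA33} --- that is the whole point of distinguishing loose from tight scale-connection. Fortunately nothing essential is lost: the conclusion you actually need is that the gap set is $\lambda_\bs$-null, and this is immediate because $\lambda_\bs=\sum_n d\bs_n$ assigns mass only to subsets of $\bigcup_n I_n$; no countability, nowhere-density, or atomlessness argument is required. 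With these two repairs your proof coincides with the paper's.
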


\begin{proof}
For the first assertion, we only need to show the intervals in \eqref{EQ4KIS} are mutually disjoint. Denote all the intervals in \eqref{EQ4INS} by $\mathcal{A}_k$. Clearly, from the second condition in Definition~\ref{DEF42}, we  conclude that $J_1\in\mathcal{A}_k$ and $J_2\in \mathcal{A}_{k'}$ with $k\neq k'$ are disjoint. Now fix an integer $k$, and take $I_n, \mathtt{I}^k_p\in \mathcal{A}_k$. Suppose $a_n< \mathfrak{a}^k_p$ and $I_n\cap \mathtt{I}^k_p\neq \emptyset$. This implies that $b_n=\mathfrak{a}^k_p\in I_n\cap \mathtt{I}^k_p$. Since $\{I_n: n\geq 1\}$ are mutually disjoint, it follows from the definition of left pre-merging point that $\mathfrak{a}^k_p\notin \bigcup_{J\in \mathfrak{I}_k}J$, which contradicts $\mathfrak{a}^k_p\in I_n$.
For the second assertion, since the scale measure associated to \eqref{EQ4KIS} is equal to $\lambda_\bs$, it suffices to prove that \eqref{EQ4KIS} is coarser than $\{I_n:n\geq 1\}$. This fact is clear from the definition of \eqref{EQ4KIS}. That completes the proof.
\end{proof}

An example below is to explain the optional interval-merge operations on the Dirichlet form in Example~\ref{EXA33}. Recall the $\bs(x)=x-e$ on any interval $J$ is the natural scale function on $J$.

\begin{example}
Let $(\EE,\FF)$ be the Dirichlet form in Example~\ref{EXA33} with effective intervals $\{I_n: n\geq 1\}$ and natural scale function on each interval. Note that its scale measure is the Lebesgue measure on $\mathbb{R}$. Since the Cantor set $K$ is nowhere dense, $\{I_n: n\geq 1\}$ are loosely scale-connected to each other and thus there is only one equivalence class $\mathfrak{I}_1$ induced by the loose scale-connection.

In the following, we shall present several examples of pre-merging pairs. The maximal interval-merge corresponds to the pre-merging pair $\lfloor -\infty, \infty\rfloor$ ($N_1=1$). It merges all the intervals into the new one associated with the 1-dim Brownian motion. A trivial optional interval-merge operation is as follows: Let $N_1=\infty$ and set
\[
	\lfloor \mathfrak{a}^1_p, \mathfrak{b}^1_p\rfloor:= \lfloor a_p, b_p\rfloor,\quad p\geq 1.
\]
By this operation, effective intervals remain the same.

The pre-merging pair $\lfloor -\infty, b_n\rfloor$ for some $n\geq 1$ corresponds to operation, which merges all the intervals between $-\infty$ and $b_n$ into the new interval $(-\infty, b_n]$ with the natural scale function on it. Note that $b_n$ is not a left pre-merging point by the definition.  
In fact, once we take a pre-merging pair $\lfloor b_n, y\rfloor$, its merging must be $[b_n, y\rangle$ with natural scale function on it. Then $[b_n, y\rangle \cap I_n\neq \emptyset$ and thus breaks (E1).

Finally, if we take a point $y\in K\setminus \{0,1, a_n, b_n: n\geq 3\}$, $\lfloor -\infty, y\rfloor$ is also a pre-merging pair. Clearly, there exists a subsequence $\{I_{n_m}:m\geq 1\}$ such that both the endpoints $a_{n_m}\uparrow y$ and $b_{n_m}\uparrow y$ as $m\rightarrow \infty$. Thus the merging of $\lfloor -\infty, y\rfloor$ is $(-\infty, y]$ with the natural scale function on it. Furthermore, since the pre-merging pairs in an optional interval-merge are required to be disjoint, we know that $y$ cannot be either left or right pre-merging point in another pre-merging pair of the same optional interval-merge.
\end{example}

\subsection{Road map to D-subspaces}

We have already introduced two kinds of operations on effective intervals to obtain D-subspaces. Keep in mind that the scale-shrink operation essentially identifies a new scale measure, and the optional interval-merge operation does not change the scale measure. The main result of this section is the following theorem, which states that every D-subspace of $(\EE,\FF)$ can be obtained by firstly a scale-shrink operation to fix a scale measure and then an optional interval-merge operation to acquire wanted effective intervals.

\begin{theorem}\label{THM419}
Let $(\EE,\FF)$ and $(\mathfrak{E},\mathfrak{F})$ be two regular and strongly local Dirichlet forms on $L^2(\mathbb{R},m)$ having effective intervals $\{(I_n,\bs_n):n\geq 1\}$ and $\{(\mathtt{I}_k, \mathfrak{s}_k): k\geq 1\}$ respectively. Then $(\mathfrak{E}, \mathfrak{F})$ is a D-subspace of $(\EE,\FF)$ on $L^2(\mathbb{R},m)$ if and only if $\{(\mathtt{I}_k, \mathfrak{s}_k): k\geq 1\}$ is attained from $\{(I_n,\bs_n): n\geq 1\}$ by firstly a scale-shrink operation and then an optional interval-merge operation.
\end{theorem}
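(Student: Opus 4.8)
The plan is to prove the two implications separately. The ``if'' direction is short: it should follow from Propositions~\ref{PRO48} and~\ref{PRO417} together with transitivity of the D-subspace relation. The ``only if'' direction is where the work lies; there I will first recover the correct scale measure via a scale-shrink built from the Radon--Nikodym derivative $d\lambda_{\mathfrak{s}}/d\lambda_{\bs}$, and then realise the remaining coarsening as an optional interval-merge, the delicate point being that the resulting pre-merging pairs are genuinely disjoint.

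\textbf{Sufficiency.} Suppose $\{(\mathtt{I}_k,\mathfrak{s}_k):k\geq 1\}$ is obtained from $\{(I_n,\bs_n):n\geq 1\}$ by a scale-shrink operation followed by an optional interval-merge operation. Write $\{(\mathtt{J}_l,\mathfrak{t}_l):l\geq 1\}$ for the class of effective intervals produced by the scale-shrink step and $(\mathfrak{E}_0,\mathfrak{F}_0)$ for the Dirichlet form it determines via \eqref{EQ2FULR}. By Proposition~\ref{PRO48}, $(\mathfrak{E}_0,\mathfrak{F}_0)$ is a D-subspace of $(\EE,\FF)$. Applying Proposition~\ref{PRO417} with $(\mathfrak{E}_0,\mathfrak{F}_0)$ and $\{(\mathtt{J}_l,\mathfrak{t}_l):l\geq 1\}$ in the roles of $(\EE,\FF)$ and $\{(I_n,\bs_n):n\geq 1\}$, the optional interval-merge produces a class of effective intervals whose associated Dirichlet form is a D-subspace of $(\mathfrak{E}_0,\mathfrak{F}_0)$; by the uniqueness clause of Theorem~\ref{THM21} this form is exactly $(\mathfrak{E},\mathfrak{F})$. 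Since $\mathfrak{F}\subset\mathfrak{F}_0\subset\FF$ with the energies agreeing on the smaller spaces forces $\mathfrak{E}=\EE$ on $\mathfrak{F}$, the D-subspace relation is transitive, and we conclude that $(\mathfrak{E},\mathfrak{F})$ is a D-subspace of $(\EE,\FF)$.

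\textbf{Necessity: recovering the scale measure.} Now assume $(\mathfrak{E},\mathfrak{F})$ is a D-subspace of $(\EE,\FF)$. By Theorem~\ref{thm2.1}, $\{\mathtt{I}_k\}$ is coarser than $\{I_n\}$, $\lambda_{\mathfrak{s}}\ll\lambda_{\bs}$ and $d\lambda_{\mathfrak{s}}/d\lambda_{\bs}\in\{0,1\}$ $\lambda_{\bs}$-a.e.; put $B:=\{x:d\lambda_{\mathfrak{s}}/d\lambda_{\bs}(x)=1\}$, so that $d\lambda_{\mathfrak{s}}=1_B\,d\lambda_{\bs}$. For the key step of a scale-shrink I would set, on each $I_n$,
\[
\bar{\bs}_n(x):=\int_{e_n}^{x}1_B(y)\,d\bs_n(y),\qquad x\in I_n .
\]
This function is continuous, vanishes at $e_n$, and obviously satisfies \eqref{EQ4DSN}. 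The one point that is not automatic is that $\bar{\bs}_n$ is \emph{strictly} increasing, hence a bona fide scale function: if $\bar{\bs}_n$ were constant on a non-degenerate open interval $(c,d)\subset I_n$, then $\lambda_{\mathfrak{s}}((c,d))=\lambda_{\bs}((c,d)\cap B)=0$; but $(c,d)$ lies in the interior of the $\mathtt{I}_k$ containing $I_n$, on which the scale function $\mathfrak{s}_k$ is strictly increasing, so $\lambda_{\mathfrak{s}}((c,d))=d\mathfrak{s}_k((c,d))>0$, a contradiction. By construction $\lambda_{\bar{\bs}}=\sum_n 1_B\,d\bs_n=1_B\cdot\lambda_{\bs}=\lambda_{\mathfrak{s}}$, so a scale-shrink operation with this key step produces, by Lemma~\ref{LM46}, a class of effective intervals $\{(\mathtt{J}_l,\mathfrak{t}_l):l\geq 1\}$ with scale measure $\lambda_{\mathfrak{s}}$; call the associated Dirichlet form $(\mathfrak{E}_0,\mathfrak{F}_0)$, which is a D-subspace of $(\EE,\FF)$ by Proposition~\ref{PRO48}.

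\textbf{Necessity: the remaining coarsening is an optional interval-merge.} Since $(\mathfrak{E},\mathfrak{F})$ and $(\mathfrak{E}_0,\mathfrak{F}_0)$ are both D-subspaces of $(\EE,\FF)$ with the same scale measure $\lambda_{\mathfrak{s}}$, Proposition~\ref{PRO410} yields that $(\mathfrak{E},\mathfrak{F})$ is a D-subspace of $(\mathfrak{E}_0,\mathfrak{F}_0)$; in particular $\{\mathtt{I}_k\}$ is coarser than $\{\mathtt{J}_l\}$. It remains to exhibit $\{(\mathtt{I}_k,\mathfrak{s}_k):k\geq 1\}$ as the output of an optional interval-merge on $\{(\mathtt{J}_l,\mathfrak{t}_l):l\geq 1\}$, and this is the main obstacle. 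For each $k$ the collection $\{\mathtt{J}_l:\mathtt{J}_l\subset\mathtt{I}_k\}$ is nonempty and, by Remark~\ref{RM32}(1), topologically dense in $\mathtt{I}_k$; any two of these intervals are loosely scale-connected with respect to $\lambda_{\mathfrak{s}}$, since condition (2) of Definition~\ref{DEF42} holds because their midpoints bound a compact subinterval of $(\mathfrak{a}_k,\mathfrak{b}_k)$ and condition (1) holds again by Remark~\ref{RM32}(1); hence they all lie in one equivalence class of the loose scale-connection of $\{\mathtt{J}_l\}$. I would then check that the endpoints $\mathfrak{a}_k,\mathfrak{b}_k$ of $\mathtt{I}_k$ form a pre-merging pair of that class (a point of $\mathtt{I}_k$ outside $\bigcup_l\mathtt{J}_l$ is both a left and a right pre-merging point, and an endpoint of $\mathtt{I}_k$ that does meet some $\mathtt{J}_l$ must be the corresponding endpoint of that $\mathtt{J}_l$ by disjointness), and that the merging of $\lfloor \mathfrak{a}_k,\mathfrak{b}_k\rfloor$ relative to $\lambda_{\mathfrak{s}}$ is exactly $(\mathtt{I}_k,\mathfrak{s}_k)$, because the adaptedness prescriptions for the two endpoints use the same $\lambda_{\mathfrak{s}}$-induced scale function. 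The key subtlety is the disjointness of the chosen pre-merging pairs: if $\bar{\mathtt{I}}_k\cap\bar{\mathtt{I}}_{k'}\neq\emptyset$ for $k\neq k'$, then, $\mathtt{I}_k$ and $\mathtt{I}_{k'}$ being disjoint, the common boundary point is an open endpoint of at least one of them, where its adapted scale function blows up, so $\lambda_{\mathfrak{s}}$ has infinite mass in every neighbourhood of that point; consequently no $\mathtt{J}_l\subset\mathtt{I}_k$ is loosely scale-connected to any $\mathtt{J}_{l'}\subset\mathtt{I}_{k'}$, so the pre-merging pairs coming from $\mathtt{I}_k$ and from $\mathtt{I}_{k'}$ belong to different equivalence classes. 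Thus within each class the pairs $\lfloor\mathfrak{a}_k,\mathfrak{b}_k\rfloor$ have disjoint closures and form a legitimate optional interval-merge; since the $\mathtt{J}_l$'s inside a given $\mathtt{I}_k$ form a contiguous block and a $\mathtt{I}_k$ meeting only one $\mathtt{J}_l$ coincides with it, the set \eqref{EQ4KIS} produced by this optional interval-merge is precisely $\{(\mathtt{I}_k,\mathfrak{s}_k):k\geq 1\}$, which completes the plan.
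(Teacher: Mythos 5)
Your proposal is correct and follows essentially the same route as the paper: sufficiency via Propositions~\ref{PRO48} and~\ref{PRO417}, and necessity by inducing the new scale functions on each $I_n$ from $\lambda_{\mathfrak{s}}$, performing the minimal interval-merge, and then realising the endpoints $\lfloor\mathfrak{a}_k,\mathfrak{b}_k\rfloor$ as disjoint pre-merging pairs inside the loose scale-connection classes. You even supply two details the paper leaves implicit (strict monotonicity of $\bar{\bs}_n$, and disjointness of the pre-merging pairs via the blow-up of adapted scale functions at shared open endpoints), but the skeleton of the argument is the same.
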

\begin{proof}
The sufficiency follows from Propositions \ref{PRO48} and \ref{PRO417}. It suffices to prove the necessity. Recall that $I_n=\langle a_n, b_n\rangle$ and $\mathtt{I}_k=\langle \mathfrak{a}_k, \mathfrak{b}_k\rangle$. Denote the scale measure associated to $(\mathfrak{E},\mathfrak{F})$ by $\lambda_\mathfrak{s}$. Since $\{\mathtt{I}_k: k\geq 1\}$ is coarser than $\{I_n: n\geq 1\}$ by Theorem~\ref{thm2.1}, $\{I_n: n\geq 1\}$ may be divided into classes:
\[
	\mathfrak{I}_k:=\{I_n: I_n\subset \mathtt{I}_k, n\geq 1 \}, \ k\ge 1.
\]
We then proceed scale-shrink operation to make the scale measure be $\lambda_\mathfrak{s}$.
To execute the key step and minimal interval-merge of scale-shrink operation as stated in Definition~\ref{DEF48}, we will
obtain a class of effective intervals. More precisely, set $\bar{\bs}_n$ to be the scale function on $I_n$ induced by $\lambda_{\mathfrak{s}}$.  
Divide $\{I_n: n\geq 1\}$ into several equivalence classes induced by the tight scale-connection
 \[	\{\mathfrak{I}_p^1: p\geq 1\},\]
and then merge each $\mathfrak{I}^1_p$ into a new interval $\mathtt{I}^1_p$ with the scale function $\mathfrak{s}^1_p$ induced by $\lambda_\mathfrak{s}$ on $\mathtt{I}^1_p$. Thus we attain a new class of effective intervals
$\{(\mathtt{I}^1_p, \mathfrak{s}^1_p): p\geq 1\}$ with scale measure $\lambda_\mathfrak{s}$.

We now prepare to do an optional interval-merge on $\{(\mathtt{I}^1_p, \mathfrak{s}^1_p): p\geq 1\}$, which amounts
to picking a class of pre-merging pairs.
At first $\{\mathtt{I}^1_p: p\geq 1\}$ can be divided into equivalence classes induced by the loose scale-connection with respect to $\lambda_\mathfrak{s}$
\[
\{\mathcal{I}_q: q\geq 1 \},
\]
and define for each $q\geq 1$,
\[
	\mathfrak{I}^2_q:=\bigcup_{\mathtt{I}^1_p\in \mathcal{I}_q} \mathfrak{I}_p^1,
\]
which is a subset of $\{I_n:n\ge 1\}$.
It follows from Propositions~\ref{PRO410} and \ref{PRO414} that $\{\mathfrak{I}^1_p: p\geq 1\}$ is finer than $\{\mathfrak{I}_k: k\geq 1\}$ and $\{\mathfrak{I}_k:k\geq 1\}$ is finer than $\{\mathfrak{I}^2_q: q\geq 1\}$, in other words, for any $p\geq 1$, $\mathfrak{I}^1_p\subset \mathfrak{I}_k$ for some $k$, and for any $k\geq 1$, $\mathfrak{I}_k\subset \mathfrak{I}_q^2$ for some $q$.  We assert now that
\[
\{\lfloor \mathfrak{a}_k, \mathfrak{b}_k\rfloor: \mathfrak{I}_k\subset \mathfrak{I}^2_q \}
\]
is a class of disjoint pre-merging pairs of $\mathcal{I}_q$, which implies that $\{(\mathtt{I}_k,\mathfrak{s}_k):k\geq 1\}$ is attained from $\{(\mathtt{I}^1_p,\mathfrak{s}^1_p): p\geq 1\}$ by an optional interval-merge operation. In fact, we only need to show $\mathfrak{a}_k$ (resp. $\mathfrak{b}_k$) with $\mathfrak{I}_k\subset \mathfrak{I}^2_q$ is a left (resp. right) pre-merging point of $\mathcal{I}_q$. Note that
\[
	\mathfrak{a}_k = \inf\{x\in J: J\in \mathfrak{I}_k \}\geq \inf\{x\in J: J\in \mathfrak{I}^2_q\}=\inf\{x\in J: J\in \mathcal{I}_q\}
\]
and similarly, $\mathfrak{b}_k\leq \sup\{x\in J: J\in \mathcal{I}_q\}$.
When $\mathfrak{a}_k=\inf\{x\in J: J\in \mathcal{I}_q\}$, clearly $\mathfrak{a}_k$ is a left pre-merging point of $\mathcal{I}_q$. Now assume $\mathfrak{a}_k>\inf\{x\in J: J\in \mathcal{I}_q\}$, which implies  that $\mathfrak{a}_k\in\mathtt{I}_k$. Suppose $\mathfrak{a}_k\in \mathtt{I}^1_p:=\langle \mathfrak{a}^1_p, \mathfrak{b}^1_p\rangle$ but $\mathfrak{a}_k\neq \mathfrak{a}^1_p$. This indicates that $\mathfrak{a}_k>\mathfrak{a}^1_p$. Since $\mathtt{I}^1_p\subset \mathtt{I}_k$ or $\mathtt{I}^1_p\cap \mathtt{I}_k=\emptyset$ by Theorem~\ref{thm2.1}, it follows from $\mathfrak{a}_k\in \mathtt{I}_k\cap \mathtt{I}^1_p$ that $\mathtt{I}^1_p\subset \mathtt{I}_k$, which contradicts the fact $\mathfrak{a}_k>\mathfrak{a}^1_p$. Thus we conclude that $\mathfrak{a}_k$ is a left pre-merging point of $\mathcal{I}_q$. Similarly, we can deduce that $\mathfrak{b}_k$ is a right pre-merging point of $\mathcal{I}_q$. That completes the proof.
\end{proof}

We shall end this section by an interesting observation. All the D-subspaces of $(\EE,\FF)$ can be classified by possible scale measures, which are identified in the key step of scale-shrink operation. Let $\lambda_\mathfrak{s}$ be a scale measure satisfying \eqref{EQ3DLS} and $\mathscr{S}(\lambda_\mathfrak{s})$ the class of all the D-subspaces with the scale measure $\lambda_\mathfrak{s}$. Then the minimal interval-merge in Proposition~\ref{PRO410} corresponds to the largest D-subspace in $\mathscr{S}(\lambda_\mathfrak{s})$ and the maximal interval-merge in Proposition~\ref{PRO414} gives the smallest one. Optional interval-merge operations produce all other D-subspaces between them.

\section{D-subspaces generated by a class of functions}\label{SEC5}

In previous sections, we describe how we can obtain all possible D-subspaces from two operations on effective intervals. In this section, we shall come back to analyze how to construct a D-subspace from a particular function
and how it relates to the operations in the previous sections.

Fix a Dirichlet form $(\EE,\FF)$ with the effective intervals $\{(I_n, \bs_n):n\geq 1\}$ and scale measure $\lambda_\ss$.
Recall that $\mathbf{S}(\mathbb{R})$ is the family of all scale functions  on $\mathbb{R}$, i.e.
\[
\mathbf{S}(\mathbb{R})=\{\ff:\mathbb{R}\rightarrow \mathbb{R}\; |\; \ff\text{ is strictly increasing and continuous, }\ff(0)=0\}.
\]
Let $\ff\in\mathbf{S}(\mathbb{R})$ and denote its induced Radon measure by $\lambda_\ff$. Write
\begin{equation}\label{EQ5LFE}
\lambda_\ff^\mathrm{e}:= \lambda_\ff|_{\bigcup_{n\geq 1} I_n},\quad \lambda_\ff^\t:=\lambda_\ff|_{\left( \bigcup_{n\geq 1}I_n\right)^c}.
\end{equation}
(The superscript `$\e$' stands for `effective part' and `$\t$' stands for `trivial part'.)
Set $\ff(\mathbb{R}):=\{\ff(x):x\in \mathbb{R}\}$ and
\[
	\mathscr{C}_\ff:=C_c^\infty\circ \ff=\left\{\varphi\circ \ff: \varphi\in C_c^\infty(\ff(\mathbb{R}))\right\}.
\]
We shall impose the assumption $\mathscr{C}_\ff\subset \FF$ henceforth. Denote the $\EE_1$-closure of $\CC_\ff$ by $\mathfrak{F}$ and define
\[
	\mathfrak{E}(u,v):=\EE(u,v),\quad u,v\in \mathfrak{F}.
\]
Then it is easy to check that $(\mathfrak{E},\mathfrak{F})$ is a D-subspace of $(\EE,\FF)$.
This section is devoted to study this D-subspace.  Note that the special case with $\ff(x)=x$, the natural scale function, has been studied in \cite[\S3]{LY17}.

\subsection{Basic assumption}

The following lemma brings into play \eqref{EQ5LFE} and characterizes the basic assumption $\mathscr{C}_\ff\subset \FF$.

\begin{lemma}\label{LM51}
The condition $\mathscr{C}_\ff\subset \FF$ is satisfied if and only if
\begin{equation}\label{EQ5LEF}
	\lambda^\e_\ff\ll \lambda_\bs,\quad \frac{d\lambda^\e_\ff}{d\lambda_\bs}\in L^2_\mathrm{loc}(\mathbb{R}, \lambda_\bs).
\end{equation}
\end{lemma}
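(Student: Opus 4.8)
The plan is to read off membership in $\FF$ from the representation in Theorem~\ref{THM21}: $u\in L^2(\mathbb{R},m)$ lies in $\FF$ iff $u|_{I_n}\in\FF^{(\ss_n)}$ for every $n$ and $\sum_{n\ge 1}\EE^{(\ss_n)}(u|_{I_n},u|_{I_n})<\infty$. First I would dispose of the trivial parts for a generic $u=\varphi\circ\ff\in\mathscr{C}_\ff$. Since $\varphi\in C_c^\infty(\ff(\mathbb{R}))$ has compact support inside the \emph{open} interval $\ff(\mathbb{R})$ and $\ff$ is a homeomorphism of $\mathbb{R}$ onto $\ff(\mathbb{R})$, the function $u$ is bounded with compact support $\ff^{-1}(\operatorname{supp}\varphi)$ in $\mathbb{R}$; hence $u\in L^2(\mathbb{R},m)$, and $u$ vanishes near $\pm\infty$, so the conditions $u(a_n)=0$ under $(\text{L}_R)$ and $u(b_n)=0$ under $(\text{R}_R)$ in the definition \eqref{EQ2FSU2} of $\FF^{(\ss_n)}$ are automatic. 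Thus $\mathscr{C}_\ff\subset\FF$ reduces to: for every $\varphi$ and every $n$, $(\varphi\circ\ff)|_{I_n}\ll\bs_n$ and $\sum_n\int_{I_n}\bigl(\tfrac{d(\varphi\circ\ff)}{d\bs_n}\bigr)^2d\bs_n<\infty$.

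The computational heart is a chain rule: if $\ff|_{I_n}=h_n\circ\bs_n$ with $h_n$ absolutely continuous, then $u|_{I_n}=(\varphi\circ h_n)\circ\bs_n$, the composition $\varphi\circ h_n$ is absolutely continuous because $\varphi$ is Lipschitz, and $\tfrac{du}{d\bs_n}=(\varphi'\circ\ff)\cdot\tfrac{d\ff}{d\bs_n}$ on $I_n$. I would record this identity carefully, together with its converse direction: choosing $\varphi$ to equal the identity on a prescribed compact subset of $\ff(\mathbb{R})$ forces $\ff$ to be $\ll\bs_n$ on the corresponding portion of $I_n$. Also I would unwind the meaning of \eqref{EQ5LEF} in terms of the pieces $I_n$: since $\lambda_\bs$ is carried by $\overline{\bigcup_n I_n}$, with $\lambda_\bs|_{I_n}=d\bs_n$ and $\lambda_\ff^\e|_{I_n}=\lambda_\ff|_{I_n}$, and since $\lambda_\ff$ charges no point, $\lambda_\ff^\e\ll\lambda_\bs$ is equivalent to $\ff|_{I_n}\ll\bs_n$ for every $n$, with $\tfrac{d\ff}{d\bs_n}=\bigl(\tfrac{d\lambda_\ff^\e}{d\lambda_\bs}\bigr)\big|_{I_n}$.

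For sufficiency, assume \eqref{EQ5LEF}; then each $\ff|_{I_n}\ll\bs_n$, so by the chain rule $u|_{I_n}\ll\bs_n$. With $K:=\ff^{-1}(\operatorname{supp}\varphi)$ compact, $\varphi'\circ\ff$ vanishes off $K$, hence
\[
\EE(u,u)=\tfrac12\sum_{n\ge 1}\int_{I_n\cap K}(\varphi'\circ\ff)^2\Bigl(\tfrac{d\ff}{d\bs_n}\Bigr)^2 d\bs_n\le\tfrac12\|\varphi'\|_\infty^2\int_K\Bigl(\tfrac{d\lambda_\ff^\e}{d\lambda_\bs}\Bigr)^2 d\lambda_\bs<\infty
\]
by the local square-integrability; since each summand is finite we also get $u|_{I_n}\in\FF^{(\ss_n)}$, so $u\in\FF$. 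For necessity, fix a compact interval $[\alpha,\beta]$ and pick $\varphi\in C_c^\infty(\ff(\mathbb{R}))$ with $\varphi(y)=y$ (hence $\varphi'\equiv 1$) on the compact subinterval $\ff([\alpha,\beta])$ of $\ff(\mathbb{R})$, e.g. $\varphi(y)=y\chi(y)$ for a suitable smooth cutoff $\chi$. Then $u=\varphi\circ\ff\in\FF$, so $u|_{I_n}\in\FF^{(\ss_n)}$; since $u=\ff$ on $I_n\cap[\alpha,\beta]$ this gives $\ff\ll\bs_n$ there with $\tfrac{d\ff}{d\bs_n}=\tfrac{du}{d\bs_n}$. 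Exhausting $\mathbb{R}$ by such intervals yields $\ff|_{\mathring I_n}\ll\bs_n$ for all $n$, hence $\lambda_\ff^\e\ll\lambda_\bs$, and
\[
\int_{[\alpha,\beta]}\Bigl(\tfrac{d\lambda_\ff^\e}{d\lambda_\bs}\Bigr)^2 d\lambda_\bs=\sum_{n\ge 1}\int_{I_n\cap[\alpha,\beta]}\Bigl(\tfrac{du}{d\bs_n}\Bigr)^2 d\bs_n\le 2\sum_{n\ge 1}\EE^{(\ss_n)}(u|_{I_n},u|_{I_n})=2\EE(u,u)<\infty,
\]
so $\tfrac{d\lambda_\ff^\e}{d\lambda_\bs}\in L^2_{\mathrm{loc}}(\mathbb{R},\lambda_\bs)$ as $[\alpha,\beta]$ is arbitrary.

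I expect the main obstacle to be the bookkeeping around absolute continuity: justifying the chain-rule identity $\tfrac{d(\varphi\circ\ff)}{d\bs_n}=(\varphi'\circ\ff)\tfrac{d\ff}{d\bs_n}$ rigorously via $\ff=h_n\circ\bs_n$, and passing cleanly from ``$\ff\ll\bs_n$ on every compact subinterval of $\mathring I_n$'' (together with finiteness of $\ff$ at the adapted endpoints of $I_n$) to the global statement $\lambda_\ff^\e\ll\lambda_\bs$ with the stated Radon--Nikodym derivative. The device of choosing the test function $\varphi$ equal to the identity on a prescribed compact set, although elementary, is what makes the necessity direction go through and should be isolated explicitly.
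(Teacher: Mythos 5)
Your proof is correct and follows essentially the same route as the paper: the sufficiency is the identical chain-rule computation $2\EE(\varphi\circ\ff,\varphi\circ\ff)=\int_{\mathbb{R}}(\varphi'\circ\ff)^2\bigl(\tfrac{d\lambda_\ff^{\mathrm{e}}}{d\lambda_\ss}\bigr)^2d\lambda_\ss$, and your necessity argument (test functions equal to the identity on $\ff([\alpha,\beta])$) is just an explicit unpacking of the paper's one-line observation that $\mathscr{C}_\ff\subset\FF$ forces $\ff\in\FF_{\mathrm{loc}}$. The extra bookkeeping you supply (compact support, the $(\text{L}_R)/(\text{R}_R)$ boundary conditions, piecing together absolute continuity over the $I_n$) is sound and fills in details the paper omits.
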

\begin{proof}
For the sufficiency, on account of $\mathscr{C}_\ff\subset L^2(\mathbb{R})$ it suffices to show $\varphi\circ \ff\in \FF$ and $\EE(\varphi\circ \ff, \varphi\circ \ff)<\infty$ for any $\varphi\in C_c^\infty(\ff(\mathbb{R}))$. Indeed, $\varphi\circ \ff|_{I_n} \ll \ss_n$ by $\lambda^\e_\ff\ll \lambda_\bs$, and
\[
\begin{aligned}
	2\EE(\varphi\circ \ff, \varphi\circ \ff)&=\sum_{n\geq 1} \int_{I_n} \left(\varphi'\circ \ff\right)^2\left(\frac{d\ff}{d\ss_n}\right)^2d\ss_n \\
	&=\int_\mathbb{R} \left(\varphi'\circ \ff\right)^2\left( \frac{d\lambda^\e_\ff}{d\lambda_\bs}\right)^2d\lambda_\bs.
\end{aligned}
\]
Then \eqref{EQ5LEF} implies $\EE(\varphi\circ \ff, \varphi\circ \ff)<\infty$.

To the contrary, we need only to note $\mathscr{C}_\ff\subset \FF$ implies that $\ff\in \FF_\mathrm{loc}$ and then \eqref{EQ5LEF} follows from the expression of $(\EE,\FF)$. That completes the proof.
\end{proof}

\subsection{Scale measure and optional interval-merge}\label{SEC52}

Clearly, $(\mathfrak{E}, \mathfrak{F})$ is a regular and strongly local Dirichlet form on $L^2(\mathbb{R},m)$. Thus it can be represented by another class of effective intervals $$\{(\mathtt{I}_k, \mathfrak{s}_k): k\geq 1\}.$$ Its scale measure is denoted by $\lambda_\mathfrak{s}$ as before. It is said in Theorem~\ref{THM419} that $\{(\mathtt{I}_k, \mathfrak{s}_k): k\geq 1\}$ is derived from $\{(I_n, \bs_n):n\geq 1\}$ by firstly a scale-shrink operation and then an optional interval-merge. In this subsection, we shall identify the expected scale measure $\lambda_\s$ and describe briefly the optional interval-merge to attain $(\mathfrak{E}, \mathfrak{F})$.  
The proof is postponed to Theorem~\ref{THM56} in next subsection.  

We shall first formulate the scale measure $\lambda_\mathfrak{s}$. Since $\lambda_\ss$ and $\lambda^\e_\ff$ are $\sigma$-finite on $\mathbb{R}$, we have the following Lebesgue-Radon-Nikodym decomposition:
\begin{equation}\label{EQ5LSR}
	\lambda_\ss=\bar{\lambda}+\kappa=g\cdot \lambda^\e_\ff +\kappa,
\end{equation}
where $\kappa$ is singular with respect to  $\lambda^\e_\ff$. The crucial fact we will prove later is that $\lambda_\s$ coincides with the absolute part, i.e.
\begin{equation}\label{EQ5LSL}
	\lambda_\s=\bar{\lambda}\quad (=g\cdot \lambda^\e_\ff).
\end{equation}
Before moving on to optional interval-merge, let us explain the easy part, why $\bar{\lambda}$ is a shrinking of $\lambda_\ss$, or it actually induces a proper scale function on each $I_n$.  

\begin{lemma}\label{LM52}
Let $\bar{\lambda}$ be in \eqref{EQ5LSR}. Then for each $n\geq 1$, $\bar{\lambda}|_{I_n}$ induces a scale function $\bar{\ss}_n\in \mathbf{S}(I_n)$ satisfying \eqref{EQ4DSN}.
\end{lemma}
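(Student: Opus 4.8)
The plan is to work one effective interval at a time and reduce everything to the uniqueness of the Lebesgue--Radon--Nikodym decomposition on $I_n$ relative to $d\bs_n$. By the basic assumption $\CC_\ff\subset\FF$ and Lemma~\ref{LM51} we have $\lambda_\ff^\e\ll\lambda_\bs$, so on each $I_n$ we may write $\lambda_\ff^\e|_{I_n}=h_n\cdot d\bs_n$ with $h_n:=\bigl(d\lambda_\ff^\e/d\lambda_\bs\bigr)|_{I_n}\ge 0$. Set $A_n:=\{x\in I_n:h_n(x)>0\}$. The first step is to identify $\bar\lambda|_{I_n}$ explicitly: on $A_n$ one has $d\bs_n=h_n^{-1}\,d\lambda_\ff^\e$, so $1_{A_n}\cdot d\bs_n\ll\lambda_\ff^\e$; on $I_n\setminus A_n$ the measure $\lambda_\ff^\e$ vanishes, so $1_{I_n\setminus A_n}\cdot d\bs_n\perp\lambda_\ff^\e$. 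Since $\lambda_\bs|_{I_n}=d\bs_n=1_{A_n}\cdot d\bs_n+1_{I_n\setminus A_n}\cdot d\bs_n$ and the Lebesgue decomposition restricts to Borel subsets, uniqueness of the decomposition \eqref{EQ5LSR} forces
\[
\bar\lambda|_{I_n}=1_{A_n}\cdot d\bs_n,\qquad \kappa|_{I_n}=1_{I_n\setminus A_n}\cdot d\bs_n.
\]

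Granting this identity, the density statement \eqref{EQ4DSN} is immediate: $\bar\lambda|_{I_n}\ll d\bs_n$ with $d\bar\lambda/d\bs_n=1_{A_n}\in\{0,1\}$ $d\bs_n$-a.e. It then remains to check that $\bar\lambda|_{I_n}$ genuinely induces an element of $\mathbf{S}(I_n)$, i.e. that $\bar\bs_n(x):=\int_{e_n}^x 1_{A_n}\,d\bs_n$ is locally finite, continuous and strictly increasing on $I_n$ (after which we normalize by $\bar\bs_n(e_n)=0$). Local finiteness follows from $\bar\lambda|_{I_n}\le d\bs_n$ and the fact that $d\bs_n$ is Radon on $I_n$; continuity follows because $\bar\lambda|_{I_n}\le d\bs_n$ charges no point, $\bs_n$ being continuous. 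These two points are purely formal.

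The one genuinely non-formal step — the place where I expect to use that $\ff$ is an honest scale function and not merely a measure-inducing increasing map — is strict monotonicity, equivalently that $\bar\lambda|_{I_n}$ has full support in $I_n$. Here I will invoke that $\ff$ is continuous and strictly increasing, so $\lambda_\ff$ has full support on $\mathbb{R}$: for every nonempty open interval $(c,d)\subset I_n$,
\[
0<\ff(d)-\ff(c)=\lambda_\ff\bigl((c,d)\bigr)=\int_{(c,d)}h_n\,d\bs_n=\int_{(c,d)\cap A_n}h_n\,d\bs_n,
\]
which forces $d\bs_n\bigl((c,d)\cap A_n\bigr)>0$, hence $\bar\lambda\bigl((c,d)\bigr)=\bar\lambda|_{I_n}\bigl((c,d)\bigr)=d\bs_n\bigl((c,d)\cap A_n\bigr)>0$. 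Thus $\bar\bs_n$ is strictly increasing, and so $\bar\bs_n\in\mathbf{S}(I_n)$ satisfies \eqref{EQ4DSN}, as claimed. The only bookkeeping that needs care throughout is that the Lebesgue decomposition of $\lambda_\bs$ with respect to $\lambda_\ff^\e$ restricts correctly to each disjoint interval $I_n$ and there coincides with the decomposition taken relative to $d\bs_n$; no heavy computation is involved.
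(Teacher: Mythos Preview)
Your proof is correct and follows essentially the same approach as the paper. The paper works globally with the density $g=d\lambda_\bs/d\lambda_\ff^\e$ and the singularity set $H$ (so that $\bar\lambda=1_H\cdot\lambda_\bs$), showing $g>0$ $\lambda_\ff^\e$-a.e.\ via $\lambda_\ff^\e\ll\lambda_\bs$; you instead work interval-by-interval with the dual density $h_n=d\lambda_\ff^\e/d\bs_n$ and identify $\bar\lambda|_{I_n}=1_{\{h_n>0\}}\cdot d\bs_n$ by uniqueness of the Lebesgue decomposition, then use the strict increase of $\ff$ to get full support --- but these are two sides of the same coin.
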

\begin{proof}
Clearly, $\bar{\lambda}\ll \lambda_\ss$, $$\frac{d\bar{\lambda}}{d\lambda_\ss}=0\ \text{or}\ 1, $$ $\lambda_\ss$-a.e. and $\bar{\lambda}(\{x\})=0$ for any $x\in \bigcup_{n\geq 1}I_n$. It suffices to show that $\bar{\lambda}$ is fully supported on $I_n$. Since $\lambda^\e_\ff$ is fully supported on $I_n$, this amounts to $g>0$, $\lambda^\e_\ff$-a.e. Let $H$ be a measurable subset of $\mathbb{R}$ such that $\kappa(H)=\lambda^\e_\ff(H^c)=0$. Write
\[
	Z_g:=\{x\in H: g(x)=0\}.
\]
Then $\lambda_\ss(Z_g)=\bar{\lambda}(Z_g)=0$, and it follows from \eqref{EQ5LEF} that $\lambda^\e_\ff(Z_g)=0$. That completes the proof.
\end{proof}

We now move to optional interval-merge, which depends on the equivalence classes induced by  $\ff$. 

\begin{definition}\label{DEF53}
We say that $I_i$ and $I_j$ are $\ff$-scale-connected, or $I_i$ is $\ff$-scale-connected to $I_j$, with respect to the scale measure $\bar{\lambda}$,
if
\begin{itemize}
\item[(1)] $\lambda^\t_\ff([e_i,e_j])=0$, where $\lambda^\t_\ff$ is given by \eqref{EQ5LFE};
\item[(2)] $\bar{\lambda}([e_i,e_j])<\infty$.
\end{itemize}
\end{definition}
\begin{remark}\label{RM54}
In the case of $\ff$ being the natural scale function, the simpler terminology `scale-connection' was used in \cite[Definition~3.5]{LY17}  instead.
It is seen that the second condition in Definition~\ref{DEF53} is related to the scale measure, just as in the definition of tight scale-connection and loose scale-connection.
Needless to say, under the same scale measure, tight scale-connection implies $\ff$-scale-connection, and $\ff$-scale-connection implies loose scale-connection. 
\end{remark}

Clearly, $\ff$-scale-connection is also an equivalence relation on $\{I_n:n\geq 1\}$. Thus these intervals may be divided into equivalence classes as usual:
\[
	\{\bar{\mathfrak{I}}_k: k\geq 1\},
\]
where $\bar{\mathfrak{I}}_k\subset \{I_n:n\geq 1\}$, in which the intervals are mutually $\ff$-scale-connected. Then we merge each group $\bar{\mathfrak{I}}_k$ under the scale measure $\bar{\lambda}$ just as the procedures above Remark~\ref{RM45} into an interval $\bar{\mathtt{I}}_k$ and {obtain the merging  $(\bar{\mathtt{I}}_k, \bar{\s}_k)$ of $\bar{\mathfrak{I}}_k$. We call this operation the $\ff$-interval-merge.} 

Mimicking Lemma~\ref{LM46}, we have the following lemma. The proof is analogical and omitted.

\begin{lemma}\label{LM55}
The sequence $\{(\bar{\mathtt{I}}_k, \bar{\s}_k): k\geq 1\}$ is a class of effective intervals with the scale measure $\bar{\lambda}$.
\end{lemma}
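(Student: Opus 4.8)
The plan is to verify the two properties of Lemma~\ref{LM55} by following exactly the scheme used for scale-shrink operation, with tight scale-connection replaced by $\ff$-scale-connection and $\lambda_\bs$ replaced by $\bar\lambda$. First I would note that, by Lemma~\ref{LM52}, the family $\{(I_n,\bar\ss_n):n\ge 1\}$ is a class of pre-effective intervals whose scale measure is precisely $\bar\lambda$ (each $\bar\ss_n$ satisfies \eqref{EQ4DSN}, so $\bar\lambda=\sum_n d\bar\ss_n$). So the merging procedure above Remark~\ref{RM45} applies verbatim to the classes $\{\bar{\mathfrak{I}}_k:k\ge1\}$: for each $k$ set $\bar{\mathfrak{a}}_k:=\inf\{x\in J:J\in\bar{\mathfrak{I}}_k\}$, $\bar{\mathfrak{b}}_k:=\sup\{x\in J:J\in\bar{\mathfrak{I}}_k\}$, let $\bar\s_k$ be the scale function on $(\bar{\mathfrak{a}}_k,\bar{\mathfrak{b}}_k)$ induced by $\bar\lambda|_{(\bar{\mathfrak{a}}_k,\bar{\mathfrak{b}}_k)}$, and $\bar{\mathtt{I}}_k:=\langle\bar{\mathfrak{a}}_k,\bar{\mathfrak{b}}_k\rangle$ with each finite endpoint included iff $\bar\s_k$ is finite there. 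This automatically makes (E2), adaptedness, hold, and gives $\lambda_{\bar\s}=\sum_k d\bar\s_k=\bar\lambda$ once the intervals are known to be disjoint, since the $\bar{\mathtt{I}}_k$ exhaust $\bigcup_n I_n$ up to a $\bar\lambda$-null set (the gap sets $[e_i,e_j]\setminus\bigcup_n I_n$ are $\lambda^\t_\ff$-null and $\bar\lambda\ll\lambda^\e_\ff$, hence $\bar\lambda$ ignores them, and single points are $\bar\lambda$-null).

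The substance is therefore to prove $\bar{\mathtt{I}}_i\cap\bar{\mathtt{I}}_j=\emptyset$ for $i\ne j$, which is the exact analogue of Lemma~\ref{LM46}. I would argue by contradiction: suppose $\bar{\mathtt{I}}_1\cap\bar{\mathtt{I}}_2\ne\emptyset$, so necessarily $\bar{\mathfrak{b}}_1=\bar{\mathfrak{a}}_2=:c$ and this common endpoint lies in both intervals (forced by the inclusion rule for the endpoints). Pick $I_i\in\bar{\mathfrak{I}}_1$ and $I_j\in\bar{\mathfrak{I}}_2$; I claim $I_i$ is $\ff$-scale-connected to $I_j$, contradicting that they lie in distinct equivalence classes. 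For condition (2) of Definition~\ref{DEF53}: because $c\in\bar{\mathtt{I}}_1$ we have $\bar\s_1(c)$ finite, i.e. $\bar\lambda([e_i,c])<\infty$, and likewise $\bar\lambda([c,e_j])<\infty$, so $\bar\lambda([e_i,e_j])<\infty$. For condition (1): one takes increasing intervals $\{I_{p_m}\}\subset\bar{\mathfrak{I}}_1$ with $I_{p_1}=I_i$ and $a_{p_m},b_{p_m}\uparrow c$, and decreasing intervals $\{I_{q_m}\}\subset\bar{\mathfrak{I}}_2$ with $I_{q_1}=I_j$ and $a_{q_m},b_{q_m}\downarrow c$ — such sequences exist precisely because $c=\bar{\mathfrak{b}}_1=\bar{\mathfrak{a}}_2$ — and then, since each of $\bar{\mathfrak{I}}_1,\bar{\mathfrak{I}}_2$ is an $\ff$-scale-connected cluster, each consecutive gap $[b_{p_m},a_{p_{m+1}}]\setminus\bigcup_n I_n$ and $[b_{q_{m+1}},a_{q_m}]\setminus\bigcup_n I_n$ is $\lambda^\t_\ff$-null, as is the possibly-nontrivial gap surrounding $c$ itself; summing the countably many contributions gives $\lambda^\t_\ff([e_i,e_j])=0$. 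This establishes the $\ff$-scale-connection of $I_i$ and $I_j$ and yields the contradiction.

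The only mildly delicate point — and the place I would expect to spend the most care — is making sure the chain-and-limit argument in the last paragraph genuinely covers the whole interval $[e_i,e_j]$, including the single accumulation point $c$ and the residual intervals of $\bar{\mathfrak{I}}_1$ (resp. $\bar{\mathfrak{I}}_2$) sitting to the left of $e_i$ (resp. right of $e_j$); these are handled by the same observation, since within a fixed equivalence class the union of gap sets between any two of its members is $\lambda^\t_\ff$-null by transitivity of $\ff$-scale-connection and countable subadditivity of $\lambda^\t_\ff$. Everything else is a transcription of the scale-shrink arguments, so I would simply say the proof is "analogical" to Lemma~\ref{LM46} and omit the routine repetitions, exactly as the statement of Lemma~\ref{LM55} anticipates.
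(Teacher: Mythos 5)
Your argument is correct and is precisely the ``analogical'' proof the paper omits: the paper simply refers to Lemma~\ref{LM46}, and you carry out that same contradiction argument with tight scale-connection replaced by $\ff$-scale-connection, the countability of the gap set replaced by $\lambda^\t_\ff$-nullity (handled by countable subadditivity and the continuity of $\ff$ at the accumulation point), and $\lambda_{\bar{\bs}}$ replaced by $\bar{\lambda}$. Nothing further is needed.
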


Not surprisingly, we shall conclude
\begin{equation}\label{EQ5IKS}
	\{(\mathtt{I}_k, \mathfrak{s}_k): k\geq 1\}=\{(\bar{\mathtt{I}}_k, \bar{\s}_k): k\geq 1\}.
\end{equation}
That is to say, $\ff$-interval-merge gives the D-subspace $(\mathfrak{E},\mathfrak{F})$ directly,  
without seeking the minimal interval-merge. This means that $\ff$-interval-merge combines the minimal interval-merge and an optional interval-merge together.
It is worth noting and not hard to see that  $\ff$-interval-merge is
\begin{itemize}
\item[(a)] identified with the minimal interval-merge, if and only if once $\bar{\lambda}([e_i,e_j])<\infty$ for $i\neq j$,
\begin{equation}\label{EQ5LTF}
	\lambda^\t_\ff\left([e_i,e_j]\right)=0\Rightarrow[e_i,e_j]\setminus \left(\bigcup_{n\geq 1}I_n\right) \text{ is of at most countable points};
\end{equation}
\item[(b)] identified with the maximal interval-merge, if and only if once $\bar{\lambda}([e_i,e_j])<\infty$ for $i\neq j$,
\begin{equation}\label{EQ5EIE}
[e_i,e_j]\setminus \left(\bigcup_{n\geq 1}I_n\right) \text{ is nowhere dense} \Rightarrow \lambda^\t_\ff\left([e_i,e_j]\right)=0.
\end{equation}
\end{itemize}

\subsection{D-subspace generated by $\CC_\ff$}

We are now in a position to phrase and prove the main theorem of this section.

\begin{theorem}\label{THM56}
Assume that $\CC_\ff\subset \FF$, equivalently \eqref{EQ5LEF} holds. Let $(\mathfrak{E},\mathfrak{F})$ be the D-subspace of $(\EE,\FF)$ generated by $\CC_\ff$, and $(\bar{\mathfrak{E}},\bar{\mathfrak{F}})$ the Dirichlet form represented by effective intervals $\{(\bar{\mathtt{I}}_k, \bar{\s}_k): k\geq 1\}$ in Lemma~\ref{LM55}. Then
\[
	(\mathfrak{E},\mathfrak{F})=(\bar{\mathfrak{E}},\bar{\mathfrak{F}}).
\]
\end{theorem}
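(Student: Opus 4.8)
The plan is to establish the two inclusions $\mathfrak{F}\subset\bar{\mathfrak{F}}$ and $\bar{\mathfrak{F}}\subset\mathfrak{F}$ together with the identity of the forms on the common domain; since both are D-subspaces of $(\EE,\FF)$ it then follows from Theorem~\ref{thm2.1} (or from the uniqueness of the effective-interval representation in Theorem~\ref{THM21}) that they coincide. By Lemma~\ref{LM55} the form $(\bar{\mathfrak{E}},\bar{\mathfrak{F}})$ has scale measure $\bar\lambda$ and effective intervals $\{(\bar{\mathtt{I}}_k,\bar\s_k)\}$ obtained by $\ff$-interval-merge; by Proposition~\ref{PRO417} it is a D-subspace of $(\EE,\FF)$. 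So the content of the theorem is really: (i) the scale measure of $(\mathfrak{E},\mathfrak{F})$ equals $\bar\lambda=g\cdot\lambda^\e_\ff$, the $\lambda^\e_\ff$-absolutely-continuous part of $\lambda_\ss$ in \eqref{EQ5LSR}; and (ii) the effective intervals of $(\mathfrak{E},\mathfrak{F})$ are exactly the $\ff$-scale-connection classes merged under $\bar\lambda$.

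First I would show $(\mathfrak{E},\mathfrak{F})$ is a D-subspace of $(\bar{\mathfrak{E}},\bar{\mathfrak{F}})$, i.e.\ that $\CC_\ff\subset\bar{\mathfrak{F}}$ with matching energies, since $\mathfrak{F}$ is by definition the $\EE_1$-closure of $\CC_\ff$ and $\bar{\mathfrak{F}}$ is $\bar{\mathfrak{E}}_1$-closed with $\bar{\mathfrak{E}}=\EE$ on its domain. Fix $\varphi\in C_c^\infty(\ff(\mathbb{R}))$ and $\psi:=\varphi\circ\ff$. On each $\bar{\mathtt{I}}_k$ one checks $\psi\ll\bar\s_k$: on the effective subintervals $I_n\subset\bar{\mathtt{I}}_k$ this is the computation already in the proof of Lemma~\ref{LM51}, using $\lambda^\e_\ff\ll\lambda_\ss$ and the defining relation $d\bar\s_n/d\lambda_\ss\in\{0,1\}$; on $\bar{\mathtt{I}}_k\setminus\bigcup I_n$ one uses condition~(1) of Definition~\ref{DEF53}, $\lambda^\t_\ff([e_i,e_j])=0$, which forces $\ff$ — hence $\psi$ — to be locally constant there, so it contributes nothing to $d\bar\s_k$ and is consistent with $\psi\ll\bar\s_k$. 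The energy identity $\bar{\mathfrak{E}}(\psi,\psi)=\sum_k\tfrac12\int_{\bar{\mathtt{I}}_k}(d\psi/d\bar\s_k)^2 d\bar\s_k=\tfrac12\int_\mathbb{R}(\varphi'\circ\ff)^2 g\, d\lambda^\e_\ff=\EE(\psi,\psi)$ then drops out of the change-of-variables already performed in Lemma~\ref{LM51} (with $\lambda_\bs$ replaced by $\bar\lambda$ via $\bar\lambda=g\cdot\lambda^\e_\ff$ and $\lambda^\e_\ff\ll\bar\lambda$ on the effective part). The only delicate point is boundary behaviour of $\psi$ at the endpoints of the $\bar{\mathtt{I}}_k$ that carry an $(\mathrm{L}_R)$/$(\mathrm{R}_R)$-type constraint, which I would handle by noting $\psi$ has compact support after composition and that $\bar\lambda([e_i,e_j])<\infty$ is exactly condition~(2) of Definition~\ref{DEF53}.

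For the reverse inclusion $\bar{\mathfrak{F}}\subset\mathfrak{F}$, the strategy is to produce a core: I would show that $\CC_\ff$ is $\bar{\mathfrak{E}}_1$-dense in $\bar{\mathfrak{F}}$, so that its $\EE_1$-closure $\mathfrak{F}$ contains all of $\bar{\mathfrak{F}}$. Because $\bar{\mathfrak{F}}$ splits as an $\ell^2$-direct sum over the effective intervals $\{(\bar{\mathtt{I}}_k,\bar\s_k)\}$ (the structure \eqref{EQ2FULR}), it suffices to approximate, on each $\bar{\mathtt{I}}_k$ separately and with control of the tail sum, functions of the form $u\ll\bar\s_k$ of finite $\bar\s_k$-energy, compactly supported in the interior. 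Such a $u$ factors as $u=h\circ\bar\s_k$ with $h$ absolutely continuous; writing $\bar\s_k$ as the indefinite integral of $g\,d\lambda^\e_\ff$ one has $\bar\s_k=G\circ\ff$ for an absolutely continuous strictly increasing $G$ on $\ff(\bar{\mathtt{I}}_k)$, hence $u=(h\circ G)\circ\ff$, i.e.\ $u=\Phi\circ\ff$ with $\Phi:=h\circ G$ absolutely continuous and compactly supported; smoothing $\Phi$ to a $C_c^\infty$ function $\varphi$ and composing with $\ff$ gives the required approximants in $\CC_\ff$, and the energy norm is controlled because $\bar{\mathfrak{E}}$-energy of $\Phi\circ\ff$ equals the Dirichlet-integral energy of $\Phi$ against $g\,d\lambda^\e_\ff$ transported to the $\ff$-side. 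The main obstacle is the gluing across the inactive gaps inside each $\bar{\mathtt{I}}_k$: a generic $u\in\bar{\mathfrak{F}}$ need not be of the single-composition form $\Phi\circ\ff$ globally, only piecewise, so I must verify that the pieces can be matched into one continuous $\Phi$ on $\ff(\bar{\mathtt{I}}_k)$ — this is precisely where condition~(1) of Definition~\ref{DEF53} ($\lambda^\t_\ff$ vanishes on $[e_i,e_j]$, so $\ff$ is constant on each gap and the $\ff$-image of the gap is a single point) is used, and where the analogous argument of \cite[\S3]{LY17} for $\ff(x)=x$ is the model to follow. Once both inclusions and the energy identity are in place, Theorem~\ref{thm2.1} applied to $(\mathfrak{E},\mathfrak{F})$ and $(\bar{\mathfrak{E}},\bar{\mathfrak{F}})$ — both D-subspaces of $(\EE,\FF)$ with, as we have shown, the same scale measure $\bar\lambda$ and mutual containment — forces $(\mathfrak{E},\mathfrak{F})=(\bar{\mathfrak{E}},\bar{\mathfrak{F}})$, completing the proof.
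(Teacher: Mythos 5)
Your first step ($\CC_\ff\subset\bar{\mathfrak{F}}$ with $\EE=\bar{\mathfrak{E}}$ on $\CC_\ff$, hence $\mathfrak{F}\subset\bar{\mathfrak{F}}$ and $(\mathfrak{E},\mathfrak{F})$ is a D-subspace of $(\bar{\mathfrak{E}},\bar{\mathfrak{F}})$) is essentially the paper's first step, carried out by the same change-of-variables computation behind Lemma~\ref{LM51} applied to $\bar{\lambda}=g\cdot\lambda^\e_\ff$. The problem is your reverse inclusion. You reduce the density of $\CC_\ff$ in $\bar{\mathfrak{F}}$ to approximating, on each $\bar{\mathtt{I}}_k$ separately, functions ``compactly supported in the interior.'' That reduction fails: when $\bar{\mathtt{I}}_k$ has a closed (reflecting) endpoint, functions supported in $\mathring{\bar{\mathtt{I}}}_k$ span only the part space $\bar{\mathfrak{F}}_{\mathring{\bar{\mathtt{I}}}_k}$, which is a proper subspace of $\FF^{(\bar{\s}_k)}$ (compare $C_c^\infty((0,1))$ versus $H^1([0,1])$ for reflecting Brownian motion). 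To approximate a function that is nonzero at such an endpoint, the approximant $\varphi\circ\ff$ must be nonzero there and must decay to zero across the adjacent trap region and neighbouring effective intervals without picking up extra energy; this is exactly where one must use that distinct classes $\bar{\mathfrak{I}}_k$ are \emph{not} $\ff$-scale-connected --- either $\lambda^\t_\ff$ charges the gap, giving $\varphi$ room in $\ff(\mathbb{R})$ to interpolate at zero cost, or $\bar{\lambda}$ blows up and the endpoint is open. Your proposal never invokes the failure of either condition of Definition~\ref{DEF53} across distinct classes, so it cannot distinguish the $\ff$-interval-merge from a coarser merge, and the global density claim is unsupported. In addition, the ``smoothing $\Phi$ to $\varphi\in C_c^\infty$'' step is an approximation in a weighted Sobolev norm with weight $1/(g\circ\ff^{-1})$, which may be unbounded; this is not routine mollification but is precisely the content of \cite[Theorem~3.2]{LY17}, which you should cite rather than re-derive in one clause.

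The paper sidesteps both difficulties. It never proves global density of $\CC_\ff$ in $\bar{\mathfrak{F}}$. Instead, knowing $(\mathfrak{E},\mathfrak{F})$ is a D-subspace of $(\bar{\mathfrak{E}},\bar{\mathfrak{F}})$, it applies Theorem~\ref{thm2.1}: the effective intervals $\{\mathtt{I}_k\}$ of $(\mathfrak{E},\mathfrak{F})$ are coarser than $\{\bar{\mathtt{I}}_k\}$. It then compares the two \emph{part} Dirichlet forms on each open interior $J=\mathring{\bar{\mathtt{I}}}_k$, where both are irreducible, $C_c^\infty\circ\ff|_J$ is a core of the smaller one, and $d\bar{\s}_k\ll d\ff_J$ lets \cite[Theorem~3.2]{LY17} show it is also a core of the larger one; this identifies the scale measure $\lambda_{\mathfrak{s}}=\bar{\lambda}$ without touching the boundary points. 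Finally the interval structure is pinned down by contradiction: if some $\mathtt{I}_m$ contained two distinct $\bar{\mathtt{I}}_k$'s, then $\lambda_\ff\ll\bar{\lambda}$ on $\mathring{\mathtt{I}}_m$ (again because $C_c^\infty\circ\ff$ is a core of an irreducible part form there) would force $\lambda^\t_\ff=0$ between them, making the underlying $I_n$'s $\ff$-scale-connected across the two classes --- impossible. If you want to salvage your direct-density route, you must (i) treat the reflecting endpoints of each $\bar{\mathtt{I}}_k$, and (ii) make explicit use of the negation of Definition~\ref{DEF53} between distinct classes to localize $\varphi\circ\ff$; as written, the argument has a genuine gap at exactly the point the theorem is about.
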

\begin{proof}
We first prove $\CC\subset \bar{\mathfrak{F}}$ by applying Lemma~\ref{LM51}. Indeed, note that the scale measure associated to $(\bar{\mathfrak{E}},\bar{\mathfrak{F}})$ is $\bar{\lambda}=g\cdot \lambda^\e_\ff$, and $g>0$, $\lambda^\e_\ff$-a.e. by Lemma~\ref{LM52}. Recall that $H$ is the measurable subset of $\mathbb{R}$ in the proof of Lemma~\ref{LM52}. Then $\lambda^\e_\ff\ll \bar{\lambda}$ and
\[
\left(\frac{d\lambda^\e_\ff}{d\bar{\lambda}}\right)^2d\bar{\lambda}=\frac{1}{g^2}\cdot 1_Hd\lambda_\ss=\left(\frac{d\lambda^\e_\ff}{d\lambda_\ss}\bigg|_H \right)^2\cdot 1_Hd\lambda_\ss=\left(\frac{d\lambda^\e_\ff}{d\lambda_\ss} \right)^2d\lambda_\ss\bigg|_H,
\]
on account of $$\bar{\lambda}=1_H\cdot \lambda_\ss\ \text{and}\ g=\frac{d\bar{\lambda}}{d\lambda^\e_\ff}=1_H\cdot\frac{d\lambda_\ss}{d\lambda^\e_\ff}.$$ It follows from \eqref{EQ5LEF} that $d\lambda^\e_\ff/d\bar{\lambda}\in L^2_\mathrm{loc}(\mathbb{R}, \bar{\lambda})$. Hence by Lemma~\ref{LM51}, it follows that $\CC_\ff\subset \bar{\mathfrak{F}}$. In addition, for any $u\in \CC_\ff\subset \FF\cap \bar{\mathfrak{F}}$, we have $u\ll \lambda_\ss, u\ll \bar{\lambda}$ and
\[
\EE(u,u)=\frac{1}{2}\int_\mathbb{R} \left(\frac{du}{d\lambda_\ss}\right)^2d\lambda_\ss=\frac{1}{2}\int_\mathbb{R}\left(\frac{du}{d\bar{\lambda}}\cdot \frac{d\bar{\lambda}}{d\lambda_\ss}\right)^2d\lambda_\ss.
\]
Since $$\left(\frac{d\bar{\lambda}}{d\lambda_\ss}\right)^2d\lambda_\ss=\left(1_H\right)^2d\lambda_\ss=d\bar{\lambda},$$ it hold that
\begin{equation}\label{EQ5EUU}
	\EE(u,u)=\frac{1}{2}\int_\mathbb{R}\left( \frac{du}{d\bar{\lambda}}\right)^2d\bar{\lambda}=\bar{\mathfrak{E}}(u,u).
\end{equation}
Therefore, we conclude that $\mathfrak{F}\subset \bar{\mathfrak{F}}$ and $\bar{\mathfrak{E}}|_{\mathfrak{F}\times \mathfrak{F}}=\mathfrak{E}$.

Next, we prove \eqref{EQ5LSL}.   Since $(\mathfrak{E},\mathfrak{F})$ is a D-subspace of $(\bar{\mathfrak{E}},\bar{\mathfrak{F}})$, Theorem~\ref{thm2.1} tells that $\{(\mathtt{I}_k,\mathfrak{s}_k)\}$ is coarser than $\{(\bar{\mathtt{I}}_k,\bar{\mathfrak{s}}_k)\}$. Consider the part Dirichlet forms $(\mathfrak{E}_J,\mathfrak{F}_J)$ and $(\bar{\mathfrak{E}}_J, \bar{\mathfrak{F}}_J)$ of $(\mathfrak{E},\mathfrak{F})$ and $(\bar{\mathfrak{E}},\bar{\mathfrak{F}})$ respectively on $J:=\mathring{\bar{\mathtt{I}}}_k$, i.e. the interior of $\bar{\mathtt{I}}_k$. They are two irreducible Dirichlet forms and  $(\mathfrak{E}_J,\mathfrak{F}_J)$ is a D-subspace of $(\bar{\mathfrak{E}}_J, \bar{\mathfrak{F}}_J)$. Write $\ff_J:=\ff|_J$. Clearly, $C_c^\infty\circ \ff_J$ is a special standard core of $(\mathfrak{E}_J,\mathfrak{F}_J)$. Since
\[
	d\bar{\s}_k=g\cdot \lambda^\e_\ff|_J=g\cdot d\ff_J\ll d\ff_J,
\]
it follows from \cite[Theorem~3.2]{LY17} that  $C_c^\infty\circ \ff_J$ is also dense in $\bar{\mathfrak{F}}_J$. Consequently, $(\mathfrak{E}_J,\mathfrak{F}_J)$ and $(\bar{\mathfrak{E}}_J, \bar{\mathfrak{F}}_J)$ are identical and particularly, $\lambda_\s|_J=\bar{\lambda}|_J$. Note that
$$\lambda_\s\left(\bigcup_{k\geq 1}\mathtt{I}_k\setminus \bigcup_{k\geq 1}\bar{\mathtt{I}}_k\right)=0$$ by Remark~\ref{RM32}. Consequently, $\lambda_\s=\bar{\lambda}$.

Finally, we derive \eqref{EQ5IKS}. By the fact that $(\mathfrak{E},\mathfrak{F})$ is a D-subspace of $(\bar{\mathfrak{E}},\bar{\mathfrak{F}})$ with the same scale measure, it suffices to verify that any $\mathtt{I}_m$ contains only one $\bar{\mathtt{I}}_k$. We shall prove it by contradiction.  
Suppose that another $\bar{\mathtt{I}}_j\subset \mathtt{I}_m$. We assert that $I_p\in \bar{\mathfrak{I}}_k$ is $\ff$-scale-connected to $I_q\in \bar{\mathfrak{I}}_j$, which contradicts the definition of equivalence classes. In fact, $\bar{\lambda}=\lambda_\s$ is a Radon measure on $\mathtt{I}_m$, and hence $\bar{\lambda}([e_p, e_q])<\infty$. On the other hand, $C_c^\infty\circ \ff|_{J}$ is a core of part Dirichlet form $(\mathfrak{E}_J, \mathfrak{F}_J)$ of $(\mathfrak{E},\mathfrak{F})$ on $J:=\mathring{\mathtt{I}}_m$, and $(\mathfrak{E}_J, \mathfrak{F}_J)$ is an irreducible Dirichlet form with the scale function induced by $\bar{\lambda}|_J$. This implies that
\[
	\lambda_\ff|_J\ll \bar{\lambda}|_J. 
\]
Combining $\bar{\lambda}|_J\ll \lambda_\ss|_J$, we have $\lambda_\ff|_J\ll \lambda_\ss|_J$. In particular, $\lambda^\t_\ff|_J=0$ and then
$\lambda^\t_\ff([e_p,e_q])=0$. As a consequence, $I_p$ is $\ff$-scale-connected to $I_q$.

We have reached eventually the conclusion
\[
	\{(\mathtt{I}_k, \mathfrak{s}_k): k\geq 1\}=\{(\bar{\mathtt{I}}_k, \bar{\s}_k): k\geq 1\}.
\]
That completes the proof.
\end{proof}

A useful corollary of this theorem is as follows.

\begin{corollary}\label{COR57}
$(\mathfrak{E},\mathfrak{F})$ (or $(\bar{\mathfrak{E}},\bar{\mathfrak{F}})$) has the same scale measure as $(\EE,\FF)$, if and only if
\begin{equation}\label{EQ5LSLE}
	 \lambda_\ss\ll \lambda^\e_\ff.
\end{equation}
Furthermore, $\CC_\ff$ is a special standard core of $(\EE,\FF)$ if and only if \eqref{EQ5LSLE} and any one of the following assertions hold:
\begin{itemize}
\item[(1)] Each $I_i$ is $\ff$-scale-isolated, in other words, it is not $\ff$-scale-connected to any other interval.
\item[(2)] \eqref{EQ5LTF} holds, i.e. if $\lambda_\ss([e_i,e_j])<\infty$ for some $i\neq j$, then $\lambda^\t_\ff\left([e_i,e_j]\right)=0$ implies $[e_i,e_j]\setminus \left(\bigcup_{n\geq 1}I_n\right)$ is of at most countable points.
\end{itemize}
\end{corollary}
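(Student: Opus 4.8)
The plan is to harvest everything from Theorem~\ref{THM56} together with the Lebesgue--Radon--Nikodym splitting \eqref{EQ5LSR}. By Theorem~\ref{THM56} we have $(\mathfrak{E},\mathfrak{F})=(\bar{\mathfrak{E}},\bar{\mathfrak{F}})$, so the scale measure of $(\mathfrak{E},\mathfrak{F})$ is $\lambda_\s=\bar{\lambda}=g\cdot\lambda^\e_\ff$ by \eqref{EQ5LSL}. For the first assertion I would argue as follows: from \eqref{EQ5LSR}, $\lambda_\ss=\bar{\lambda}+\kappa$ with $\kappa$ singular to $\lambda^\e_\ff$, and on any $\lambda^\e_\ff$-null set $\lambda_\ss$ and $\kappa$ coincide, so $\lambda_\ss\ll\lambda^\e_\ff$ is equivalent to $\kappa\ll\lambda^\e_\ff$, which combined with $\kappa\perp\lambda^\e_\ff$ forces $\kappa=0$, i.e.\ $\lambda_\s=\bar{\lambda}=\lambda_\ss$; conversely $\lambda_\s=\lambda_\ss$ gives $\lambda_\ss=g\cdot\lambda^\e_\ff\ll\lambda^\e_\ff$. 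Hence $(\mathfrak{E},\mathfrak{F})$ has the same scale measure as $(\EE,\FF)$ if and only if \eqref{EQ5LSLE} holds.

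For the second assertion I would first observe that, since $\mathfrak{F}$ is by construction the $\EE_1$-closure of $\CC_\ff$ and $\CC_\ff$ is a point-separating subalgebra of $\FF\cap C_c(\mathbb{R})$ that is uniformly dense in $C_c(\mathbb{R})$ and stable under smooth truncations, the statement that $\CC_\ff$ is a special standard core of $(\EE,\FF)$ is equivalent to $\mathfrak{F}=\FF$, i.e.\ $(\mathfrak{E},\mathfrak{F})=(\EE,\FF)$. By Remark~\ref{RM32}(3) this holds if and only if the two forms have the same effective intervals and the same scale measure; the scale-measure part is exactly \eqref{EQ5LSLE} by the first assertion. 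So, assuming \eqref{EQ5LSLE} (equivalently $\bar{\lambda}=\lambda_\ss$), it remains to characterise when $\{\bar{\mathtt{I}}_k:k\ge1\}=\{I_n:n\ge1\}$, where $\{(\bar{\mathtt{I}}_k,\bar{\s}_k):k\ge1\}$ is the family of Lemma~\ref{LM55}.

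Under \eqref{EQ5LSLE} the family $\{\bar{\mathtt{I}}_k\}$ arises from $\{I_n\}$ by the $\ff$-interval-merge relative to $\lambda_\ss$, i.e.\ by merging the equivalence classes of $\ff$-scale-connection. If every $I_i$ is $\ff$-scale-isolated, each class is a singleton $\{I_i\}$ and, because $\bs_i$ is adapted to $I_i$, its merging relative to $\lambda_\ss$ recovers exactly $(I_i,\bs_i)$; thus the family is unchanged. Conversely, a class containing two distinct (hence disjoint) intervals $I_i,I_j$ merges to an interval strictly containing both, which cannot lie in the mutually disjoint family $\{I_n\}$; hence $\{\bar{\mathtt{I}}_k\}=\{I_n\}$ forces condition~(1). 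So, given \eqref{EQ5LSLE}, $\CC_\ff$ is a special standard core if and only if (1) holds. It then remains to see that (1) and (2) are equivalent under \eqref{EQ5LSLE}: for $i\ne j$, the hypothesis of \eqref{EQ5LTF} ($\lambda_\ss([e_i,e_j])<\infty$ and $\lambda^\t_\ff([e_i,e_j])=0$) is precisely that $I_i$ is $\ff$-scale-connected to $I_j$ (Definition~\ref{DEF53}), while its conclusion together with $\lambda_\ss([e_i,e_j])<\infty$ says precisely that $I_i$ and $I_j$ are \emph{tightly} scale-connected with respect to $\lambda_\ss$ (Definition~\ref{DEF43}). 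Since by Remark~\ref{RM44}(2) two distinct effective intervals are never tightly scale-connected with respect to their own scale measure, \eqref{EQ5LTF} holds exactly when no pair $i\ne j$ is $\ff$-scale-connected, i.e.\ exactly when (1) holds.

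The step I expect to be the main obstacle is the middle one: turning the claim that $\{\bar{\mathtt{I}}_k\}=\{I_n\}$ holds precisely when the $\ff$-interval-merge does nothing, precisely when (1) holds, into an airtight argument, in particular controlling the endpoint/adaptedness behaviour so that a singleton $\ff$-scale-connection class really merges back to $(I_i,\bs_i)$ and a nontrivial class genuinely enlarges an interval. Once that is in place, the first assertion is just uniqueness of the Lebesgue decomposition, and the equivalence of (1) and (2) is a direct comparison of Definitions~\ref{DEF43} and \ref{DEF53} through Remark~\ref{RM44}(2).
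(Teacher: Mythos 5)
Your proposal is correct and follows essentially the same route as the paper: the first assertion is read off from the uniqueness of the Lebesgue decomposition \eqref{EQ5LSR} (so \eqref{EQ5LSLE} is exactly $\kappa=0$, i.e. $\bar{\lambda}=\lambda_\ss$), and the second reduces, via Theorem~\ref{THM56} and Remark~\ref{RM32}(3), to the $\ff$-interval-merge being trivial, with (1) and (2) identified through Remark~\ref{RM44}(2) (the paper phrases this as the $\ff$-interval-merge coinciding with the minimal interval-merge and invokes Remark~\ref{RM45}, which rests on the same fact). Your write-up merely fills in details the paper leaves implicit, including the endpoint/adaptedness check that a singleton class merges back to $(I_i,\bs_i)$.
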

\begin{proof}
Clearly, \eqref{EQ5LSLE} amounts to $\bar{\lambda}=\lambda_\ss$ by \eqref{EQ5LSR}. The first assertion implies the equivalence relation induced by $\ff$-scale-connection is trivial and no interval-merge needs to do. The second assertion means $\ff$-interval-merge actually coincides with the minimal interval-merge. Then the conclusion follows from Remark~\ref{RM45}.
\end{proof}

This corollary provides a simple way to find a `nice' special standard core like $\CC_\ff$ of $(\EE,\FF)$.  In practice, we only need to find $\ff\in \mathbf{S}(\mathbb{R})$ satisfying \eqref{EQ5LEF}, \eqref{EQ5LTF} and \eqref{EQ5LSLE}, and then $\CC_\ff=C_c^\infty\circ \ff$ is an expected special standard core.
Many examples where $\ff$ is the natural scale function can be found in \cite{LY17}. We give more examples below, which tell us that $\lambda^\t_\ff$ is very flexible to obtain these cores. This is the reason we use the superscript `$\t$' in $\lambda^\t_\ff$ to stand for `trivial part'. We also highlight that $\ff$ is only a medium to induce this core (or produce a D-subspace), and the measure $\lambda^\e_\ff$ is not necessarily equal to the scale measure of $(\EE,\FF)$ (or $(\mathfrak{E},\mathfrak{F})$).

\begin{example}\label{EXA58}
Let us consider the Dirichlet form $(\EE,\FF)$ in Example~\ref{EXA33}. Further we impose \eqref{EQ5LEF} and \eqref{EQ5LSLE}, for instance, $\lambda^\e_\ff$ is taken to be the Lebesgue measure on $\mathbb{R}$. As a consequence, $\bar{\lambda}=\lambda_\ss$, and
\[
	\bar{\lambda}([e_i,e_j])<\infty,\quad \forall i\neq j.
\]
This indicates $C_c^\infty\circ \ff$ is a core of $(\EE,\FF)$, if and only if
\begin{equation}\label{EQ5LTFEI}
	\lambda^\t_\ff([e_i,e_j])>0,\quad \forall i\neq j.
\end{equation}

 In \cite{LY17}, the authors have shown $C_c^\infty(\mathbb{R})$ is not $\EE_1$-dense in $\FF$. In fact, in this case $\ff(x)=x$. Then $\bar{\lambda}=\lambda_\ss$ is the Lebesgue measure and all the intervals are mutually $\ff$-scale-connected. Accordingly, the closure of $C_c^\infty(\mathbb{R})$ is the Dirichlet form of 1-dim Brownian motion.

Let $\mathfrak{c}$ be the standard Cantor function with $\mathfrak{c}(x)=0$ for any $x\leq 0$, $\mathfrak{c}(x)=1$ for any $x\geq 1$. Set
\begin{equation}\label{EQ5FXX}
\ff(x)=x+\mathfrak{c}(x),\quad x\in \mathbb{R}. 	
\end{equation}
Clearly, $\ff\in \mathbf{S}(\mathbb{R})$. We assert \eqref{EQ5LEF}, \eqref{EQ5LSLE} and \eqref{EQ5LTFEI} hold, and hence $C_c^\infty\circ \ff$ is a core of $(\EE,\FF)$ (Note incidentally $\ff_l(x):=x+l\cdot\mathfrak{c}(x)$ for any $l>0$ also satisfies these conditions).
Indeed, $\lambda^\e_\ff=dx$ and $\lambda^\t_\ff=d\mathfrak{c}$. For any $i\neq j$, clearly $\lambda^\t_\ff([e_i,e_j])=d\mathfrak{c}([e_i,e_j])>0$. This is nothing but \eqref{EQ5LTFEI}.
\end{example}

 \begin{example}\label{EXA59}
 We still consider the intervals in Example~\ref{EXA33}, but replace the scale function on $I_n$ for $n\geq 3$ by
 \[
 		\ss_n(x):=\frac{x-e_n}{|a_n-b_n|}.
 \]
If $\ff$ is taken to satisfy \eqref{EQ5LEF} and \eqref{EQ5LSLE} (such as $\lambda^\e_\ff:=dx|_{\cup_{n\geq 1}I_n}$), then $\bar{\lambda}=\lambda_\ss$ and
\begin{equation}\label{EQ5LEI}
	\bar{\lambda}([e_i,e_j])=\infty,\quad \forall i\neq j.
\end{equation}
This implies the minimal interval-merge, $\ff$-interval-merge and maximal interval-merge are identified under the scale measure $\bar{\lambda}$. Particularly, $C_c^\infty\circ \ff$ is a special standard core of the Dirichlet form produced by them.

In practice, it has been shown in \cite[Example~3.8~(3)]{LY17} that $C_c^\infty(\mathbb{R})$ is a core of this Dirichlet form, in which $\ff$ is the natural scale function. Certainly, any $\ff\in \mathbf{S}(\mathbb{R})$ with $\lambda^\e_\ff=c\cdot dx$ for some constant $c>0$ (such as \eqref{EQ5FXX}, as well as $\ff_l$) also induces a special standard core.
\end{example}

\begin{example}\label{EXA510}
Another similar example of Dirichlet form is presented in \cite[Example~3.8~(4)]{LY17}, in which the standard Cantor set $K$ is replaced by a generalized Cantor set, and $\ss_n$ is still taken to be the natural scale function on each interval. If \eqref{EQ5LEF} and \eqref{EQ5LSLE} hold (such as $\lambda^\e_\ff:=dx|_{\cup_{n\geq 1}I_n}$), then $\bar{\lambda}=\lambda_\ss$ is the Lebesgue measure on $\cup_{n\geq 1}I_n$. Thus
\[
	\bar{\lambda}([e_i,e_j])<\infty,\quad \forall i\neq j.
\]
In this case, $C_c^\infty\circ \ff$ is a core of $(\EE,\FF)$, if and only if \eqref{EQ5LTFEI} holds.

For example, $\lambda^\t_\ff=dx|_{\mathbb{R}\setminus \cup_{n\geq 1}I_n}$ satisfies \eqref{EQ5LTFEI}, and particularly, $C_c^\infty(\mathbb{R})$ is a core of $(\EE,\FF)$. Another example of $\lambda^\t_\ff$ to satisfy \eqref{EQ5LTFEI} is the measure induced by the generalized Cantor function related to $K$. Furthermore, we can also conclude that \eqref{EQ5FXX} with this generalized Cantor function in place of $\mathfrak{c}$ produces another special standard core.
\end{example}

We give a remark about the maximal interval-merge under the scale measure $\bar{\lambda}$. Apparently, this maximal interval-merge produces a Dirichlet form with a special standard core $C_c^\infty\circ \ff$, if and only if it has no difference with the optional interval-merge stated in \S\ref{SEC52}. By the remark after Lemma~\ref{LM55}, this amounts to \eqref{EQ5EIE}.
In Example~\ref{EXA59}, \eqref{EQ5EIE} is always valid since \eqref{EQ5LEI} holds. However in Example~\ref{EXA58} and \ref{EXA510},
\[
	\bar{\lambda}([e_i,e_j])<\infty,\quad \forall i\neq j,
\]
and the intervals have a Cantor-type stucture. Then \eqref{EQ5EIE} means $\lambda^\t_\ff\equiv 0$. For example, set
\[
	\ff(x):=\int_0^x 1_{K^c}(y)dy,\quad x\in \mathbb{R},
\]
where $K$ is the standard Cantor set or a generalized Cantor set. Clearly, \eqref{EQ5EIE} holds for this $\ff$.
As a result, $(\mathfrak{E},\mathfrak{F})$ coincides with the Dirichlet form produced by the maximal interval-merge. In the case of standard Cantor set, $(\mathfrak{E},\mathfrak{F})$ is nothing but 1-dim Brownian motion. Nevertheless, in the case of generalized Cantor set, $(\mathfrak{E},\mathfrak{F})$, associated with an irreducible diffusion on $\mathbb{R}$ which is deeply described in \cite{LY14}, is a proper D-subspace of 1-dim Brownian motion.

\subsection{Existence of special standard core}

We have seen that the closure of $\CC_\ff$ with $\ff\in\mathbf{S}(\mathbb{R})$ satisfying \eqref{EQ5LEF}
is a D-subspace of $(\EE,\FF)$ and how to reach it through a scale-shrink operation and an optional interval-merge operation.
We have also seen that a special standard core of the form $\CC_\ff$ with $\ff\in \mathbf{S}(\mathbb{R})$ exists for the Dirichlet forms in Examples~\ref{EXA58}, \ref{EXA59} and \ref{EXA510}.  
It is then natural to ask if any D-subspace is generated by such an $\ff$. We shall answer this question by a slightly more general result.

\begin{theorem}\label{T58SSC}
Let $(\EE,\FF)$ be the Dirichlet form on $L^2(\mathbb{R},m)$ with the effective intervals $\{(I_n,\ss_n):n\geq 1\}$ as before. Then there exists a function $\ff\in \mathbf{S}(\mathbb{R})$ such that $\CC_\ff=C_c^\infty\circ \ff$ is a special standard core of $(\EE,\FF)$.
\end{theorem}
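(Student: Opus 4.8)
The plan is to construct $\ff\in\mathbf S(\mathbb R)$ explicitly from the effective intervals $\{(I_n,\ss_n):n\geq 1\}$ and then invoke Corollary~\ref{COR57}. According to that corollary, it suffices to exhibit $\ff$ satisfying three conditions: \eqref{EQ5LEF} (so that $\CC_\ff\subset\FF$), \eqref{EQ5LSLE} (so that $\lambda_\ss\ll\lambda_\ff^\e$, hence $\CC_\ff$ and $(\EE,\FF)$ share the scale measure), and \eqref{EQ5LTF} (so that the $\ff$-interval-merge is the minimal interval-merge and therefore, by Remark~\ref{RM45}, no actual merging occurs). So the target is an $\ff$ whose effective part $\lambda_\ff^\e$ is equivalent to $\lambda_\ss$ on each $I_n$ with a locally bounded density, and whose trivial part $\lambda_\ff^\t$ separates intervals exactly enough to block all tight scale-connections.

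First I would define the effective part. On each $I_n$, set $d\ff$ proportional to $d\ss_n$; more precisely, choose positive constants $c_n$ (summable appropriately, e.g.\ $c_n=2^{-n}(1+\ss_n(I_n))^{-1}$ when $I_n$ is bounded with finite total scale length, and $c_n=2^{-n}$ otherwise with a truncation) so that $\sum_n c_n\,\ss_n(I_n\cap[-R,R])<\infty$ for every $R$; this ensures $\lambda_\ff^\e\ll\lambda_\ss$ with density $c_n$ on $I_n$, bounded on compacts, giving \eqref{EQ5LEF}, and simultaneously $\lambda_\ss\ll\lambda_\ff^\e$ (density $c_n^{-1}$ on $I_n$), giving \eqref{EQ5LSLE}. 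Next I would define the trivial part $\lambda_\ff^\t$ on $\big(\bigcup_n I_n\big)^c$. The point is to put positive $\lambda_\ff^\t$-mass into any interval $[e_i,e_j]$ for which $[e_i,e_j]\setminus\bigcup_n I_n$ is uncountable — equivalently, into any gap-set carrying a Cantor-type (nowhere dense but uncountable) structure — while keeping $\ff$ continuous, strictly increasing, finite, with $\ff(0)=0$. A clean way: let $C:=\overline{\big(\bigcup_n I_n\big)^c\setminus\{\text{isolated points}\}}$, or more carefully the ``perfect kernel'' of the complement restricted to each maximal gap cluster, and let $\lambda_\ff^\t$ be a finite measure of full support on that perfect set together with enough point masses' absolutely continuous replacement (one must use a continuous increasing function, so instead of atoms use a tiny absolutely continuous spread) — but since $\ff$ must be continuous, one assigns to each such Cantor-type subset a Cantor-type function contribution of small total variation $2^{-k}$, indexed over a countable basis. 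Then $\lambda_\ff^\t([e_i,e_j])>0$ precisely when $[e_i,e_j]\setminus\bigcup_n I_n$ contains such a perfect set, i.e.\ precisely when it is uncountable; contrapositively, if $\lambda_\ff^\t([e_i,e_j])=0$ then $[e_i,e_j]\setminus\bigcup_n I_n$ is at most countable, which is exactly \eqref{EQ5LTF}.

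With $\ff$ so defined I would verify it lies in $\mathbf S(\mathbb R)$: strict monotonicity is clear since $\lambda_\ff^\e$ is fully supported on $\bigcup_n I_n$ and the gaps get positive $\lambda_\ff^\t$-mass wherever they are non-degenerate Cantor sets while degenerate (countable) gaps contribute nothing but also do not obstruct strict increase because their endpoints are approached through intervals of positive $\lambda_\ff^\e$-mass; continuity holds because $\lambda_\ff$ has no atoms (the effective part is nonatomic, the Cantor contributions are continuous); finiteness on compacts from the summability of $c_n$ and the $2^{-k}$ bookkeeping. Then \eqref{EQ5LEF}, \eqref{EQ5LSLE}, \eqref{EQ5LTF} all hold by construction, and Corollary~\ref{COR57} (using assertion (2) there) yields that $\CC_\ff$ is a special standard core of $(\EE,\FF)$.

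The main obstacle I anticipate is the careful construction and full-support/nonatomicity bookkeeping of $\lambda_\ff^\t$: one must put \emph{positive} mass on every uncountable piece of $\big(\bigcup_n I_n\big)^c$ — and there can be countably many ``generations'' of nested Cantor-type clusters, as the second-type and higher-type examples in Example~\ref{EXA49} show — yet keep the total variation finite on compacts and the resulting $\ff$ continuous and strictly increasing. The right device is to enumerate a countable family of closed sets generating the Borel structure of the complement's perfect kernel (e.g.\ intersections with rational intervals), and for the $k$-th one assign an independent continuous increasing singular contribution of total variation $\leq 2^{-k}$ supported exactly on it; summing gives the desired $\lambda_\ff^\t$. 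Checking that this sum charges $[e_i,e_j]$ iff that set has uncountably many complement-points — i.e.\ that the ``perfect kernel'' detection is faithful — is the delicate measure-theoretic point, but it is a standard Cantor--Bendixson argument and should go through without genuine difficulty.
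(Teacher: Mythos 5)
Your overall strategy coincides with the paper's: reduce to Corollary~\ref{COR57} and build $d\ff=\lambda_\ff^\e+\lambda_\ff^\t$ with $\lambda_\ff^\e\simeq\lambda_\ss$ (with locally $L^2$ density) on $\bigcup_n I_n$, and $\lambda_\ff^\t$ a continuous measure on the complement that charges an interval $[e_i,e_j]$ exactly when $[e_i,e_j]\setminus\bigcup_n I_n$ is uncountable. Your trivial part is essentially the paper's: the paper extracts the perfect kernel $K$ of $\partial F$ by an explicit equivalence-class merge rather than by quoting Cantor--Bendixson, then takes $1_{\mathring F}\cdot dx+dc_K$ for a Cantor function $c_K$ supported on $K$; your ``countable basis of $2^{-k}$ Cantor contributions'' accomplishes the same detection of uncountable gap clusters.

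There is, however, a genuine gap in your effective part. Taking $d\ff=c_n\,d\ss_n$ on $I_n$ with a \emph{constant} $c_n$ fails whenever $I_n$ is a bounded interval with an open finite endpoint: the adaptedness conditions $(\mathbf{A}_R)$, $(\mathbf{B}_R)$ force $\ss_n$ to be unbounded at such an endpoint, so $d\ss_n$ has infinite mass on any neighbourhood of it inside $I_n$, and $c_n\,d\ss_n$ is then not Radon near that (finite) point; no constant rescaling can cure this, and a ``truncation'' would destroy the required equivalence $\lambda_\ss\ll\lambda_\ff^\e$ on the truncated piece. Such intervals genuinely occur (e.g.\ $I_1=(0,1]$ discussed at the end of \S\ref{SEC3}). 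The paper's construction \eqref{EQ5LNG} uses a necessarily non-constant density: on $J_n=\ss_n(\mathring I_n)$ choose a strictly positive continuous $h_n$ with $\int_{J_n}h_n\,dx\le n^{-2}$ and $\int_{J_n}h_n^2\,dx\le n^{-2}$, and set $g_n=h_n\circ\ss_n^{-1}$ and $\lambda_1=\sum_n g_n\,d\ss_n$; strict positivity gives $\lambda_1\simeq\lambda_\ss$, while the integral bounds give both local finiteness and the $L^2$ condition in \eqref{EQ5LEF}. With that replacement for your $c_n$'s, the rest of your argument goes through and matches the paper's proof.
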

\begin{proof}
Briefly, by Corollary ~\ref{COR57}, we need to construct $\ff\in \mathbf{S}(\mathbb{R})$ such that $\lambda_\ff^{\e}\simeq\lambda_\ss$ (mutually absolutely continuous or equivalent), and for $i\not=j$,
$I_i$ and $I_j$ are not $\ff$-scale-connected.
For obtaining such an $\ff$, it suffices to construct a fully supported Radon measure $\lambda$ on $\mathbb{R}$, which charges no set of single point and satisfies  \eqref{EQ5LEF}, \eqref{EQ5LTF} and \eqref{EQ5LSLE}, so that $d\ff=\lambda$.

Write $G:=\bigcup_{n\geq 1}\mathring{I}_n$, where $\mathring{I}_n$ is the interior of $I_n$, and $F:=G^c$. We shall construct $\lambda_1=\lambda|_G$ and $\lambda_2=\lambda|_F$ respectively.

The construction of $\lambda_1$ amounts to finding a Radon measure which is equivalent to $\lambda_\ss$. Actually we may find a finite measure $\lambda_1$ fully supported on $G$ such that
\begin{equation}\label{EQ5LLS}
\lambda_1\simeq \lambda_\ss,\quad \frac{d\lambda_1}{d\lambda_\ss}\in L^2_\mathrm{loc}(\mathbb{R},\lambda_\ss),
\end{equation}
where $\simeq$ means that two measures are mutually absolutely continuous.
In fact, for each $n\geq 1$, write $J_n:=\ss_n(\mathring{I}_n)$ and take a strictly positive and continuous function $h_n$ on $J_n$ such that $\int_{J_n}h_n(x)dx\leq 1/n^2$ and $\int_{J_n}h_n(x)^2dx\leq 1/n^2$. Let $\mathtt{t}_n:=\mathtt{s}^{-1}_n$ be the inverse function of $\ss_n$ and set $g_n:=h_n\circ \mathtt{t}_n$ and
\begin{equation}\label{EQ5LNG}
\lambda_1:=\sum_{n\geq 1}g_n\cdot d\ss_n.
\end{equation}
We verify that $\lambda_1$ is such a measure. It is finite since
\[
\lambda_1(\mathbb{R})=\sum_{n\geq 1}\int_{J_n}h_n(x)dx\leq \sum_{n\geq 1}\frac{1}{n^2}<\infty.
\]
It is clear that $\lambda_1\simeq \lambda_\ss$ by the definition and the fact that $g_n$ is strictly positive.
Since
\[
\int_\mathbb{R} \left(\frac{d\lambda_1}{d\lambda_\ss} \right)^2d\lambda_\ss=\sum_{n\geq 1} \int_{\mathring{I}_n}g_n^2d\ss_n=\sum_{n\geq 1}\int_{J_n}h^2_n(x)dx\leq \sum_{n\geq 1}\frac{1}{n^2}<\infty,
\]
we have ${d\lambda_1/d\lambda_\ss}\in L^2(\mathbb{R},\lambda_\ss)$.

The role of $\lambda_2$ is to separate $I_i$ and $I_j$ when $\lambda_\ss([e_i,e_j])<\infty$, i.e.,
$\lambda_2([e_i,e_j])=0$ if and only if $[e_i,e_j]\setminus (\bigcup_n I_n)$ is countable. We construct the measure $\lambda_2$ supported on $F$ in the following manner. Write $F=\mathring{F}\cup \partial F$, where $\mathring{F}$ is the interior of $F$ and $\partial F=F\setminus \mathring{F}$, which is a nowhere dense closed set.

Since $\mathring{F}$ is open, it consists of at most countable disjoint open intervals. Denote these intervals by $\{I^{\mathring{F}}_k: k\geq 1\}$ and set
\[
\mathscr{I}:=\{\mathring{I}_n: n\geq 1\}\cup \{I^{\mathring{F}}_k: k\geq 1\}.
\]
A relation `$\sim$' on the intervals in $\mathscr{I}$ is defined as follows: for $J_1,J_2\in \mathscr{I}$, $J_1\sim J_2$ if and only if $[e^1,e^2]\cap \partial F$ is of at most countable points, where $e^1, e^2$ are two (arbitrary) points in $J_1$ and $J_2$ respectively. Clearly, `$\sim$' is an equivalence relation on $\mathscr{I}$, and we denote all the equivalence classes by
\[
	\bigcup_{m\geq 1} \mathcal{I}_m,
\]
where $\{\mathcal{I}_m: m\geq 1\}$ are disjoint subsets of $\mathscr{I}$. For each $m\geq 1$, we merge the intervals in $\mathcal{I}_m$ into a new open interval. More precisely, set $\alpha_m:=\inf\{x\in J: J\in \mathcal{I}_m\}$ and $\beta_m:=\sup\{x\in J: J\in \mathcal{I}_m\}$. Then $(\alpha_m, \beta_m)$ is the interval we obtained. By this operation, we attain a family of at most countable intervals
\[
\mathscr{I}^\sim=\{(\alpha_m,\beta_m): m\geq 1\}.
\]
Let $$K:=\left(\bigcup_{m\ge 1} (\alpha_m,\beta_m)\right)^c.$$ Then $K$ is a closed subset of $\partial F$ and $\partial F\setminus K$ is at most countable. Furthermore $K$ has no isolated points because the number of isolated points is at most countable. Hence if $K$ is non-empty, it is a set of Cantor-type, i.e., a nowhere dense perfect set.
It is well-known that there exists a Cantor function $c_K$ on $\mathbb{R}$, continuous and increasing, so that
the induced measure $dc_K$ is fully supported on $K$, i.e.,
$$\text{supp}(dc_K)=K.$$


Now $F=\mathring{F}\cup K\cup(\partial F\setminus K)$, and we define
\begin{equation}\label{EQ5LPF}
	\lambda_2:=1_{\mathring{F}}\cdot dx+dc_K,
\end{equation}
which assigns the Lebesgue measure on $\mathring{F}$, the measure induced by Cantor function $c_K$ on $K$ and zero on $\partial F\setminus K$, an at most countable set.

Finally, we check the measure $$\lambda:=\lambda_1+\lambda_2,$$  
satisfies \eqref{EQ5LEF}, \eqref{EQ5LTF} and \eqref{EQ5LSLE}. Clearly, $\lambda$ charges no singleton, is fully supported and Radon on $\mathbb{R}$. Let $$\ff(x):=\int_0^x\lambda(dy), \ x\in\mathbb{R}.$$ Then $\ff\in\mathbf{S}(\mathbb{R})$ and $\lambda_\ff=\lambda$. Note that $\lambda_\ff^\e=\lambda_1$ and $\lambda_\ff^\t=\lambda_2$. Then \eqref{EQ5LEF} and \eqref{EQ5LSLE} follow from \eqref{EQ5LLS}. To show \eqref{EQ5LTF}, we take $i\not=j$ with $\lambda_2([e_i,e_j])=0$ where $e_i\in I_i, e_j\in I_j$. Then $$[e_i,e_j]\cap (\mathring{F}\cup K)=\emptyset$$ because of \eqref{EQ5LPF}. Hence
$[e_i,e_j]\setminus \left(\bigcup_n I_n\right)\subset [e_i,e_j]\cap F$ is at most countable and this proves \eqref{EQ5LTF}.
  That completes the proof.
\end{proof}

Write $\mathbf{S}^\e(\mathbb{R}):=\{\ff\in \mathbf{S}(\mathbb{R}): \ff \text{ satisfies \eqref{EQ5LEF}}\}$.
By Lemma~\ref{LM51} and the above theorem, we can conclude the following corollary.

\begin{corollary}
$(\EE',\FF')$ is a D-subspace of $(\EE,\FF)$ if and only if there exists a function $\ff\in \mathbf{S}^\e(\mathbb{R})$
such that $\CC_\ff$ is a special standard core of $(\EE',\FF')$. Furthermore, for $\ff_1,\ff_2\in \mathbf{S}^\e(\mathbb{R})$, if $d\ff_1\simeq d\ff_2$, then
$\CC_{\ff_1}$ and $\CC_{\ff_2}$ generate the same D-subspace.
\end{corollary}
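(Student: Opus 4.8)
The plan is to read off both implications from Lemma~\ref{LM51}, Theorem~\ref{thm2.1}, Theorem~\ref{THM56} and Theorem~\ref{T58SSC}. First I would treat the necessity. Suppose $(\EE',\FF')$ is a D-subspace of $(\EE,\FF)$. Since $(\EE',\FF')$ is itself a regular and strongly local Dirichlet form on $L^2(\mathbb{R},m)$, Theorem~\ref{T58SSC} supplies some $\ff\in\mathbf{S}(\mathbb{R})$ for which $\CC_\ff$ is a special standard core of $(\EE',\FF')$; it then remains only to see $\ff\in\mathbf{S}^\e(\mathbb{R})$. By Lemma~\ref{LM51} this is exactly $\CC_\ff\subset\FF$, and indeed $\CC_\ff\subset\FF'$ (as $\CC_\ff$ is a core of $(\EE',\FF')$) and $\FF'\subset\FF$ (as $(\EE',\FF')$ is a D-subspace), so $\CC_\ff\subset\FF$, whence $\ff\in\mathbf{S}^\e(\mathbb{R})$.

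For the sufficiency, assume $\ff\in\mathbf{S}^\e(\mathbb{R})$ and that $\CC_\ff$ is a special standard core of $(\EE',\FF')$. By Lemma~\ref{LM51}, $\CC_\ff\subset\FF$, so the $\EE_1$-closure of $\CC_\ff$ inside $\FF$ is a D-subspace $(\mathfrak{E},\mathfrak{F})$ of $(\EE,\FF)$, and Theorem~\ref{THM56} describes it: its effective intervals are $\{(\bar{\mathtt{I}}_k,\bar{\s}_k)\}$, with scale measure $\bar{\lambda}$ equal to the $\lambda^\e_\ff$-absolutely continuous part of $\lambda_\ss$. The goal is $(\mathfrak{E},\mathfrak{F})=(\EE',\FF')$; as $\CC_\ff$ is dense in $\mathfrak{F}$ for $\EE_1$ and dense in $\FF'$ for $\EE'_1$, it suffices to show $\EE=\EE'$ on $\CC_\ff$. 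I would obtain this by computing both sides through the scale measures: on the one hand, as in the proofs of Lemma~\ref{LM51} and Theorem~\ref{THM56}, $\EE(\varphi\circ\ff,\varphi\circ\ff)=\tfrac12\int_{\mathbb{R}}(\varphi'\circ\ff)^2\,(d\lambda^\e_\ff/d\lambda_\ss)\,d\lambda^\e_\ff$; on the other hand, applying Theorem~\ref{THM56} with $(\EE',\FF')$ in the role of $(\EE,\FF)$ and using that $\CC_\ff$ is already dense in $\FF'$, the D-subspace of $(\EE',\FF')$ generated by $\CC_\ff$ is $(\EE',\FF')$ itself, so Corollary~\ref{COR57} forces the scale measure $\lambda'$ of $(\EE',\FF')$ to be equivalent to $\lambda_\ff$ on the effective intervals of $(\EE',\FF')$, those intervals being $\ff$-scale-isolated, and gives the analogous formula for $\EE'$. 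Matching the two formulas yields $\bar{\lambda}=\lambda'$; with the scale measures identified, checking that the $\ff$-scale-connection classes of $\{(I_n,\ss_n)\}$ relative to $\bar{\lambda}$ coincide with the effective intervals of $(\EE',\FF')$ gives $(\mathfrak{E},\mathfrak{F})=(\EE',\FF')$, a D-subspace of $(\EE,\FF)$.

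For the last assertion, let $\ff_1,\ff_2\in\mathbf{S}^\e(\mathbb{R})$ with $d\ff_1\simeq d\ff_2$. Then $\lambda^\e_{\ff_1}\simeq\lambda^\e_{\ff_2}$ and $\lambda^\t_{\ff_1}\simeq\lambda^\t_{\ff_2}$, so the Lebesgue decompositions of $\lambda_\ss$ with respect to $\lambda^\e_{\ff_1}$ and with respect to $\lambda^\e_{\ff_2}$ share the same absolutely continuous part $\bar{\lambda}$, and the relations ``$\ff_1$-scale-connected'' and ``$\ff_2$-scale-connected'' of Definition~\ref{DEF53} coincide, since both conditions there, $\lambda^\t_\ff([e_i,e_j])=0$ and $\bar{\lambda}([e_i,e_j])<\infty$, are insensitive to replacing a measure by an equivalent one. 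By Theorem~\ref{THM56}, $\CC_{\ff_1}$ and $\CC_{\ff_2}$ then produce the same effective intervals, hence generate the same D-subspace.

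The step I expect to be the real obstacle is the identity $\EE=\EE'$ on $\CC_\ff$ inside the sufficiency. A priori the effective intervals of $(\EE',\FF')$ are unrelated to those of $(\EE,\FF)$, and one must use both hypotheses simultaneously: that $\CC_\ff$ is a \emph{core} of $(\EE',\FF')$ --- so that, by \cite[Theorem~3.2]{LY17} applied on each irreducible part, the scale measure of $(\EE',\FF')$ is equivalent to $\lambda_\ff$ there and no proper $\ff$-scale-connection occurs --- together with $\ff\in\mathbf{S}^\e(\mathbb{R})$, which ties $\lambda_\ff$ back to $\lambda_\ss$ with the $0$-or-$1$ density built into the construction of $\bar{\lambda}$. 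Everything else (the necessity, and the last assertion) is routine once this is in hand.
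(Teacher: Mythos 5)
Your necessity argument and your treatment of the last assertion are correct and essentially what the paper intends: Theorem~\ref{T58SSC} applied to $(\EE',\FF')$ produces $\ff$, the chain $\CC_\ff\subset\FF'\subset\FF$ together with Lemma~\ref{LM51} places $\ff$ in $\mathbf{S}^\e(\mathbb{R})$, and for $d\ff_1\simeq d\ff_2$ both the absolutely continuous part $\bar\lambda$ in \eqref{EQ5LSR} and the $\ff$-scale-connection classes of Definition~\ref{DEF53} are unchanged, so Theorem~\ref{THM56} yields the same effective intervals.

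The sufficiency, however, is where the proposal breaks down, and the step you yourself flag as the real obstacle cannot be repaired under your reading of the hypothesis. If $(\EE',\FF')$ is allowed to be an arbitrary regular, strongly local Dirichlet form on $L^2(\mathbb{R},m)$ admitting $\CC_\ff$ as a special standard core, the conclusion is false: take $(\EE,\FF)$ to be $1$-dim Brownian motion, $\ff(x)=x$, and let $(\EE',\FF')$ have the single effective interval $\mathbb{R}$ with scale function $\mathtt{s}'(x)=x/2$. Then $\FF'=H^1(\mathbb{R})$ and $\EE'=2\EE$ on $\FF'$, so $\CC_\ff=C_c^\infty(\mathbb{R})$ is a special standard core of $(\EE',\FF')$, yet $(\EE',\FF')$ is not a D-subspace of $(\EE,\FF)$. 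Concretely, your step ``matching the two formulas yields $\bar\lambda=\lambda'$'' fails in this example: Theorem~\ref{THM56} and Corollary~\ref{COR57} applied to $(\EE',\FF')$ only force its scale measure $\lambda'$ to be \emph{equivalent} to $\lambda_\ff$ on its effective intervals, not equal to $\bar\lambda$, and the two energy expressions for $\varphi\circ\ff$ genuinely differ (here by a factor of $2$), so the identity $\EE=\EE'$ on $\CC_\ff$ is not derivable --- it is precisely the content of the hypothesis under the reading the paper intends. That intended reading, made explicit by the word ``generate'' in the last sentence, is that $(\EE',\FF')$ is the D-subspace \emph{generated} by $\CC_\ff$, i.e.\ the $\EE_1$-closure of $\CC_\ff$ in $\FF$ equipped with the restriction of $\EE$; with that reading the sufficiency is immediate from Lemma~\ref{LM51} and requires none of your computation, while for the necessity one only needs to add the (easy) observation that $\FF'$, being $\EE_1$-complete, coincides with the $\EE_1$-closure of its core $\CC_\ff$ taken inside $\FF$.
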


The last assertion follows from Corollary \ref{COR57}.

\section{Further remarks}\label{SEC6}

In this section, we shall give several remarks for all the results above. The first remark concerns the state space of 1-dim symmetric diffusions. As mentioned in Remark~\ref{RM22}, all the results can be extended to the 1-dim symmetric diffusions on an interval (not only on $\mathbb{R}$). Furthermore, we shall explain that the presence of killing inside could not affect the discussions about D-subspaces or D-extensions.

\subsection{Diffusions on an interval}

When the state space $\mathbb{R}$ is replaced by an interval $I=\langle l,r \rangle$, the characterization of Dirichlet form associated with an $m$-symmetric diffusion on $I$ is presented in \cite[Theorem~2.1]{LY17}, where $m$ is a fully supported Radon measure on $I$. It is also composed of at most countable disjoint effective intervals $\{I_n \subset I: n\geq 1\}$ and an adapted scale function $\bs_n$ on $I_n$, but now the adapted condition between $I_n$ and $\bs_n$ is a little different. The changes focus on the boundary of $I_n:=\langle a_n, b_n \rangle$, and we say $\bs_n$ is adapted to $I_n$ if
\begin{itemize}
\item[\textbf{(A)}] When $a_n>l$ or $a_n=l\in I$, $a_n\in I_n$ if and only if $\bs_n(a_n)>-\infty$;
\item[\textbf{(B)}] When $b_n<r$ or $b_n=r\in I$, $b_n\in I_n$ if and only if $\bs_n(b_n)<\infty$.
\end{itemize}
Moreover, the Dirichlet form of a diffusion on $I_n$ with the scale function $\bs_n$ is given by \eqref{EQ2FSU2} with replacing $(\text{L}_R)$ and $(\text{R}_R)$ by
\begin{itemize}
\item[(L)] $a_n=l\notin I, \bs_n(a_n)>-\infty$ and $m(a_n+)<\infty$;
\item[(R)] $b_n=r\notin I, \bs_n(b_n)<\infty$ and $m(b_n-)<\infty$.
\end{itemize}
Note that $m(a_n+)<\infty$ (resp. $m(b_n-)<\infty$) means for any $\epsilon >0$,
\[
	m\left((a_n, a_n+\epsilon) \right)<\infty,\quad  (\text{resp. } m\left((b_n-\epsilon, b_n) \right)<\infty).
\]
Loosely speaking, the closed endpoint of $I_n$ must be a reflecting boundary, and the open endpoint of $I_n$, except for $a_n=l\notin I$ or $b_n=r\notin I$, is an unapproachable boundary. Only when $a_n=l\notin I$ or $b_n=r\notin I$, i.e. $a_n$ or $b_n$ shares the same open endpoint of the state space $I$, the boundary $a_n$ or $b_n$ is possibly absorbing. Notice that this is essentially the same as the case $I=\mathbb{R}$, since $a_n=l\notin I$ or $b_n=r\notin I$ corresponds to $a_n=-\infty$ or $b_n=\infty$ when $I=\mathbb{R}$.

All the results in the previous sections can be extended to the cases on the state space $I$. Nothing else need to be changed except for the adapted condition. This conclusion may be easily deduced by mimicking the previous discussions.
For example, the extended result of Theorem~\ref{thm2.1} is given as follows. Note that the effective interval $(I_n, \bs_n)$ enjoys the new adapted condition \textbf{(A)} and \textbf{(B)},
and now the scale measure is a $\sigma$-finite measure on $I$.

\begin{theorem}\label{THM61}
Let $(\mathscr{E},\mathscr{F})$ and $(\mathfrak{E},\mathfrak{F})$ be two regular and strongly local Dirichlet forms on $L^2(I, m)$, whose effective intervals are  $\{(I_n, \bs_n): n\geq 1\}$ and $\{(\mathtt{I}_k, \mathfrak{s}_k): k\geq 1\}$ respectively. Further let $\lambda_{\bs}$ and $\lambda_\mathfrak{s}$ be their scale measures respectively. Then $(\mathfrak{E},\mathfrak{F})$ is a D-subspace of $(\mathscr{E},\mathscr{F})$ on $L^2(I, m)$ if and only if the following conditions hold:
\begin{itemize}
\item[(1)] $\{\mathtt{I}_k: k\geq 1\}$ is coarser than $\{I_n: n\geq 1\}$ in the sense that for any $n$, $I_n\subset \mathtt{I}_k$ for some $k$.
\item[(2)] $\lambda_\mathfrak{s}\ll \lambda_\bs$ and
\[
	\frac{d\lambda_\mathfrak{s}}{d\lambda_\bs}=0\text{ or }1,\quad \lambda_\bs\text{-a.e. on }I.
\]
\end{itemize}
\end{theorem}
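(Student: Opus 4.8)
The plan is to re-run the proof of Theorem~\ref{thm2.1} almost verbatim, keeping careful track of the handful of places where the state space being an interval $I=\langle l,r\rangle$ (rather than $\mathbb{R}$) and the modified adapted conditions \textbf{(A)}, \textbf{(B)}, together with the boundary conditions (L), (R) replacing $(\text{L}_R),(\text{R}_R)$ in \eqref{EQ2FSU2}, actually intervene. For the sufficiency I would change nothing at all: that half of the argument uses only $\lambda_\mathfrak{s}\ll\lambda_\bs$ with Radon--Nikodym derivative in $\{0,1\}$, the coarsening of the two interval families, and the change-of-variables identity \eqref{EQ3IND}, none of which refers to the boundary of the state space. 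One simply repeats it with $\mathbb{R}$ replaced by $I$ and with the $\sigma$-finiteness of the scale measure now taken on $I$.

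For the necessity, the first two steps transfer directly. The assertion that no endpoint of any $\mathtt{I}_k$ lies in the interior of some $I_n$ rests on the irreducibility versus non-irreducibility dichotomy for the part Dirichlet forms on the open interval $\mathring{I}_n$ via \cite[Proposition~2.3~(3)]{LY16}; this is a statement internal to $\mathring{I}_n$ and is insensitive to $I$. Hence again $\mathring{I}_n\subset\mathtt{I}_k$ for some $k$ and \eqref{EQ3SKS} holds by \cite{FFY05}. The one place needing genuine attention is the upgrade from $\mathring{I}_n\subset\mathtt{I}_k$ to $I_n\subset\mathtt{I}_k$. Here I would argue by contradiction as before: if, say, $a_n=\mathfrak{a}_k$ with $a_n\in I_n\setminus\mathtt{I}_k$, then necessarily $a_n>l$ or $a_n=l\in I$ (since $a_n\in I_n\subset I$ and $a_n$ is a left endpoint), so condition \textbf{(A)} for $(I_n,\bs_n)$ forces $\bs_n(a_n)>-\infty$, while condition \textbf{(A)} for $(\mathtt{I}_k,\mathfrak{s}_k)$ forces $\mathfrak{s}_k(\mathfrak{a}_k)=-\infty$; these two facts are incompatible with \eqref{EQ3SKS}, exactly as in the $\mathbb{R}$-case. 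The only genuinely new possibility, $a_n=l\notin I$, cannot occur as a violation at all: then $a_n\notin I_n$ since $I_n\subset I$, so the left end poses no obstruction to $I_n\subset\mathtt{I}_k$; the right end is handled symmetrically via \textbf{(B)}. Finally, the second condition, namely $d\mathfrak{s}_k\bigl(\mathtt{I}_k\setminus\bigcup_{n\geq 1}I_n\bigr)=0$, follows exactly as in \eqref{EQ3DSK} from the energy equality $\EE(u,u)=\mathfrak{E}(u,u)$ applied to $u\in\mathfrak{F}$ supported in $\mathtt{I}_k$.

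The main, and rather mild, obstacle I expect is precisely the bookkeeping at the open endpoints $l,r$ of $I$: there \textbf{(A)}, \textbf{(B)} differ from $(\textbf{A}_R)$, $(\textbf{B}_R)$, and the possibly absorbing conditions (L), (R) differ from $(\text{L}_R)$, $(\text{R}_R)$, so one must check that these endpoints never force the two interval families apart. As the discussion above shows, they do not, precisely because neither $I_n$ nor $\mathtt{I}_k$ can contain $l$ (resp. $r$) in that configuration; away from $l,r$ the conditions \textbf{(A)}, \textbf{(B)} reduce to the old $(\textbf{A}_R)$, $(\textbf{B}_R)$ and the original argument applies word for word. (Alternatively one could transport the whole situation to $\mathbb{R}$ by an increasing homeomorphism, but the boundary conditions would then have to be re-interpreted anyway, so the direct mimicking of the proof of Theorem~\ref{thm2.1} is the cleaner route.)
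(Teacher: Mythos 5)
Your proposal is correct and follows exactly the route the paper intends: the paper gives no separate proof of Theorem~\ref{THM61}, stating only that it follows by mimicking the proof of Theorem~\ref{thm2.1} with the adapted conditions \textbf{(A)}, \textbf{(B)} in place of $(\textbf{A}_R)$, $(\textbf{B}_R)$. You carry out that mimicking and correctly isolate the only point needing care, namely the upgrade from $\mathring{I}_n\subset\mathtt{I}_k$ to $I_n\subset\mathtt{I}_k$ at a finite endpoint coinciding with an open endpoint of $I$.
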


Next, we highlight that the state space plays an essential role in the definition of D-subspaces or D-extensions. Recall that in this definition, two Dirichlet forms are imposed to be on the same state space with the same symmetric measure. The following example based on Bessel processes shows us an intuitive illustration.

\begin{example}
Roughly speaking, the $d$-Bessel process $(X_t)_{t\geq 0}$ with an integer $d\geq 2$ is an equivalent version of $(|B_t|)_{t\geq 0}$, where $(B_t)_{t\geq 0}$ is a $d$-dimensional Brownian motion. It is usually treated as a diffusion on $[0,\infty)$, but $0$ is a special boundary: Once leaving $0$, $(X_t)_{t\geq 0}$ will never come back.
As stated in \cite[Example~2.12]{LY17}, $(X_t)_{t\geq 0}$ is symmetric with respect to $m(dx):=x^{d-1}dx$ and its associated Dirichlet form $(\EE,\FF)$ on $L^2([0,\infty), m)$ is regular. Moreover, $\{0\}$ is an $\EE$-exceptional set, and $(\EE,\FF)$ has only one effective interval $I_1:=(0,\infty)$ with the scale function $\bs_1$ on $I_1$:
\[
\bs_1(x):=\left\lbrace \begin{aligned}
 &\log x,\quad d=2; \\
 &\frac{x^{2-d}-1}{2-d},\quad d\geq 3.  	
 \end{aligned}
  \right.
\]
Note that $\bs_1(0)=-\infty$.
On the other hand, $(\EE,\FF)$ can be also treated as a regular Dirichlet form on $L^2((0,\infty), m)$, and any set of singleton is of positive $\EE$-capacity.

Let us consider the D-subspaces of $(\EE,\FF)$ on $L^2((0,\infty),m)$ and $L^2([0,\infty),m)$ respectively. For the case on $L^2((0,\infty),m)$, it follows from Theorem~\ref{THM61} that every D-subspace $(\mathfrak{E}, \mathfrak{F})$ has only one effective interval $(0,\infty)$. Moreover, all the D-subspaces are characterized by the following class of scale functions
\[
\mathfrak{S}:=\left\{\mathfrak{s}_1\in \mathbf{S}((0,\infty)): d\mathfrak{s}_1\ll d\bs_1, \frac{d\mathfrak{s}_1}{d\bs_1}=0 \text{ or }1,\ d\bs_1\text{-a.e.} \right\}.
\]
By Lemma~\ref{resc}, $\mathfrak{S}$ admits an element $\mathfrak{s}_1$ such that $\mathfrak{s}_1(0)>-\infty$, whose associated diffusion is absorbing at $0$.

For the case on $L^2([0,\infty),m)$, a D-subspace also has only one effective interval $\mathtt{I}_1$ but admits two possibilities: $\mathtt{I}_1=(0,\infty)$ or $[0,\infty)$. If $\mathtt{I}_1=(0,\infty)$, $(\mathfrak{E}, \mathfrak{F})$ is characterized by a scale function in the subclass of $\mathfrak{S}$:
\[
\mathfrak{S}_\infty:=\{\mathfrak{s}_1\in \mathfrak{S}: \mathfrak{s}_1(0)=-\infty\}.
\]
Clearly, $\{0\}$ is always an $\mathfrak{E}$-exceptional set in this case.
If $\mathtt{I}_1=[0,\infty)$, $(\mathfrak{E},\mathfrak{F})$ is characterized by a scale function in another class:
\[
	\mathfrak{S}_\mathrm{finite}:=\{\mathfrak{s}_1\in \mathfrak{S}: \mathfrak{s}_1(0)>-\infty\}.
\]
Notice that $\mathfrak{S}=\mathfrak{S}_\infty \cup \mathfrak{S}_\mathrm{finite}$. It is worth noting that in this case, $\mathfrak{s}_1\in\mathfrak{S}_\mathrm{finite}$ corresponds to a D-subspace, whose associated diffusion is reflecting at $0$.
\end{example}

\subsection{Killing inside}

We always impose the Dirichlet forms to have no killing inside in previous sections, since killing insides are not essential for the discussions of D-subspaces or D-extensions. Now we shall briefly explain this imposition and present the results with killing insides.

By \cite[Theorem~4.1]{LY17}, a regular and local Dirichlet form $(\EE,\FF)$ on $L^2(I,m)$ is characterized by a class of effective intervals $\{(I_n, \bs_n): n\geq 1\}$ and a killing measure $k$, which is Radon on $I$ and such that $k\ll m$ on $I\setminus \cup_{n\geq 1}I_n$. Let $(\EE^0,\FF^0)$ be given by the same effective intervals but with no killing inside. Due to \cite[Theorem~4.1]{LY17}, we know that $(\EE^0,\FF^0)$ is the resurrected Dirichlet form of $(\EE,\FF)$, and $(\EE,\FF)$ is the perturbed Dirichlet form of $(\EE^0,\FF^0)$ induced by $k$. On the other hand, it follows from \cite[Theorem~2.1]{LY15} that any D-subspace or D-extension of $(\EE,\FF)$ has the same killing measure as $(\EE,\FF)$. Therefore, we can obtain the extended result of Theorem~\ref{THM61} as follows.

\begin{theorem}\label{THM63}
Let $(\EE,\FF)$ be a regular and local Dirichlet form on $L^2(I,m)$ with effective intervals $\{(I_n, \bs_n): n\geq 1\}$ and killing measure $k$. Further let $(\mathfrak{E}, \mathfrak{F})$ be another regular and local Dirichlet form on $L^2(I,m)$ with effective intervals $\{(\mathtt{I}_k, \mathfrak{s}_k): k\geq 1\}$ and killing measure $\mathfrak{k}$. Then $(\mathfrak{E},\mathfrak{F})$ is a D-subspace of $(\mathscr{E},\mathscr{F})$ on $L^2(I, m)$ if and only if the following conditions hold:
\begin{itemize}
\item[(1)] $\{\mathtt{I}_k: k\geq 1\}$ is coarser than $\{I_n: n\geq 1\}$ in the sense that for any $n$, $I_n\subset \mathtt{I}_k$ for some $k$.
\item[(2)] $\lambda_\mathfrak{s}\ll \lambda_\bs$ and
\[
	\frac{d\lambda_\mathfrak{s}}{d\lambda_\bs}=0\text{ or }1,\quad \lambda_\bs\text{-a.e. on }I.
\]
\item[(3)] $\mathfrak{k}=k$.
\end{itemize}
\end{theorem}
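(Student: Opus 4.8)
The plan is to deduce Theorem~\ref{THM63} from Theorem~\ref{THM61} via the resurrection and killing transforms. Write $(\EE^0,\FF^0)$ for the Dirichlet form with the same effective intervals $\{(I_n,\bs_n):n\geq 1\}$ as $(\EE,\FF)$ but no killing inside, and likewise $(\mathfrak{E}^0,\mathfrak{F}^0)$ for the strongly local form with effective intervals $\{(\mathtt{I}_k,\mathfrak{s}_k):k\geq 1\}$. By \cite[Theorem~4.1]{LY17}, $(\EE^0,\FF^0)$ is exactly the resurrected Dirichlet form of $(\EE,\FF)$, i.e.\ $(\EE,\FF)$ is the perturbation of $(\EE^0,\FF^0)$ by the killing measure $k$; in particular $\FF$ is $\EE^0_1$-dense in $\FF^0$ and
\[
\EE^0(u,v)=\EE(u,v)-\int_{\mathbb{R}}\tilde u\,\tilde v\,dk,\qquad u,v\in\FF .
\]
The analogous statements hold for $(\mathfrak{E},\mathfrak{F})$, $(\mathfrak{E}^0,\mathfrak{F}^0)$ and $\mathfrak{k}$.

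For the necessity, suppose $(\mathfrak{E},\mathfrak{F})$ is a D-subspace of $(\EE,\FF)$. By \cite[Theorem~2.1]{LY15} the two killing measures agree, which is condition (3). Next I would verify that $(\mathfrak{E}^0,\mathfrak{F}^0)$ is a D-subspace of $(\EE^0,\FF^0)$: for $u\in\mathfrak{F}\subset\FF$ one has, using $\mathfrak{k}=k$ and $\mathfrak{E}=\EE$ on $\mathfrak{F}$,
\[
\mathfrak{E}^0(u,u)=\mathfrak{E}(u,u)-\int_{\mathbb{R}}\tilde u^2\,d\mathfrak{k}=\EE(u,u)-\int_{\mathbb{R}}\tilde u^2\,dk=\EE^0(u,u),
\]
so $\mathfrak{E}^0_1$ and $\EE^0_1$ coincide on $\mathfrak{F}$; since $\mathfrak{F}$ is $\mathfrak{E}^0_1$-dense in $\mathfrak{F}^0$, a Cauchy-sequence argument (an $\mathfrak{E}^0_1$-Cauchy sequence in $\mathfrak{F}$ is $\EE^0_1$-Cauchy in $\FF^0$ with the same $L^2(m)$-limit) gives $\mathfrak{F}^0\subset\FF^0$ and $\mathfrak{E}^0=\EE^0$ on $\mathfrak{F}^0$. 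Now $(\EE^0,\FF^0)$ and $(\mathfrak{E}^0,\mathfrak{F}^0)$ are regular strongly local forms with effective intervals $\{(I_n,\bs_n)\}$ and $\{(\mathtt{I}_k,\mathfrak{s}_k)\}$, so Theorem~\ref{THM61} yields conditions (1) and (2).

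For the sufficiency, assume (1)--(3). Conditions (1) and (2) make $(\mathfrak{E}^0,\mathfrak{F}^0)$ a D-subspace of $(\EE^0,\FF^0)$ by Theorem~\ref{THM61}. Since $\bigcup_n I_n\subset\bigcup_k\mathtt{I}_k$ by (1), the assumption $k\ll m$ on $(\bigcup_n I_n)^c$ forces $\mathfrak{k}=k\ll m$ on $(\bigcup_k\mathtt{I}_k)^c$, so $\mathfrak{k}$ is an admissible killing measure for $\{(\mathtt{I}_k,\mathfrak{s}_k)\}$ and, by the uniqueness in \cite[Theorem~4.1]{LY17}, the perturbation of $(\mathfrak{E}^0,\mathfrak{F}^0)$ by $\mathfrak{k}$ is precisely $(\mathfrak{E},\mathfrak{F})$ (and $(\EE,\FF)$ is the perturbation of $(\EE^0,\FF^0)$ by $k$). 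It remains to note that perturbing two forms standing in the D-subspace relation by one and the same killing measure preserves that relation: for $u\in\mathfrak{F}\subset\FF$,
\[
\mathfrak{E}(u,u)=\mathfrak{E}^0(u,u)+\int_{\mathbb{R}}\tilde u^2\,d\mathfrak{k}=\EE^0(u,u)+\int_{\mathbb{R}}\tilde u^2\,dk=\EE(u,u).
\]

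The main obstacle is the bookkeeping in the resurrection step of the necessity part, that is, making rigorous the passage from $(\mathfrak{E},\mathfrak{F})\subset(\EE,\FF)$ to $(\mathfrak{E}^0,\mathfrak{F}^0)\subset(\EE^0,\FF^0)$: one must be sure that $\mathfrak{F}$ sits inside $\mathfrak{F}^0$ and is $\mathfrak{E}^0_1$-dense there, that the Beurling--Deny killing part of $(\mathfrak{E},\mathfrak{F})$ really equals $\int\tilde u\tilde v\,d\mathfrak{k}$ with the $\mathfrak{k}$ from the representation, and that no capacity or regularity subtlety obstructs the identification. All of this is contained in, or a routine consequence of, the resurrection/killing theory of \cite{FOT11,CF12} as used in \cite[Theorem~4.1]{LY17}, together with \cite[Theorem~2.1]{LY15}; once it is in place, conditions (1) and (2) come for free from Theorem~\ref{THM61} and (3) is immediate.
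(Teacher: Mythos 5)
Your proposal is correct and follows exactly the paper's route: the paper's proof also reduces the statement to Theorem~\ref{THM61} via the resurrected forms $(\EE^0,\FF^0)$ and $(\mathfrak{E}^0,\mathfrak{F}^0)$, invoking \cite[Theorem~4.1]{LY17} and \cite[Theorem~2.1]{LY15}, and simply asserts the equivalence ``$(\mathfrak{E},\mathfrak{F})$ is a D-subspace of $(\EE,\FF)$ iff $(\mathfrak{E}^0,\mathfrak{F}^0)$ is a D-subspace of $(\EE^0,\FF^0)$ and $k=\mathfrak{k}$'' that you verify in detail. Your write-up just makes explicit the density/Cauchy-sequence bookkeeping that the paper leaves as ``we only need to point out.''
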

\begin{proof}
Let $(\mathfrak{E}^0, \mathfrak{F}^0)$ be the resurrected Dirichlet form of $(\mathfrak{E}, \mathfrak{F})$. In other words, $(\mathfrak{E}^0,\mathfrak{F}^0)$ is a Dirichlet form with the effective intervals $\{(\mathtt{I}_k,\mathfrak{s}_k): k\geq 1\}$. Then we only need to point out that $(\mathfrak{E}, \mathfrak{F})$ is a D-subspace of $(\EE,\FF)$ if and only if $(\mathfrak{E}^0,\mathfrak{F}^0)$ is a D-subspace of $(\EE^0,\FF^0)$ and $k=\mathfrak{k}$. That completes the proof.
\end{proof}

\begin{remark}
That $\mathfrak{k}$ is the killing measure of $(\mathfrak{E},\mathfrak{F})$ amounts to $\mathfrak{k}\ll m$ on $I\setminus \cup_{k\geq 1}\mathtt{I}_k$. The third condition in Theorem~\ref{THM63} indicates that when $(\mathfrak{E},\mathfrak{F})$ is a D-subspace of $(\EE,\FF)$, it has more imposition: $\mathfrak{k}\ll m$ on $I\setminus \cup_{n\geq 1}I_n$ (Note that $\cup_{n\geq 1}I_n\subset \cup_{k\geq 1}\mathtt{I}_k$).
\end{remark}

Roughly speaking, to study a problem about D-subspaces (or D-extensions) of a Dirichlet form with killing insides, we could first consider the analogical problem of the resurrected Dirichlet form and then come back to it by killing transform. Particularly, nothing else need to be changed to derive the analogical results of previous sections for Dirichlet forms with killing insides, except for an additional condition: $\mathfrak{k}=k$.




\end{document}